\let\le\leqslant
\let\ge\geqslant
\let\emptyset\varnothing
\let\epsilon\varepsilon
\let\phi\varphi
\renewcommand{\d}{\mathrm{d}}
\renewcommand{\Im}{\mathop{\mathrm{Im}}\nolimits}
\renewcommand{\Re}{\mathop{\mathrm{Re}}\nolimits}
\newcommand{\bigmid}{\,\middle\vert\,}
\newcommand{\const}{\mathrm{const}}
\DeclareMathOperator{\Area}{Area}
\DeclareMathOperator{\am}{am}
\DeclareMathOperator{\cn}{cn}
\DeclareMathOperator{\Ec}{Ec}
\DeclareMathOperator{\Ind}{Ind}
\DeclareMathOperator{\Nul}{Nul}
\DeclareMathOperator{\sgn}{sgn}
\DeclareMathOperator{\sn}{sn}
\DeclareMathOperator{\tr}{tr}
\theoremstyle{plain}
\newtheorem{claim}{Claim}[section]
\newtheorem{conjecture}{Conjecture}[section]
\newtheorem{proposition}{Proposition}[section]
\newtheorem{theorem}{Theorem}
\theoremstyle{definition}
\theoremstyle{remark}
\newtheorem{remark}{Remark}[section]
\title[Rotationally symmetric critical metrics on tori in a conformal class]
{Rotationally symmetric critical metrics for Laplace eigenvalues on tori in a
conformal class}
\author{Egor Morozov}
\address{Universit\'e de Montr\'eal, Facult\'e des arts et des sciences,
D\'epartement de math\'ematiques et de statistique\endgraf
Pavillon Andr\'e-Aisenstadt (AA-5190)\endgraf
2920, chemin de la Tour\endgraf
Montr\'eal (QC)\endgraf
H3T 1J4}
\email{egor.morozov@umontreal.ca}
\begin{document}

\begin{abstract}
We study the problem of maximizing the first Laplace-Beltrami eigenvalue
normalized by area in a conformal class on a torus. By a result of Nadirashvili,
El Soufi, and Ilias, critical metrics for the \(k\)-th normalized
Laplace-Beltrami eigenvalue functional \(\bar\lambda_k\) in a conformal class
correspond to harmonic maps to spheres. In this paper we construct certain
\(\mathbb S^1\)-equivariant harmonic maps \(\mathbb T^2\to\mathbb S^3\). For
each non-rhombic conformal class on a torus, one of these maps corresponds to a
rotationally symmetric critical metric for \(\bar\lambda_1\) in this conformal
class with the value of \(\bar\lambda_1\) being greater than that of the flat
metric. This refines a recent result by Karpukhin that answers a question by El
Soufi, Ilias, and Ros. Also, we are able to show that if a rotationally
invariant metric on a rectangular torus is maximal for \(\bar\lambda_1\) in its
conformal class, then it is \(\mathbb S^1\)-equivariant and coincides (up to a
scalar factor) with the above metric. Finally, we show that a family of minimal
tori in \(\mathbb S^3\) called Otsuki tori fits naturally into our family. This
gives an explicit parametrization of Otsuki tori in terms of elliptic integrals.
\end{abstract}

\maketitle

\section{Introduction}

\subsection{Critical metrics for the normalized Laplace-Beltrami eigenvalues on
surfaces}

Let \((M,g)\) be a closed Riemannian surface and
\[
0=\lambda_0(M,g)<\lambda_1(M,g)\le\lambda_2(M,g)\le\ldots\le\lambda_k(M,g)\le
\ldots\nearrow +\infty
\]
be the Laplace-Beltrami eigenvalues of \((M,g)\). For each \(k\ge 1\) the
\textbf{\(k\)-th normalized eigenvalue functional} is defined by
\[
\bar\lambda_k(M,g):=\lambda_k(M,g)\Area(M,g).
\]
Consider the quantities
\[
\Lambda_k(M)=\sup_{g'\in\mathcal R(M)}\bar\lambda_k(M,g'),\qquad
\Lambda_k(M,[g])=\sup_{g'\in [g]}\bar\lambda_k(M,g'),
\]
where \(\mathcal R(M)\) is the set of all smooth Riemannian metrics on \(M\) and
\([g]=\{e^{2\omega}g\mid\omega\in C^\infty(M)\}\) is the conformal class of
\(g\). A metric \(g\in\mathcal R(M)\) is called \textbf{maximal} (respectively,
\textbf{maximal in the conformal class}) for the functional \(\bar\lambda_k\) if
\(\bar\lambda_k(M,g)=\Lambda_k(M)\) (respectively,
\(\bar\lambda_k(M,g)=\Lambda_k(M,[g])\)). A weaker notion is that of a
\emph{critical} (aka \emph{extremal}) metric introduced by
Nadirashvili~\cite{Nad1996} and refined by El~Soufi and
Ilias~\cite{ElSI2000,ElSI2003,ElSI2008}. A metric \(g\in\mathcal R(M)\) is
called \textbf{critical} (respectively, \textbf{critical in the conformal
class}) for the functional \(\bar\lambda_k\) if for any analytic deformation
\(g_t\subset\mathcal R(M)\) (respectively, \(g_t\subset\mathcal [g]\)), \(t\in
(-\epsilon,\epsilon)\) such that \(g_0=g\) we have
\[
\left.\frac{\d}{\d t}\right|_{t=0+}\bar\lambda_k(M,g_t)\cdot
\left.\frac{\d}{\d t}\right|_{t=0-}\bar\lambda_k(M,g_t)\le 0.
\]
In this paper we are interested in the case when \(M=\mathbb T^2\) is a torus.
For each \(a,b\in\mathbb R,b>0\) let \(g_{a,b}\) denote the flat metric on
\(\mathbb T^2\) induced by the factorization of \(\mathbb R^2\) by the lattice
\(\Gamma_{a,b}:=\mathbb Z(1,0)+\mathbb Z(a,b)\). Recall that the moduli space of
conformal classes on \(\mathbb T^2\) is given by \(\{[g_{a,b}]\mid
(a,b)\in\mathcal M\}\), where
\[
\mathcal M:=\{(a,b)\in\mathbb R^2\mid 0\le a\le 1/2,b>0,a^2+b^2\ge 1\}.
\]
In~\cite{ElSIR1996} the authors show that
\[
\Lambda_1(\mathbb T^2,[g_{a,b}])=\frac{4\pi^2}b,\qquad (a,b)\in\mathcal M,\quad
a^2+b^2=1,
\]
where the supremum is attained only on the flat metric. They also ask if the
flat metric is maximal for the functional \(\bar\lambda_1\) in other conformal
classes. In~2014, Petrides~\cite[Theorem~1]{Pet2014} showed that
\begin{equation}\label{eq:intro-petrides}
\Lambda_1(\mathbb T^2,[g_{a,b}])>8\pi,\qquad (a,b)\in\mathcal M,
\end{equation}
which implies that the flat metric \(g_{a,b}\) cannot be maximal in its
conformal class as soon as \(b\ge\frac{\pi}2\). Finally, recently
Karpukhin~\cite[Theorem~1.1]{Kar2025} showed that the metric \(g_{a,b}\) is not
maximal in its conformal class for all \((a,b)\in\mathcal M\) with
\(a^2+b^2>1\). The proof is based on the second variation formula for
\(\bar\lambda_1\) applied to a certain perturbation of the flat metric. In
particular, the method does not give an explicit example of a critical metric in
the conformal class with a larger value of \(\bar\lambda_1\) than that of the
flat metric. Our first main result is an explicit construction of such a
critical metric.

\begin{theorem}\label{th:intro-flatnotmax}
For any \((a,b)\in\mathcal M\) such that \(a^2+b^2>1\) there exists a
rotationally symmetric critical metric \(g_{a,b}^{1,1,0}=\rho(y)g_{a,b}\) for
\(\bar\lambda_1\) in the conformal class \([g_{a,b}]\) such that
\[
\bar\lambda_1(\mathbb T^2,g_{a,b}^{1,1,0})>
\max\Bigl\{\frac{4\pi^2}b,8\pi\Bigr\}.
\]
\end{theorem}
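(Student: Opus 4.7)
The strategy is to apply the Nadirashvili--El Soufi--Ilias correspondence: a metric $g\in[g_0]$ is critical for $\bar\lambda_k$ in its conformal class if and only if, up to a scalar, $g=|\d\Phi|^2_{g_0}\,g_0$ for a harmonic map $\Phi\colon(M,g_0)\to\mathbb{S}^n$ whose components are eigenfunctions of $\Delta_g$ with eigenvalue $\lambda_k(M,g)$. It therefore suffices, for each $(a,b)\in\mathcal{M}$ with $a^2+b^2>1$, to exhibit an $\mathbb{S}^1$-equivariant harmonic map $\Phi\colon(\mathbb{T}^2,g_{a,b})\to\mathbb{S}^3$ and to show that on the induced metric $g_{a,b}^{1,1,0}=\tfrac12|\d\Phi|^2_{g_{a,b}}g_{a,b}$ the first nonzero Laplace eigenvalue is $2$; the desired inequality will then follow from $\bar\lambda_1(g_{a,b}^{1,1,0})=2\,\Area(g_{a,b}^{1,1,0})=2\,E(\Phi)$ and an energy estimate.

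For the construction of $\Phi$ I would work in flat coordinates on $(\mathbb{T}^2,g_{a,b})$ and use the equivariant ansatz
\begin{equation*}
\Phi(x,y)=\bigl(f(y)\cos(2\pi x),\,f(y)\sin(2\pi x),\,h(y)\cos\alpha(y),\,h(y)\sin\alpha(y)\bigr),
\end{equation*}
with $f^2+h^2\equiv 1$. The tension-field equation for maps into $\mathbb{S}^3$ then reduces to an autonomous second-order ODE in $y$, which I expect to integrate by quadrature in Jacobi elliptic functions. Periodicity $y\mapsto y+b$ together with the twist condition $(x,y)\mapsto(x+a,y+b)$ coming from the second generator of $\Gamma_{a,b}$ forces the elliptic modulus to take a unique admissible value for each $(a,b)$ with $a^2+b^2>1$, yielding the announced one-parameter family of maps. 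The superscript $(1,1,0)$ records the wrapping numbers in the two $\mathbb{S}^1$-factors of $\mathbb{S}^3$ together with the trivial phase shift, and this parametrization should also make transparent the link with the Otsuki tori mentioned in the abstract.

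Given such $\Phi$, the harmonic-map identity $\Delta_{g_{a,b}}\Phi=|\d\Phi|^2_{g_{a,b}}\Phi$ combined with conformal covariance of the Laplacian in dimension $2$ gives $\Delta_{g_{a,b}^{1,1,0}}\Phi=2\Phi$, so $2$ is an eigenvalue and criticality is equivalent to the equality $\lambda_1(g_{a,b}^{1,1,0})=2$. The main analytic step is to verify this equality. For this I would exploit the $\mathbb{S}^1$-symmetry of $g_{a,b}^{1,1,0}$ to decompose the Laplacian along Fourier modes in the $x$-variable, reducing the spectral problem to a family of one-dimensional Sturm--Liouville operators on the circle of length $b$; the explicit elliptic profile of $f$ then permits oscillation-theoretic comparisons ruling out sub-$2$ eigenvalues mode by mode. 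This is the step I expect to be the main obstacle, and it is precisely where the non-rhombic hypothesis $a^2+b^2>1$ enters, since at the rhombic boundary the family degenerates to the flat metric and additional low-frequency modes appear.

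Finally, the strict inequality $\bar\lambda_1(g_{a,b}^{1,1,0})>\max\{4\pi^2/b,\,8\pi\}$ is an explicit energy comparison. The bound $>4\pi^2/b$ comes from the fact that the family degenerates as $a^2+b^2\to 1^+$ to the flat rhombic metric for which $\bar\lambda_1=4\pi^2/b$, combined with strict monotonicity of $E(\Phi)$ in the elliptic modulus on the interior of $\mathcal{M}$. The bound $>8\pi$ requires a sharper estimate of the elliptic integral computing $E(\Phi)$; here I would use the Otsuki parametrization from the later part of the paper, noting that consistency with the strict inequality~\eqref{eq:intro-petrides} of Petrides provides an independent check.
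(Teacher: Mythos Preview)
Your overall strategy is exactly the paper's: build an $\mathbb S^1$-equivariant harmonic map to $\mathbb S^3$, invoke the Nadirashvili--El~Soufi--Ilias correspondence, verify that the spectral index equals~$1$ via a Fourier/Sturm--Liouville decomposition, and then prove the two energy inequalities by monotonicity in the elliptic modulus. The paper also uses the Otsuki computation in the $8\pi$ bound, so your instinct there is on target (though note that Petrides' inequality is only a consistency check, not an ingredient of the proof).

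There is, however, a concrete gap in your ansatz. With
\[
\Phi(x,y)=\bigl(f(y)e^{2\pi i x},\,h(y)e^{i\alpha(y)}\bigr),
\qquad f,h\text{ real},
\]
the $\Gamma_{a,b}$-periodicity condition $\Phi(x+a,y+b)=\Phi(x,y)$ forces $f(y+b)e^{2\pi i a}=f(y)$, which for real $f$ is impossible unless $2a\in\mathbb Z$. Thus your map cannot descend to the torus for a generic non-rectangular $(a,b)\in\mathcal M$. The paper's ansatz carries an \emph{extra} phase on the $x$-dependent factor,
\[
\tilde u(x,y)=\bigl(\cos\phi(y)\,e^{i\theta(y)},\,\sin\phi(y)\,e^{i(2\pi x+\alpha(y))}\bigr),
\]
and it is precisely the quasiperiodicity $\alpha(y+b)=\alpha(y)-2\pi a$ that absorbs the lattice twist. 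This second phase is not cosmetic: the harmonic-map equations give $\theta'=c/\cos^2\phi$ and $\alpha'=d/\sin^2\phi$ with two independent first integrals $c,d$, and the periodicity conditions~\eqref{eq:intro-taub}--\eqref{eq:intro-taur} involve \emph{three} integrals (one for each of $b,p,r+a$) rather than the two your ansatz would produce. Without the extra phase you will not get a well-defined map for $a\notin\{0,\tfrac12\}$, and you will also be unable to interpret the label $(1,1,0)$ correctly: $p$ and $r$ are the quasiperiods of $\theta$ and $\alpha$, while $q$ counts the number of periods of $\phi$ on $[0,b)$, not winding numbers in two circle factors of $\mathbb S^3$.
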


Our second main result concerns maximal metrics in the rectangular conformal
classes.

\begin{theorem}\label{th:intro-rectmax}
Let \(b>1\) and \(g=\rho(x,y)g_{0,b}\), where \(\rho\colon\mathbb R^2\to\mathbb
R\) is a \(\Gamma_{0,b}\)-periodic positive smooth function that does not depend
either on \(x\) or on \(y\). If \(g\) is maximal for the functional
\(\bar\lambda_1\) in the conformal class \([g_{0,b}]\), then
\(g=cg_{0,b}^{1,1,0}\) for some constant \(c>0\).
\end{theorem}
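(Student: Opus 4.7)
The plan is to combine the Nadirashvili--El~Soufi--Ilias characterization of critical metrics (which associates a harmonic eigenmap to a sphere) with the rotational symmetry of $g$ (which should upgrade the eigenmap to an $\mathbb S^1$-equivariant one), and then to match the resulting equivariant eigenmap against the explicit family $g_{0,b}^{m,n,p}$ constructed earlier in the paper.

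Since $g$ is maximal, hence critical, for $\bar\lambda_1$ in $[g_{0,b}]$, there exist first eigenfunctions $\phi_0,\ldots,\phi_N$ of $\Delta_g$ with $\sum\phi_i^2\equiv 1$ that assemble into a harmonic eigenmap $\Phi=(\phi_0,\ldots,\phi_N)\colon\mathbb T^2\to\mathbb S^N$. Assuming without loss of generality that $\rho=\rho(y)$, the rotation $R_\theta\colon(x,y)\mapsto(x+\theta,y)$ is an isometry of $g$, and its pullback acts on the first eigenspace $V_1$, so $V_1$ splits into $\mathbb S^1$-isotypical components $V_1^{(k)}$ indexed by the $x$-Fourier frequency $k\in\mathbb Z_{\ge 0}$. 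An element of $V_1^{(k)}$ has the form $\cos(2\pi k x)f(y)+\sin(2\pi k x)h(y)$, where $f,h$ are $b$-periodic solutions of the Sturm--Liouville problem $-u''+4\pi^2 k^2 u=\lambda_1\rho(y)u$; standard Sturm theory bounds each $V_1^{(k)}$ by a small dimension and forces only finitely many $k$ to contribute to $V_1$, which will be crucial for the reduction below.

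I expect the main obstacle to be the next step: promoting the $\mathbb S^1$-invariance of $g$ to genuine $\mathbb S^1$-equivariance of $\Phi$. The idea is to apply an orthogonal change of coordinates in $\mathbb R^{N+1}$ that diagonalizes the induced $\mathbb S^1$-action on $\mathrm{span}(\phi_0,\ldots,\phi_N)$, so that after relabelling the components of $\Phi$ split into equivariant pairs $(\cos(2\pi k x)f_k(y),\sin(2\pi k x)f_k(y))$ together with $x$-independent components $f_0(y)$. The delicate point is that individual $\phi_i$ need not be mode-pure, and the sum-of-squares normalization $\sum\phi_i^2\equiv 1$ must be preserved throughout the change of basis; carrying this out will require handling the ambiguity coming from the real structure on the isotypical decomposition.

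Once $\Phi$ is equivariant, the identity $\sum_k f_k(y)^2\equiv 1$ combined with the harmonicity of $\Phi$ and the Sturm--Liouville equations for the various $f_k$ reproduces precisely the coupled ODE system used in the construction of $g_{0,b}^{m,n,p}$; a dimension count on the active modes then yields $g=c\,g_{0,b}^{m,n,p}$ for some triple $(m,n,p)$. The final step is to use the explicit formulas for $\bar\lambda_1(g_{0,b}^{m,n,p})$ in terms of elliptic integrals, together with the lower bound $\bar\lambda_1(g)>\max\{4\pi^2/b,8\pi\}$ from Theorem~\ref{th:intro-flatnotmax}, to rule out all triples except $(1,1,0)$ by direct comparison and monotonicity in $(m,n,p)$.
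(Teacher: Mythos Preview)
Your overall strategy---extract a harmonic eigenmap, make it equivariant, invoke the classification, then pin down the parameters---matches the paper's. But two steps have genuine gaps.

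First, ``finitely many $k$ contribute'' is too weak. You need exactly $k\in\{0,1\}$ and $\dim V_1\le 4$, so that the eigenmap lands in $\mathbb S^3$ and Theorem~\ref{th:intro-harm} applies. This is the heart of the argument: by the variational characterization, $\lambda_0(k)$ is strictly increasing in $k$; since $\lambda_0(0)=0$ and (after normalizing $\lambda_1(\mathbb T^2,g)=2$) every positive Laplace eigenvalue is $\ge 2$, one gets $\lambda_0(1)\ge 2$ and hence $\lambda_0(k)>2$ for $k\ge 2$. Combined with the Sturm--Liouville gap $\lambda_0(k)<\lambda_1(k)$, the $\lambda_1$-eigenspace has at most two functions from $k=0$ and two from $k=1$. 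Once this is in hand, the equivariance step you flag as the ``main obstacle'' is immediate: a basis adapted to the $k$-decomposition is already of the shape $(h_1(y),h_2(y),h_0(y)\cos 2\pi x,h_0(y)\sin 2\pi x)$. Without the reduction to $\mathbb S^3$, the ODE system you would obtain is not the one from the construction, and the classification theorem is unavailable.

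Second, your final step is the wrong mechanism. There is no formula for $\bar\lambda_1(\mathbb T^2,g_{0,b}^{p,q,r})$ for general $(p,q,r)$: equation~\eqref{eq:intro-barlval} computes $\bar\lambda_{N(2)}$, which equals $\bar\lambda_1$ only when $N(2)=1$, and no ``monotonicity in $(p,q,r)$'' is established or even meaningful here. The correct argument is that, since the eigenmap is built from $\lambda_1$-eigenfunctions and $\lambda_1=2$, the spectral index is $N(2)=1$. Plugging $N(2)=1$ and $a=0$ into the lower bound~\eqref{eq:intro-N2ineq} forces $r=0$ (otherwise the last term is $\ge 2$) and then $2p-1+\delta_{2p,q}\le 1$ gives $p=1$, $q\ne 2$, whence $q=1$ by $p/q\ge 1/2$. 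The alternative branch $u_{0,b}^{p,r;\phi_0}$ in Theorem~\ref{th:intro-harm} has constant energy density, i.e.\ gives a flat metric, and is ruled out by Theorem~\ref{th:intro-flatnotmax}.
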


\begin{remark}
As seen from Theorem~\ref{th:intro-rectmax}, by a ``rotationally symmetric
metric'' in the conformal class \([g_{a,b}],(a,b)\in\mathcal M\) we mean a
metric of the form \(\rho(bx-ay,y)g_{a,b}\), where \(\rho\) is a smooth positive
function independent of one of its arguments. Note that this is not equivalent
to being invariant under an \(\mathbb S^1\) action by conformal diffeomorphisms.
The metrics with the latter property form a larger class.
\end{remark}

It is reasonable to expect for a maximal metric to have a rotational symmetry.
Since by Theorem~\ref{th:intro-flatnotmax} the metric \(g_{0,b}^{1,1,0}\) is
better than the flat one and is consistent with Petrides
inequality~\eqref{eq:intro-petrides}, we have the following natural conjecture.

\begin{conjecture}\label{con:intro-maxmetric}
For each \((a,b)\in\mathcal M\) such that \(a^2+b^2>1\) the metric
\(g_{a,b}^{1,1,0}\) is maximal for the functional \(\bar\lambda_1\) in the
conformal class \([g_{a,b}]\).
\end{conjecture}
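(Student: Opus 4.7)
The plan is to combine existence theory for maximizers in a conformal class with the rigidity statement of Theorem~\ref{th:intro-rectmax}. The strategy splits naturally into three steps: first, establish that a smooth maximizer $g^*$ of $\bar\lambda_1$ in $[g_{a,b}]$ exists; second, extract structural information for $g^*$ from the fact that it is critical and hence corresponds to a harmonic map into a sphere; third, identify $g^*$ with $g_{a,b}^{1,1,0}$, using Theorem~\ref{th:intro-rectmax} in the rectangular case and an analogous rigidity statement in general.

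For the first step I would invoke the existence results of Petrides and the subsequent refinements by Matthiesen--Petrides: whenever $\Lambda_1(\mathbb T^2,[g_{a,b}])>8\pi$, the supremum is attained by a smooth Riemannian metric. This strict inequality is precisely what Theorem~\ref{th:intro-flatnotmax} guarantees for every $(a,b)\in\mathcal M$ with $a^2+b^2>1$. For the second step I would apply the Nadirashvili--El~Soufi--Ilias characterization: the maximizer $g^*$ corresponds to a harmonic map $\Phi^*\colon(\mathbb T^2,g^*)\to\mathbb S^n$ whose components are first eigenfunctions of the Laplacian on $(\mathbb T^2,g^*)$. Since harmonicity is conformally invariant in two dimensions, $\Phi^*$ is also harmonic from the flat torus $(\mathbb T^2,g_{a,b})$ and therefore admits a Fourier expansion with respect to the $\Gamma_{a,b}$-translation action on $\mathbb T^2$.

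The third step is the main obstacle and the reason the statement is posed as a conjecture. In the rectangular case $a=0$ the goal is to show that $g^*$ depends on only one coordinate; Theorem~\ref{th:intro-rectmax} would then complete the argument. A natural approach is to decompose $\Phi^*$ into Fourier modes in the $x$-direction and to analyse the resulting ODE systems coming from the harmonic-map equation, arguing that only the lowest nontrivial mode can contribute if $g^*$ is a first-eigenvalue maximizer; this would force $g^*$ to be rotationally invariant. For general $(a,b)\in\mathcal M$ one would first need to extend Theorem~\ref{th:intro-rectmax} beyond rectangular classes --- most plausibly by repeating its proof in the appropriate oblique coordinate system $(bx-ay,y)$ --- and then carry out the same Fourier-reduction argument. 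Both components rely on a detailed classification of harmonic tori in $\mathbb S^n$ with prescribed conformal type and first-eigenvalue multiplicity, which is widely open. A potentially more tractable alternative is a continuity and bootstrapping strategy: track the unique family $g_{a,b}^{1,1,0}$ as $(a,b)$ varies in $\mathcal M\cap\{a^2+b^2>1\}$, combine this with upper bounds on $\Lambda_1(\mathbb T^2,[g_{a,b}])$ obtained from the second variation formula in the spirit of Karpukhin's method, and pin any putative competing maximizer to the explicit family by a gap estimate. In either route, ruling out maximizers of an $\mathbb S^1$-non-equivariant type is the decisive and currently open ingredient.
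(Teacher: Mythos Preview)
This statement is a \emph{conjecture}; the paper does not prove it, and you correctly identify the decisive step --- ruling out non-$\mathbb S^1$-equivariant maximizers --- as open. So there is nothing to compare at the level of completed proofs. What can be compared is your proposed strategy versus the paper's own discussion (sec.~\ref{sec:disc}) of why the conjecture is difficult.

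The paper approaches the problem from a different direction. Rather than invoking existence of a maximizer and then classifying the resulting harmonic map, it explains why the standard \emph{Hersch trick} fails: one would want $E(G_\gamma\circ u_{a,b}^{1,1,0})\le E(u_{a,b}^{1,1,0})$ for all $\gamma\in\mathbb B^4$, but a second-variation computation shows this inequality is violated near the boundary $a^2+b^2=1$. The natural fallback --- a four-dimensional family of energy-decreasing variations --- is obstructed because numerically $\Ind_E(u_{a,b}^{1,1,0})=3$ and the Jacobi fields have no useful geometric interpretation. Your route (existence via Petrides plus harmonic-map classification) is a legitimate alternative, and the paper's Theorem~\ref{th:intro-rectmax} is indeed the rectangular endpoint of that program; but the step you flag as open --- forcing rotational symmetry of an arbitrary maximizer, or equivalently classifying harmonic tori in $\mathbb S^n$ with spectral index one --- is genuinely hard and not addressed by the paper either. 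In short: your outline is reasonable and honest about its gaps, but it is a different line of attack from the one the paper analyzes, and neither is presently complete.
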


Cf.~\cite[Conjecture~1]{KM2022} for the case of first normalized Steklov
eigenvalue in the conformal class. See also~\cite{Kang2025} for the
state-of-the-art upper bound on \(\Lambda_1(M,[g_{a,b}])\).

In~\cite{KLO2017}, the authors provide a method to compute numerically the
values of \(\Lambda_1(\mathbb T^2,[g_{a,b}])\) together with the corresponding
conformal factor. According to the paper, the optimal conformal factor is
rotationally symmetric but with a different direction of rotation (namely, the
conformal factors there have the form \(\hat\rho(bx-ay)\) for some positive
function \(\hat\rho\)). However, Professor Osting and Professor Chiu-Yen Kao
confirmed (private communication) that the code used in the paper required a
minor correction, and the true direction of rotation is the same as for the
metric \(g_{a,b}^{1,1,0}\). Also, for few test conformal classes considered
in~\cite{KLO2017}, the value of \(\Lambda_1(\mathbb T^2,[g_{a,b}])\) computed
numerically coincides (up to a numerical error) with \(\bar\lambda_1(\mathbb
T^2,g_{a,b}^{1,1,0})\) computed numerically using~\eqref{eq:intro-barlval}
below. Given all this, we may conclude that Conjecture~\ref{con:intro-maxmetric}
has a solid numerical evidence.

Keeping Conjecture~\ref{con:intro-maxmetric} in mind, it is interesting to study
the dependence of \(\bar\lambda_1(\mathbb T^2,g_{a,b}^{1,1,0})\) on
\((a,b)\in\mathcal M\). We are able to prove

\begin{theorem}\label{th:intro-ab}
For \((a,b)\in\mathcal M\), \(\bar\lambda_1(\mathbb T^2,g_{a,b}^{1,1,0})\) is an
increasing function in \(a\) and a decreasing function in \(b\).
\end{theorem}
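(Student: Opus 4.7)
The plan is to work directly from the closed-form expression \eqref{eq:intro-barlval} for $\bar\lambda_1(\mathbb T^2,g_{a,b}^{1,1,0})$ coming from the $\mathbb S^1$-equivariant construction. Indeed, the equivariance ansatz reduces the harmonic map equation for the underlying map $\mathbb T^2\to\mathbb S^3$ to an ODE in one variable, whose first integral is solved by Jacobi elliptic functions. The full family of critical metrics $g_{a,b}^{1,1,0}$ is therefore naturally parametrized by a pair of continuous parameters---say $(k,h)$, with $k\in(0,1)$ an elliptic modulus and $h$ a second real parameter---and both $(a,b)$ and $\bar\lambda_1$ become explicit functions of $(k,h)$ built from complete elliptic integrals. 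The strategy is to transport the monotonicity claims back to this parametrization.

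Concretely, I would first check that $(k,h)\mapsto(a,b)$ is a diffeomorphism of the admissible parameter region onto $\mathcal M\cap\{a^2+b^2>1\}$ by computing the Jacobian $J=a_kb_h-a_hb_k$ and verifying that it has constant sign there. Second, I would compute $\partial_k\bar\lambda_1$ and $\partial_h\bar\lambda_1$ directly from \eqref{eq:intro-barlval}. Third, by the chain rule,
\[
\partial_a\bar\lambda_1=\frac{b_h\,\partial_k\bar\lambda_1-b_k\,\partial_h\bar\lambda_1}{J},\qquad \partial_b\bar\lambda_1=\frac{-a_h\,\partial_k\bar\lambda_1+a_k\,\partial_h\bar\lambda_1}{J},
\]
so the theorem reduces to determining the sign of the numerators on the right-hand sides (and comparing them with the sign of $J$).

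The main obstacle is this last step: each numerator is a rational expression in complete (and possibly incomplete) elliptic integrals of the first and second kind, and establishing its sign across the entire admissible range is the standard sort of non-trivial elliptic-integral inequality. I would first try to rewrite each numerator as a single integral with a manifestly definite-sign integrand, exploiting the phase-plane picture of the underlying ODE; such rewrites often succeed for monotonicity questions about elliptic quadratures. Failing that, I would fall back on the analytic theory, using the Legendre relation, the monotonicity of $E(k)/K(k)$, and the closed-form derivatives $\d K/\d k$ and $\d E/\d k$, possibly supplemented by boundary analysis of the $(k,h)$-region since the signs must be consistent with the known limiting behaviour on $a^2+b^2=1$ and as $b\to\infty$. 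A potentially more elegant route would be to identify a geometric invariant of the harmonic map---such as a component of its conformal energy or the area it sweeps in $\mathbb S^3$---whose variation in $(k,h)$ is transparently related to that of $\bar\lambda_1$, making the desired monotonicity geometrically evident.
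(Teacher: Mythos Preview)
Your plan is the natural brute-force route, and you correctly identify its weak point: after the change of variables and the chain rule, each numerator is a transcendental combination of complete elliptic integrals whose sign you would still have to pin down over the whole parameter region. You have not actually done that step, only listed fallback strategies, so as written this is a programme rather than a proof. The paper in fact opens the relevant section by remarking that the dependence of $\tau_1,\tau_2,\tau_3$ on $(a,b)$ is ``very complicated and implicit,'' i.e.\ that exactly this approach is unpleasant.

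What the paper does instead is precisely your ``potentially more elegant route,'' and the geometric invariant is the Hopf differential. The key lemma (due to Karpukhin) is general: if $u_{a,b}\colon(\mathbb T^2,[g_{a,b}])\to\mathbb S^N$ is any smooth family of harmonic maps and $\langle\partial_z u_{a,b},\partial_z u_{a,b}\rangle\equiv H_{re}+iH_{im}$ is the (constant) Hopf coefficient, then
\[
\d_{a,b}E_{g_{a,b}}(u_{a,b})=2\bigl(H_{im}\,\d a+H_{re}\,\d b\bigr).
\]
Since $\bar\lambda_1(\mathbb T^2,g_{a,b}^{1,1,0})=2E(u_{a,b}^{1,1,0})$, no implicit differentiation or Jacobian is needed at all: one only has to compute $H_{re}$ and $H_{im}$ for the specific map and check their signs. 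From the explicit form of $u_{a,b}^{1,1,0}$ one reads off $H_{im}=-\pi d$ and $H_{re}=\pi^2-\tfrac{A}{4}$. The sign $d<0$ for $a\in(0,\tfrac12)$ is immediate from the quasiperiodicity relation $\alpha(y+b)=\alpha(y)-2\pi a$ together with $\alpha'=d/\sin^2\phi$, giving monotonicity in $a$. The condition $H_{re}<0$ is $A>4\pi^2$, i.e.\ $\tau_1+\tau_2+\tau_3>2$, which is exactly an inequality already established earlier in the paper (in the course of proving $\bar\lambda_1(\mathbb T^2,g_{a,b}^{1,1,0})>8\pi$). Thus the whole theorem falls out in a few lines once the Hopf-differential lemma is in hand.
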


Recall from~\cite{Nad1996} that the flat metric on the equilateral torus is a
unique global maximizer for \(\bar\lambda_1\). This fact is natural in light of
Conjecture~\ref{con:intro-maxmetric} and Theorem~\ref{th:intro-ab}, since the
conformal class of the equilateral torus is precisely the one with maximal \(a\)
and minimal \(b\).

\smallskip

The metric \(g_{a,b}^{1,1,0}\) is just one metric from a large family of metrics
\(g_{a,b}^{p,q,r}\), where \(p,q,r\) are integers satisfying certain
inequalities. To construct these metrics we utilize a well-known connection
between critical metrics in a conformal class and harmonic maps to spheres.
Recall that for a Riemannian surface \((M,g)\) the \textbf{Weyl's counting
function} is defined by
\[
N(\lambda):=\#\{i\mid\lambda_i(M,g)<\lambda\}.
\]

\begin{theorem}[{\cite[Theorems~3.1(ii) and~4.1(ii)]{ElSI2008}}]
\label{th:intro-NadElSI}
Let \(n\ge 1\), \(M\) be a smooth surface, and \(\mathbb S^n\) be the unit
sphere with the standard round metric \(g_0\).

\begin{enumerate}[label=(\arabic*)]
\item\label{it:intro-globcritnc}
Let \(f\colon M\to\mathbb S^n\) be a minimal map. Then the metric \(f^*g_0\)
is critical for the functional \(\bar\lambda_{N(2)}\) on \(M\).

\item\label{it:intro-confcritnc}
Let \(f\colon (M,g)\to\mathbb S^n\) be a harmonic map. Then the metric
\(\frac{1}2|\d f|_g^2 g\) is critical in the conformal class \([g]\) for the
functional \(\bar\lambda_{N(2)}\) on \(M\).
\end{enumerate}
\end{theorem}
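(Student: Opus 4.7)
The plan is to combine two ingredients: Takahashi's characterization of harmonic maps to spheres in terms of eigenfunction equations, and the variational characterization of critical metrics due to Nadirashvili and El~Soufi-Ilias. The latter says: a metric \(g\) is critical in \([g]\) for \(\bar\lambda_k\) iff there exist \(\lambda_k(M,g)\)-eigenfunctions \(\phi_1,\ldots,\phi_m\) with \(\sum_i|\d\phi_i|_g^2\equiv\const\); and \(g\) is globally critical for \(\bar\lambda_k\) iff there exist such eigenfunctions satisfying the stronger pointwise tensorial identity \(\sum_i\d\phi_i\otimes\d\phi_i=c\,g\) for some constant \(c>0\). Once these criteria are accepted, the proof reduces to a short computation using Takahashi's equation.

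For part~\ref{it:intro-confcritnc}, write \(f=(f_1,\ldots,f_{n+1})\colon (M,g)\to\mathbb S^n\subset\mathbb R^{n+1}\). The harmonicity of \(f\) is equivalent, component by component, to the equation \(\Delta_g f_i=|\d f|_g^2 f_i\). Set \(\tilde g=\frac12|\d f|_g^2\,g\). Since \(M\) is a surface, the Laplacian is conformally covariant: \(\Delta_{e^{2\omega}g}=e^{-2\omega}\Delta_g\). Applying this with \(e^{2\omega}=\tfrac12|\d f|_g^2\) gives \(\Delta_{\tilde g}f_i=2f_i\), so each \(f_i\) is a \(\tilde g\)-eigenfunction with eigenvalue~\(2\). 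By definition of \(N\), exactly \(N(2)\) eigenvalues of \(\Delta_{\tilde g}\) are strictly smaller than \(2\), so \(\lambda_{N(2)}(M,\tilde g)=2\). Finally, the conformal covariance of the energy density in dimension two gives \(\sum_i|\d f_i|_{\tilde g}^2=(2/|\d f|_g^2)\sum_i|\d f_i|_g^2=2\), a constant. Hence the eigenfunctions \(f_1,\ldots,f_{n+1}\) satisfy the criterion for criticality in the conformal class.

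For part~\ref{it:intro-globcritnc}, by ``minimal map'' one means a weakly conformal harmonic map, i.e.\ \(f\) is harmonic and \(f^*g_0\) is the induced metric (up to points where \(\d f\) vanishes). Setting \(g:=f^*g_0\), the identity \(|\d f|_g^2=2\) holds on a surface (trace of the pullback), so \(\frac12|\d f|_g^2\,g=g\) and part~\ref{it:intro-confcritnc} already yields criticality of \(g\) inside \([g]\). To upgrade this to global criticality, we use the extra structure coming from \(f\) being an isometric map (on the set where it is an immersion, extended by continuity): by the very definition of the pullback metric, \(\sum_i\d f_i\otimes\d f_i=f^*g_0=g\) pointwise, which is exactly the stronger tensorial condition of criterion~(b). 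Therefore \(g=f^*g_0\) is critical for \(\bar\lambda_{N(2)}\) on \(M\).

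The only genuinely nontrivial input here is the Nadirashvili--El~Soufi--Ilias variational characterization of critical metrics, whose proof relies on computing one-sided derivatives of \(\bar\lambda_k\) along analytic families \(g_t\), handling the splitting of the eigenvalue branch caused by the multiplicity \(m\) via a min-max argument, and then invoking a Hahn--Banach/convex-hull separation to translate the sign condition on directional derivatives into the existence of eigenfunctions satisfying the displayed identity. This would be the main obstacle if one had to reprove it from scratch; however, since we may quote it as the cited theorem of El~Soufi-Ilias, the remaining argument is the direct calculation above. The upgrade from part~\ref{it:intro-confcritnc} to part~\ref{it:intro-globcritnc} is the observation that the isometric character of a minimal immersion promotes the trace identity \(\sum|\d f_i|^2=\const\) to the full tensor identity \(\sum\d f_i\otimes\d f_i=g\).
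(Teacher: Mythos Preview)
The paper does not prove this theorem at all: it is quoted verbatim from \cite[Theorems~3.1(ii) and~4.1(ii)]{ElSI2008} and used as a black box throughout. So there is no ``paper's own proof'' to compare against.

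That said, your sketch is the standard argument behind the cited result and is essentially correct. The two-step structure (Takahashi's equation \(\Delta_g f=|\d f|_g^2 f\) plus the Nadirashvili--El~Soufi--Ilias characterization of critical metrics via eigenfunctions with \(\sum|\d\phi_i|_g^2=\const\), respectively \(\sum\d\phi_i\otimes\d\phi_i=cg\)) is exactly how El~Soufi and Ilias prove it in~\cite{ElSI2008}. One minor caveat you gloss over: if the energy density \(|\d f|_g^2\) vanishes somewhere, the metric \(\tilde g\) is only a metric with conical singularities, and the statement has to be interpreted accordingly (the cited reference handles this). Apart from that technicality, your argument is complete once the variational characterization is granted.
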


In the setting of part~\ref{it:intro-confcritnc}, the number \(N(2)\) for the
metric \(\frac{1}2|\d f|_g^2 g\) is also called the \textbf{spectral index} of
\(f\) and denoted by \(\mathrm{ind}_S f\) (see~\cite[Definition~2.7]{Kar2021}).
See also~\cite{Pen2013a,Pen2019} for more details on the connection between
critical metrics and minimal and harmonic maps and related topics.

Part~\ref{it:intro-globcritnc} of Theorem~\ref{th:intro-NadElSI} was
successfully used many times to construct critical (and even maximal) metrics
explicitly on torus \cite{Nad1996,Lap2008,Pen2012,Pen2013b,Pen2015,Kar2014},
Klein bottle \cite{Lap2008,Pen2012,JNP2006,ElSGJ2006,CKM2019}, and orientable
surface of genus two \cite{JLNNP2005,NS2019}. In this paper we use
part~\ref{it:intro-confcritnc} of Theorem~\ref{th:intro-NadElSI} to construct
certain critical metrics \emph{in the conformal class} on torus. More precisely,
each metric \(g_{a,b}^{p,q,r}\) comes from a certain explicitly constructed
\(\mathbb S^1\)-invariant harmonic map \(u_{a,b}^{p,q,r}\) from \((\mathbb
T^2,[g_{a,b}])\) to \(\mathbb S^3=\{(z_1,z_2)\in\mathbb C^2\colon
|z_1|^2+|z_2|^2=1\}\), where the \(\mathbb S^1\) actions on \(\mathbb S^3\) and
\((\mathbb T^2,[g_{a,b}])\) are given by
\begin{align}
\alpha\cdot (z_1,z_2)&=(z_1,e^{2\pi i\alpha}z_2),&&(z_1,z_2)\in\mathbb
S^3,\label{eq:intro-actS3}\\
\alpha\cdot (x,y)&=(x+\alpha,y),&&
(x,y)\in\mathbb R^2/\Gamma_{a,b}\cong (\mathbb T^2,[g_{a,b}]),\qquad
\alpha\in\mathbb R/\mathbb Z\cong\mathbb S^1.
\label{eq:intro-actT2}
\end{align}
Note that the action~\eqref{eq:intro-actS3} is the same that is used in the
Hsiang-Lawson construction of \emph{Otsuki tori}, a family of \(\mathbb S^1\)
invariant \emph{minimal} tori in \(\mathbb S^3\) first considered by
Otsuki~\cite{Ots1970}. The initial construction was simplified by Hsiang and
Lawson~\cite{HL1971}. Later, Penskoi~\cite{Pen2013b} and
Karpukhin~\cite{Kar2014} used this family to construct critical metrics for some
of the functionals \(\bar\lambda_k\) on tori. We show in sec.~\ref{sec:otsuki}
that Otsuki tori fit naturally into our family. In particular, this gives an
explicit parametrization for Otsuki tori by functions that are expressed in
terms of elliptic integrals.

Unfortunately, we are not able to compute the spectral index for all the maps
\(u_{a,b}^{p,q,r}\) (in fact, we doubt if an answer in a closed form is possible
in the general case). However, we are able to give a lower bound together with
sufficient conditions for this bound to attain. In particular, we are able to
show that \(\mathrm{inf}_S u_{a,b}^{1,1,0}=1\), which is enough for
Theorem~\ref{th:intro-flatnotmax}.

\subsection{The main technical results}

Proceed to the definition of the maps \(u_{a,b}^{p,q,r}\). We need the following
auxiliary proposition.

\begin{proposition}\label{pr:intro-taus}
Let \(a,b\in\mathbb R,b>0\). Let \(p,q,r\) be a triple of integers such that
\(p,q\) are positive and
\begin{equation}\label{eq:intro-pqrineq}
\frac{p}q\ge\frac{1}2,\qquad
\left|\frac{r+a}q\right|\le\frac{1}2,\qquad
(r+a)^2+b^2>p^2.
\end{equation}
Then there exists a unique triple of real numbers \(\tau_1,\tau_2,\tau_3\)
satisfying
\begin{equation}\label{eq:intro-tausineq}
0\le\tau_1<\tau_2\le 1\le\tau_3,\qquad\tau_2<\tau_3
\end{equation}
and
\begin{gather}
\int_{\tau_1}^{\tau_2}\frac{\d t}{\sqrt{(t-\tau_1)(\tau_2-t)(\tau_3-t)}}=
\frac{2\pi b}q,\label{eq:intro-taub}\\
\int_{\tau_1}^{\tau_2}
\frac{\sqrt{\tau_1\tau_2\tau_3}\,\d t}{t\sqrt{(t-\tau_1)(\tau_2-t)(\tau_3-t)}}=
\frac{2\pi p}q,\label{eq:intro-taup}\\
\int_{\tau_1}^{\tau_2}
\frac{\sqrt{(1-\tau_1)(1-\tau_2)(\tau_3-1)}\,\d t}
{(1-t)\sqrt{(t-\tau_1)(\tau_2-t)(\tau_3-t)}}=\left|\frac{2\pi (r+a)}q\right|.
\label{eq:intro-taur}
\end{gather}
Moreover, \(\tau_1=0\) if and only if \(\frac{p}q=\frac{1}2\), in which case the
integral in~\eqref{eq:intro-taup} is treated as its own limit as \(\tau_1\to
0\). Similarly, \(\tau_2=1\) if and only if
\(\bigl|\frac{r+a}q\bigr|=\frac{1}2\), in which case the integral
in~\eqref{eq:intro-taur} is treated as its own limit as \(\tau_2\to 1\).

The converse is also true: if \(\tau_1,\tau_2,\tau_3\)
satisfy~\eqref{eq:intro-tausineq}, and \(p,q>0,r\) are integers
satisfying~\eqref{eq:intro-taub}--\eqref{eq:intro-taur}, then \(p,q,r\) must
satisfy~\eqref{eq:intro-pqrineq}. 
\end{proposition}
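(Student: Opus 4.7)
The plan is to analyze the map $\Phi\colon D\to\mathbb R^3$, $\Phi(\tau_1,\tau_2,\tau_3)=(B,P,R)$, where $B$, $P$, $R$ denote the left-hand sides of \eqref{eq:intro-taub}, \eqref{eq:intro-taup}, \eqref{eq:intro-taur}, and
\[
D=\{(\tau_1,\tau_2,\tau_3)\mid 0\le\tau_1<\tau_2\le 1\le\tau_3,\ \tau_2<\tau_3\}.
\]
After the rescaling $(B,P,R)=(2\pi b/q,2\pi p/q,2\pi|r+a|/q)$, the conditions \eqref{eq:intro-pqrineq} are equivalent to $(B,P,R)$ lying in
\[
R_\Phi=\{(B,P,R)\mid B>0,\ P\ge\pi,\ 0\le R\le\pi,\ B^2+R^2>P^2\},
\]
so both directions of the proposition reduce to showing that $\Phi$ is a bijection $D\to R_\Phi$. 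To work with standard objects I would apply the substitution $t=\tau_1+(\tau_2-\tau_1)\sin^2\theta$, which expresses $B$ as a constant multiple of $K(k)$, and $P,R$ as normalized complete elliptic integrals of the third kind in the modulus $k=\sqrt{(\tau_2-\tau_1)/(\tau_3-\tau_1)}$; the prefactors in \eqref{eq:intro-taup} and \eqref{eq:intro-taur} are precisely what is needed to force $P\ge\pi$ and $R\le\pi$.

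I would first settle the boundary identifications. A direct computation of the leading behaviour of the integrand of \eqref{eq:intro-taup} as $\tau_1\to 0$ (the prefactor $\sqrt{\tau_1}$ cancelling against the $1/\sqrt{t-\tau_1}$ singularity at the lower end point) yields $P\to\pi$; differentiation under the integral sign then produces $\partial P/\partial\tau_1>0$, so $P\ge\pi$ on $D$ with equality iff $\tau_1=0$. An analogous argument at $\tau_2=1$ gives $R\le\pi$ with equality iff $\tau_2=1$. The strict inequality $B^2+R^2>P^2$ is the most delicate identification; I would prove it by combining the three integrals as periods on the hyperelliptic curve $y^2=t(1-t)(t-\tau_1)(\tau_2-t)(\tau_3-t)$ and regrouping the integrand of $B^2+R^2-P^2$ into a manifestly positive quadratic expression in $1/t$ and $1/(1-t)$, in the spirit of the Hsiang--Lawson closing conditions for $\mathbb S^1$-equivariant immersions in $\mathbb S^3$.

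For the bijection itself I would fix $B$. Since $\partial B/\partial\tau_3<0$ by differentiation under the integral, the level surface $\{B=\const\}\cap D$ is a smooth two-dimensional submanifold smoothly parametrized by $(\tau_1,\tau_2)$. On this slice I would compute the Jacobian of $(\tau_1,\tau_2)\mapsto(P,R)$ and show that it never vanishes, making $\Phi$ an immersion. Combined with properness of $\Phi$ (which follows by checking that approaching any face of $\partial D$ other than $\{\tau_1=0\}$ or $\{\tau_2=1\}$ drives $\Phi$ to $\partial R_\Phi$), and with the contractibility of $D$, this promotes the immersion to a global homeomorphism $D\to R_\Phi$; existence, uniqueness, and the converse in the proposition then follow at once.

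The hardest step will be the sign of the Jacobian of $(\tau_1,\tau_2)\mapsto(P,R)$ along the surface $B=\const$. Differentiating a complete elliptic integral of the third kind with respect to both the characteristic and the modulus produces expressions that are not visibly signed, and reducing them to a monotone form should require the Legendre relation, or equivalently a Riemann bilinear identity on the hyperelliptic curve above. The same circle of identities is the natural tool for proving $B^2+R^2>P^2$, and I expect these two sign analyses to be the technical core of the proof.
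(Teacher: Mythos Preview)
Your framework (show that $(\tau_1,\tau_2,\tau_3)\mapsto(B,P,R)$ is a bijection from $D$ onto $R_\Phi$) is the right goal, but the execution has a gap, and you are missing a structural simplification that makes the whole problem elementary.

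The gap is in your treatment of $B^2+R^2>P^2$. Here $B,P,R$ are complete integrals, so $B^2+R^2-P^2$ is a combination of \emph{products} of integrals; it has no ``integrand'' that could be regrouped into a positive quadratic in $1/t$ and $1/(1-t)$. No single-integral positivity argument of the kind you sketch can apply. Likewise, your Jacobian step is only a plan: you acknowledge that the sign of $\det\partial(P,R)/\partial(\tau_1,\tau_2)$ along $\{B=\mathrm{const}\}$ is ``the hardest step'' and propose Legendre / Riemann bilinear identities, but do not indicate how they would actually produce the sign. As written this is not a proof.

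The key point you are missing is that the change of variables
\[
m=\frac{\tau_2-\tau_1}{\tau_3-\tau_1},\qquad
n_0=-\frac{\tau_2-\tau_1}{\tau_1},\qquad
n_1=\frac{\tau_2-\tau_1}{1-\tau_1}
\]
\emph{decouples} the system: after the standard reduction to complete elliptic integrals, one finds $P=\Phi(n_0\mid m)$ and $R=\Phi(n_1\mid m)$ with
\[
\Phi(n\mid m)=\sqrt{\tfrac{(1-n)(n-m)}{n}}\;\Pi(n\mid m),
\]
so $P$ does not see $n_1$ and $R$ does not see $n_0$. A one-variable computation shows that for each fixed $m\in(0,1)$, $\Phi(\cdot\mid m)$ increases from $\pi/2$ to $+\infty$ on $(-\infty,0)$ and from $0$ to $\pi/2$ on $(m,1)$; this gives, for each $m$, a unique $(n_0,n_1)$ solving the $P$- and $R$-equations, and simultaneously yields the boundary characterizations $\tau_1=0\Leftrightarrow P=\pi$ and $\tau_2=1\Leftrightarrow R=\pi$. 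Substituting these implicit functions $n_0(m),n_1(m)$ into the $B$-equation produces a single function $\Psi(m)=\bigl(\tfrac{1}{n_1}-\tfrac{1}{n_0}\bigr)mK(m)^2$ which one checks, by a short derivative computation using only $E<K<E/(1-m)$, is strictly increasing with $\Psi(0^+)=\pi^2\bigl(\tfrac{p^2-(r+a)^2}{q^2}\bigr)$ and $\Psi(1^-)=+\infty$. Existence and uniqueness of $m$ with $\Psi(m)=\pi^2 b^2/q^2$ is then exactly the condition $(r+a)^2+b^2>p^2$, which also gives the converse; no Jacobian analysis, hyperelliptic curves, or bilinear identities are needed.
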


Let \(a,b,p,q,r,\tau_1,\tau_2,\tau_3\) be as in Proposition~\ref{pr:intro-taus}.
Put
\begin{equation}\label{eq:intro-mn}
m:=\frac{\tau_2-\tau_1}{\tau_3-\tau_1},\quad
n_0:=-\frac{\tau_2-\tau_1}{\tau_1},\quad
n_1:=\frac{\tau_2-\tau_1}{1-\tau_1}.
\end{equation}
Consider the map
\[
\tilde u_{a,b}^{p,q,r}\colon\mathbb R^2\to\mathbb S^3,\qquad
\tilde u_{a,b}^{p,q,r}(x,y)=(\cos\phi(y)e^{i\theta(y)},\sin\phi(y)e^{i(2\pi
x+\alpha(y))}),
\]
where \(\phi(y)\) is given by
\begin{subnumcases}{\phi(y)=}
\arccos\sqrt{(\tau_2-\tau_1)\sn^2(2\pi\sqrt{\tau_3-\tau_1}y\mid m)+
\tau_1},
&\(\frac{p}q>\frac{1}2,\quad\bigl|\frac{r+a}q\bigr|<\frac{1}2,\)\label{eq:intro-phigen}\\
\arccos(\sqrt{\tau_2}\sn(2\pi\sqrt{\tau_3}y\mid m)),
&\(\frac{p}q=\frac{1}2,\quad\bigl|\frac{r+a}q\bigr|<\frac{1}2,\)\label{eq:intro-philim1}\\
\arcsin(\sqrt{1-\tau_1}\cn(2\pi\sqrt{\tau_3-\tau_1}y\mid m)),
&\(\frac{p}q>\frac{1}2,\quad\bigl|\frac{r+a}q\bigr|=\frac{1}2,\)\label{eq:intro-philim2}\\
\frac{\pi}2-\am(2\pi\sqrt{\tau_3}y\mid m),
&\(\frac{p}q=\bigl|\frac{r+a}q\bigr|=\frac{1}2\)\label{eq:intro-philim12},
\end{subnumcases}
\(\theta(y)\) is given by
\begin{numcases}{\theta(y)=}
\sqrt{\frac{\tau_2\tau_3}{\tau_1(\tau_3-\tau_1)}}
\Pi(n_0;\am(2\pi\sqrt{\tau_3-\tau_1}y\mid m)\mid m),
&\(\frac{p}q>\frac{1}2,\)\label{eq:intro-theta}\\
\const,&\(\frac{p}q=\frac{1}2,\)\notag
\end{numcases}
and \(\alpha(y)\) is given by
\begin{numcases}{\alpha(y)=}
-\sgn(r+a)\sqrt{\frac{(1-\tau_2)(\tau_3-1)}{(1-\tau_1)(\tau_3-\tau_1)}}
\Pi(n_1;\am(2\pi\sqrt{\tau_3-\tau_1}y\mid m)\mid m),
&\(\bigl|\frac{r+a}q\bigr|<\frac{1}2,\)\label{eq:intro-alpha}\\
\const,
&\(\bigl|\frac{r+a}q\bigr|=\frac{1}2.\)\notag
\end{numcases}
We are ready to state the key technical result of the paper.

\begin{theorem}\label{th:intro-main}
The map \(\tilde u_{a,b}^{p,q,r}\) descends to a harmonic map
\(u_{a,b}^{p,q,r}\colon (\mathbb T^2,[g_{a,b}])\to\mathbb S^3\) with energy
density given by
\[
e(u_{a,b}^{p,q,r})=\rho(y):=2\pi^2(-\cos 2\phi+\tau_1+\tau_2+\tau_3-1).
\]
The number \(N(2)\) for the metric \(g_{a,b}^{p,q,r}:=\rho(y)g_{a,b}\) satisfies
\begin{equation}\label{eq:intro-N2ineq}
N(2)\ge 2p-1+\delta_{2p,q}+2(\lceil
2|r+a|-1\rceil+\delta_{r+a,0}),
\end{equation}
and the value of the corresponding functional is
\begin{equation}\label{eq:intro-barlval}
\bar\lambda_{N(2)}(\mathbb T^2,g_{a,b}^{p,q,r})=4\pi^2 b(\tau_1+\tau_2-\tau_3)+
8\pi q\sqrt{\tau_3-\tau_1}E(m).
\end{equation}
The inequality~\eqref{eq:intro-N2ineq} turns into equality whenever
\begin{equation}\label{eq:intro-N2eqsc}
\tau_2+\tau_3-\tau_1\le 4.
\end{equation}
This condition is satisfied whenever
\begin{equation}\label{eq:intro-N2eqscsc}
\text{either}\quad
\frac{p}q>\frac{1}{\sqrt 3}\quad\text{or}\quad
\left|\frac{r+a}q\right|<\frac{\sqrt 3}4.
\end{equation}
Moreover, for \(p=q=1,r=0\) we have
\begin{equation}\label{eq:intro-barlineq}
\bar\lambda_1(\mathbb T^2,g_{a,b}^{1,1,0})>\max\Bigl\{\frac{4\pi^2}b,
8\pi\Bigr\}\ge
\bar\lambda_1(\mathbb T^2,g_{a,b}).
\end{equation}
\end{theorem}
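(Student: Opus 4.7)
The plan follows the order of the theorem's conclusions, with the Morse-index inequality~\eqref{eq:intro-N2ineq} being the main obstacle. Invariance of $\tilde u_{a,b}^{p,q,r}$ under $(x,y)\mapsto(x+1,y)$ is built into the ansatz. For invariance under $(x,y)\mapsto(x+a,y+b)$, I would substitute $t=\tau_1+(\tau_2-\tau_1)\sn^2(v\mid m)$ in~\eqref{eq:intro-taub}--\eqref{eq:intro-taur} to get explicit values of $K(m)$, $\Pi(n_0\mid m)$, and $\Pi(n_1\mid m)$, then apply the quasi-period relations $\sn(u+2K\mid m)=-\sn(u\mid m)$ and $\Pi(n;\am(u+2K\mid m)\mid m)=\Pi(n;\am(u\mid m)\mid m)+2\Pi(n\mid m)$. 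Together these yield $\phi(y+b/q)=\phi(y)$, $\theta(y+b/q)-\theta(y)=2\pi p/q$, and $\alpha(y+b/q)-\alpha(y)=-2\pi(r+a)/q$; iterating $q$ times and using integrality of $p,r$ gives the required lattice invariance. With $c:=\cos^2\phi$, the identities $1-n_0\sn^2=c/\tau_1$ and $1-n_1\sn^2=(1-c)/(1-\tau_1)$ (immediate from the definitions of $n_0,n_1$) let me rewrite
\[
|\d\tilde u|^2=4\pi^2\sin^2\phi+(\phi')^2+(\theta')^2\cos^2\phi+(\alpha')^2\sin^2\phi
\]
as a rational function of $c$; partial-fraction decomposition of $(c-\tau_1)(\tau_2-c)(\tau_3-c)/(c(1-c))$ then collapses it to $4\pi^2[(\tau_1+\tau_2+\tau_3)-2c]$, giving the claimed $\rho$. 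Harmonicity reduces under the $\mathbb S^1$-equivariance to $(\cos^2\phi\,\theta')'=(\sin^2\phi\,\alpha')'=0$ (automatic from the ansatz, with conserved values $2\pi\sqrt{\tau_1\tau_2\tau_3}$ and $\mp2\pi\sqrt{(1-\tau_1)(1-\tau_2)(\tau_3-1)}$) and $\phi''=\tfrac12\sin 2\phi[(\alpha')^2+4\pi^2-(\theta')^2]$, which I verify by differentiating the first integral $(\phi')^2=P(c)$ and matching $-P'(c)$ to the bracket via the same partial fractions.

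Because the components of $\tilde u$ are $2$-eigenfunctions of $\Delta_{\rho g_{a,b}}$, one has $\bar\lambda_{N(2)}=2\int_0^b\rho(y)\,\d y$; substituting $\cos 2\phi=2(\tau_2-\tau_1)\sn^2+2\tau_1-1$, using $\int_0^{2K}\sn^2\,\d u=2(K-E)/m$ over the $q$ periods in $[0,b]$, and inserting $K=\pi b\sqrt{\tau_3-\tau_1}/q$ yields~\eqref{eq:intro-barlval} after cancellation of the $\tau_1$ pieces.

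For the Morse index, since $\rho$ depends only on $y$, the quadratic form $Q(u)=\int(|\nabla u|^2-2\rho u^2)\,\d V_{g_{a,b}}$ decomposes under Fourier expansion in $x$: for $u=e^{2\pi ikx}h_k(y)$ with $h_k(y+b)=e^{-2\pi ika}h_k(y)$, $Q$ becomes a direct sum of quadratic forms of the 1D Schr\"odinger operators $L_k=-\d^2/\d y^2+4\pi^2k^2-2\rho$, and $N(2)=\sum_k\mathrm{ind}(L_k)$. The lower bound~\eqref{eq:intro-N2ineq} is obtained by applying Hill-type oscillation theory to the explicit kernel elements of $L_0$ and $L_{\pm1}$: the real and imaginary parts of $\cos\phi\,e^{i\theta}\in\ker L_0$ have node counts dictated by the $p$-fold winding of $\theta$ on $[0,b]$ (with additional zeros of $\cos\phi$ in the boundary case $\tau_1=0$, i.e.~$2p=q$), forcing $\mathrm{ind}(L_0)\geq 2p-1+\delta_{2p,q}$; analogously, $\sin\phi\,e^{i(2\pi x+\alpha)}\in\ker L_{\pm1}$ produces a kernel whose node count is governed by the $\alpha$-winding $-2\pi(r+a)$ over $[0,b]$, yielding the contribution $2(\lceil 2|r+a|-1\rceil+\delta_{r+a,0})$ (the factor $2$ coming from the $\pm k$ pairing). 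The hard part is to upgrade these to equalities.

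For the equality case, the range $\cos^2\phi\in[\tau_1,\tau_2]$ gives $2\rho\leq 4\pi^2(\tau_2+\tau_3-\tau_1)$, so $\tau_2+\tau_3-\tau_1\leq 4$ makes $L_k\geq 0$ for all $|k|\geq 2$ and caps the total index at what the sectors $|k|\leq 1$ can provide; the matching upper bound in those sectors comes from a Pr\"ufer/Sturm argument on the Lam\'e-type operators $L_0$ and $L_1$ using the explicit kernel structure, and this matching is the most delicate step. The implications~\eqref{eq:intro-N2eqscsc}$\Rightarrow$\eqref{eq:intro-N2eqsc} follow from elementary monotonicity of $\tau_3$ in $p/q$ and $|r+a|/q$ read off from~\eqref{eq:intro-taup}--\eqref{eq:intro-taur}. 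Finally, for $(p,q,r)=(1,1,0)$ the strict inequality~\eqref{eq:intro-barlineq} is obtained by direct analytic estimation of~\eqref{eq:intro-barlval}: on the boundary arc $a^2+b^2=1$ the parameters degenerate ($\tau_1\to 0$, $\tau_2\to 1$) to the flat metric, recovering $4\pi^2/b$, and a strict monotonicity analysis of the explicit expression as one moves into the interior of $\mathcal M$ upgrades the bound to strict inequality against both $4\pi^2/b$ and $8\pi$.
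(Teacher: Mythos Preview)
Your overall architecture matches the paper's: direct verification of harmonicity and the energy-density formula, Fourier decomposition in \(x\) to reduce \(N(2)\) to Sturm--Liouville index counts for the operators \(L_k\), and the observation that \(\tau_2+\tau_3-\tau_1\le 4\) kills the sectors \(|k|\ge 2\). However, several of your steps contain genuine gaps or errors.

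First, the lower bound \(\mathrm{ind}(L_0)\ge 2p-1+\delta_{2p,q}\) does \emph{not} follow from node counting alone in the first limit case \(2p=q\). There \(\theta\equiv\text{const}\), so you have a single periodic eigenfunction \(\cos\phi\) with exactly \(2p\) zeros on \([0,b)\); periodic Sturm oscillation only tells you \(\lambda_{2p-1}(0)=2\) or \(\lambda_{2p}(0)=2\), hence \(\mathrm{ind}(L_0)\ge 2p-1\), not \(\ge 2p\). The extra unit comes from a separate argument: the paper uses a rearrangement-inequality trick (comparing \(\cos\phi(y)\) with its shift \(\cos\phi(y+b_0)\) against the monotone weight \(\rho\)) to produce a strictly smaller Rayleigh quotient and force \(\lambda_{2p-1}(0)<2\). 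Your parenthetical about ``additional zeros of \(\cos\phi\)'' does not supply this.

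Second, your claim that \eqref{eq:intro-N2eqscsc}\(\Rightarrow\)\eqref{eq:intro-N2eqsc} follows from ``elementary monotonicity of \(\tau_3\)'' is not right: the three \(\tau_i\) are coupled through \eqref{eq:intro-taub}--\eqref{eq:intro-taur}, and the paper's proof (Proposition~4.5) is a careful case analysis requiring several nontrivial inequalities for \(K,E,\Pi\).

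Third, your endgame for \eqref{eq:intro-barlineq} rests on a wrong degeneration. As \((a,b)\to(a_0,b_0)\) with \(a_0^2+b_0^2=1\) and \(p=q=1,r=0\), one has \(m\to 0\) and \(\tau_1,\tau_2\to\tfrac{3}{4b_0^2}\), \(\tau_3\to\tfrac{1}{b_0^2}\) (see the paper's third-limit-case analysis), \emph{not} \(\tau_1\to 0,\tau_2\to 1\). The limiting value \(4\pi^2/b\) is still recovered, but your proposed ``monotonicity as one moves into the interior'' is unanchored. The paper instead introduces auxiliary functions \(\Xi(m)\) and \(\widetilde\Xi(m)=\Xi/\sqrt{\Psi}\), proves \(\Xi'>0\) and \(\widetilde\Xi'<0\) using the identities \eqref{eq:taus-EKineq}--\eqref{eq:taus-EKpos} and the implicit derivative \(n_0'(m)\), and reads off the two strict inequalities from the boundary values \(\Xi(0^+)=\pi^2\) and \(\widetilde\Xi(1^-)=2\).
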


The cases \(\frac{p}q=\frac{1}2\) and \(\bigl|\frac{r+a}q\bigr|=\frac{1}2\) are
called the \textbf{first} and the \textbf{second limit cases} respectively. The
\textbf{third limit case}, i.e. what happens when the third inequality
in~\eqref{eq:intro-pqrineq} becomes equality, is treated in sec.~\ref{sec:lim3}.
Theorem~\ref{th:intro-flatnotmax} follows directly from
Theorem~\ref{th:intro-main} with \(p=q=1,r=0\). Note that for \(a=\frac{1}2\)
this corresponds to the second limit case.

Here are some important points about Proposition~\ref{pr:intro-taus} and
Theorem~\ref{th:intro-main}, which may help to gain some intuition about what is
going on.

\begin{itemize}
\item
Although we do not assume that \((a,b)\in\mathcal M\), it is always so in the
case \(p=q=1,r=0\) because of~\eqref{eq:intro-pqrineq}.

\item
In the first limit case the map \(u_{a,b}^{p,q,r}\) is not full, i.e. the image
is contained in an equatorial \(\mathbb S^2\subset\mathbb S^3\).

\item
In the second limit case we have either \(a\in\mathbb Z\) and \(q\) is even or
\(a\notin\mathbb Z,2a\in\mathbb Z\) and \(q\) is odd.

\item
The ``hybrid'' limit case \(\frac{p}q=\bigl|\frac{r+a}q\bigr|=\frac{1}2\) is
possible only for \(a\in\mathbb Z\). The maps \(u_{0,b}^{1,2,1}\) coincide with
the Gauss maps of Delaunay nodoids and unduloids,
see~\cite[sec.~4]{Eel1987},\cite[Theorem~4.3.2]{Smith1972}. When \(b\to+\infty\)
(equivalently, \(\tau_3,m\to 1\)), the map \(u_{0,b}^{1,2,1}\) converges to the
Gauss map of the catenoid.

\item
The inequality~\eqref{eq:intro-N2ineq} can be strict, see
Remark~\ref{rem:weyl-N2noteq}.
\end{itemize}

Let \(a,b\in\mathbb R,b>0\) and let \(p>0,r\) be integers satisfying
\begin{equation}\label{eq:intro-preq}
(r+a)^2+b^2=p^2.
\end{equation}
Then for each \(\phi_0\in\bigl[0,\frac{\pi}2\bigr]\) the map
\[
\tilde u_{a,b}^{p,r;\phi_0}\colon\mathbb R^2\to\mathbb S^3,\qquad
\tilde u_{a,b}^{p,r;\phi_0}(x,y):=
(\cos\phi_0\,e^{\frac{2\pi i}b py},\sin\phi_0\,e^{\frac{2\pi i}b (bx-(r+a)y)})
\]
descends to a harmonic map \(u_{a,b}^{p,r;\phi_0}\colon (\mathbb
T^2,[g_{a,b}])\to\mathbb S^3\). Our classification result is the following.

\begin{theorem}\label{th:intro-harm}
Let \(a,b\in\mathbb R,b>0\) and \(u\colon (\mathbb T^2,[g_{a,b}])\to\mathbb
S^3\) be a harmonic map that is \(\mathbb S^1\)-equivariant w.r.t. the
actions~(\ref{eq:intro-actS3},\ref{eq:intro-actT2}). Then there exist an
\(\mathbb S^1\)-equivariant conformal automorphism \(\gamma_1\colon (\mathbb
T^2,[g_{a,b}])\to (\mathbb T^2,[g_{a,b}])\) and an \(\mathbb S^1\)-equivariant
isometry \(\gamma_2\colon\mathbb S^3\to\mathbb S^3\) such that either
\(u=\gamma_2\circ u_{a,b}^{p,q,r}\circ\gamma_1\) for some integer \(p,q>0\) and
\(r\) satisfying~\eqref{eq:intro-pqrineq} or \(u=\gamma_2\circ
u_{a,b}^{p,r;\phi_0}\circ\gamma_1\) for some integer \(p>0,r\)
satisfying~\eqref{eq:intro-preq} and \(\phi_0\in\bigl[0,\frac{\pi}2\bigr]\).
\end{theorem}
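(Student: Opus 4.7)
The plan is to exploit the $\mathbb S^1$-equivariance to reduce the harmonic-map PDE to an integrable one-dimensional ODE system, and then match its integration constants with the quantized data $(p,q,r)$ or $(p,r,\phi_0)$ prescribed by the two model families.

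First, I would lift $u$ to the universal cover and write $\tilde u(x,y)=(z_1(x,y),z_2(x,y))$ with $|z_1|^2+|z_2|^2=1$. Equivariance with respect to the actions~(\ref{eq:intro-actS3})--(\ref{eq:intro-actT2}) immediately forces $z_1=z_1(y)$ and $z_2(x,y)=e^{2\pi ix}w(y)$ for some $w\colon\mathbb R\to\mathbb C$, while $\Gamma_{a,b}$-invariance imposes $z_1(y+b)=z_1(y)$ and $w(y+b)=e^{-2\pi ia}w(y)$. Substituting into the tension-field equation $\Delta\tilde u=|\nabla\tilde u|^2\tilde u$ on the flat $\mathbb R^2$ decouples it into the pair of complex ODEs
\[
-z_1''=\Lambda(y)\,z_1,\qquad -w''+4\pi^2 w=\Lambda(y)\,w,\qquad\Lambda=|z_1'|^2+|w'|^2+4\pi^2|w|^2.
\]
Taking imaginary parts against $\bar z_1$ and $\bar w$ respectively yields the globally defined conservation laws $\Im(\bar z_1 z_1')=\tilde C_1$ and $\Im(\bar w w')=\tilde C_2$.

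On the open set where $z_1 z_2\ne 0$, polar coordinates $z_1=\cos\phi\,e^{i\theta}$, $w=\sin\phi\,e^{i\psi}$ recast these as $\cos^2\phi\,\theta'=\tilde C_1$ and $\sin^2\phi\,\psi'=\tilde C_2$; combining the real parts with an energy integral gives
\[
(\phi')^2=4\pi^2\sin^2\phi-\tilde C_2^2/\sin^2\phi-\tilde C_1^2/\cos^2\phi+2E.
\]
In the variable $\tau=\cos^2\phi$ this becomes $(\tau')^2=16\pi^2(\tau-\tau_1)(\tau_2-\tau)(\tau_3-\tau)$, whose roots satisfy $\tilde C_1^2=4\pi^2\tau_1\tau_2\tau_3$ and $\tilde C_2^2=4\pi^2(1-\tau_1)(1-\tau_2)(\tau_3-1)$. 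If $\phi\equiv\phi_0$ is constant, then $\phi''=0$ combined with the quasi-periodicities $b\theta'\in 2\pi\mathbb Z$ and $b\psi'+2\pi a\in 2\pi\mathbb Z$ forces $b^2+(r+a)^2=p^2$, i.e.~(\ref{eq:intro-preq}), and up to a global $\mathbb S^3$-rotation one recovers $u_{a,b}^{p,r;\phi_0}$. Otherwise $\tau$ oscillates, with explicit solution $\tau(y)=(\tau_2-\tau_1)\sn^2(2\pi\sqrt{\tau_3-\tau_1}\,y\mid m)+\tau_1$, while $\theta,\psi$ are obtained from the incomplete elliptic integrals of the third kind.

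Imposing that the fundamental period of $\tau$ divides $b$ an integer number of times produces $q$ and recovers~(\ref{eq:intro-taub}); integrating $\theta'$ and $\psi'$ over one $y$-period and demanding the quasi-periodicities modulo $2\pi$ give integers $p,r$ and identities~(\ref{eq:intro-taup})--(\ref{eq:intro-taur}); the geometric constraints $0\le\tau_1<\tau_2\le 1\le\tau_3$ translate directly into~(\ref{eq:intro-pqrineq}). The equivariant $\mathbb S^3$-isometries $(z_1,z_2)\mapsto(e^{-i\theta_0}z_1,e^{-i\psi_0}z_2)$ and $(z_1,z_2)\mapsto(\bar z_1,z_2)$ absorb the additive constants of $\theta,\psi$ and the sign of $\tilde C_1$, while the equivariant conformal automorphism $(x,y)\mapsto(x+x_0,y+y_0)$ of $(\mathbb T^2,[g_{a,b}])$ fixes the initial value of $y$ so that $\tau(0)=\tau_1$. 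This brings $u$ into the canonical form $u_{a,b}^{p,q,r}$.

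The main obstacle will be the limit cases $\tau_1=0$, $\tau_2=1$, and their combination, at which the polar decomposition breaks down because $z_1$ or $w$ passes through zero. Here the identity $\tilde C_1=\Im(\bar z_1 z_1')$ shows that $\tilde C_1$ must vanish whenever $z_1$ has a zero, which forces $\theta\equiv\const$ and allows the real-valued $z_1$ to change sign; this is exactly how the arcsine and amplitude branches in~(\ref{eq:intro-philim1})--(\ref{eq:intro-philim12}) enter. Verifying that the quasi-periodicities still quantize cleanly into~(\ref{eq:intro-pqrineq}) in these degenerate regimes, and checking that no $\mathbb S^1$-equivariant harmonic map is lost by restricting the bulk analysis to the open locus $z_1 z_2\ne 0$, is the main technical point to execute carefully.
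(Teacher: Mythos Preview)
Your proposal is correct and follows essentially the same route as the paper: reduce to a one-dimensional system via equivariance, extract the two first integrals (your $\tilde C_1,\tilde C_2$ are the paper's $c,d$), pass to the cubic in $\tau=\cos^2\phi$, integrate via Jacobi elliptic functions, and quantize via the quasi-periodicities to recover $(p,q,r)$ and Proposition~\ref{pr:intro-taus}. The only cosmetic difference is that the paper invokes Smith's reduction theorem and the orbit-space picture $\mathbb S^3/\mathbb S^1\cong\mathbb S^2_{\ge 0}$ to derive the ODEs, whereas you substitute the equivariant ansatz directly into the tension-field equation; your use of the globally defined quantities $\Im(\bar z_1 z_1')$ and $\Im(\bar w w')$ is in fact a slightly cleaner way to see why the limit cases $\tau_1=0$ and $\tau_2=1$ force the corresponding constant to vanish.
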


\subsection{Organization of the paper}

The paper is organized as follows. Proposition~\ref{pr:intro-taus} and
Theorem~\ref{th:intro-harm} are proved in sec.~\ref{sec:taus} and~\ref{sec:harm}
respectively. The harmonicity of the maps \(u_{a,b}^{p,q,r}\) is a byproduct of
the proof. In sec.~\ref{sec:weyl} we prove the bound~\eqref{eq:intro-N2ineq}
together with sufficient
conditions~\eqref{eq:intro-N2eqsc},\eqref{eq:intro-N2eqscsc}. In
sec.~\ref{sec:barlval} we prove the inequality~\eqref{eq:intro-barlval} (which
finishes the proof of Theorem~\ref{th:intro-main}) and
Theorem~\ref{th:intro-rectmax}. In sec.~\ref{sec:otsuki} we show how Otsuki tori
fit into our picture and in sec.~\ref{sec:ab} we prove
Theorem~\ref{th:intro-ab}. The third limit case is studied in
sec.~\ref{sec:lim3}. Finally, in sec.~\ref{sec:disc} we discuss the difficulties
in proving Conjecture~\ref{con:intro-maxmetric} and sketch possible directions
for the future research.

\subsection*{Acknowledgements}

The author is grateful to Mikhail Karpukhin and Iosif Polterovich for reading
early versions of the manuscript and giving numerous suggestions on how to
improve it. Special thanks to Mikhail Karpukhin for
Proposition~\ref{pr:ab-hopf}. The author is also grateful to Braxton Osting,
Chiu-Yen Kao, and Alexei V.\,Penskoi for useful discussions. The work is
partially supported by the ISM Graduate Scholarship.

\section{Existence of \texorpdfstring{\(\tau_1,\tau_2,\tau_3\)}{τ₁, τ₂, τ₃}}
\label{sec:taus}

In this section we prove Proposition~\ref{pr:intro-taus}. For everything
concerning elliptic integrals the reader may consult~\cite{BF1971,Car2025,WR}.
Here and in sec.~\ref{sec:barlval} we relay on symbolic calculations made with
Wolfram Mathematica. Although it would be very tedious, in principle it is
possible to verify all the calculations manually.

By~\cite[233.00 and 233.02]{BF1971} we have
\begin{align}
\int_{\tau_1}^{\tau_2}\frac{\d t}{\sqrt{(t-\tau_1)(\tau_2-t)(\tau_3-t)}}&=
\frac{2}{\sqrt{\tau_3-\tau_1}}
K\left(\frac{\tau_2-\tau_1}{\tau_3-\tau_1}\right),\notag\\
\int_{\tau_1}^{\tau_2}
\frac{\sqrt{\tau_1\tau_2\tau_3}\,\d t}{t\sqrt{(t-\tau_1)(\tau_2-t)(\tau_3-t)}}&=
2\sqrt{\frac{\tau_2\tau_3}{\tau_1(\tau_3-\tau_1)}}
\Pi\left(-\frac{\tau_2-\tau_1}{\tau_1}\bigmid
\frac{\tau_2-\tau_1}{\tau_3-\tau_1}\right),\label{eq:taus-bf}\\
\int_{\tau_1}^{\tau_2}
\frac{\sqrt{(1-\tau_1)(1-\tau_2)(\tau_3-1)}\,\d t}
{(1-t)\sqrt{(t-\tau_1)(\tau_2-t)(\tau_3-t)}}&=
2\sqrt{\frac{(1-\tau_2)(\tau_3-1)}{(1-\tau_1)(\tau_3-\tau_1)}}
\Pi\left(\frac{\tau_2-\tau_1}{1-\tau_1}\bigmid
\frac{\tau_2-\tau_1}{\tau_3-\tau_1}\right).\notag
\end{align}
Hence, one can rewrite~\eqref{eq:intro-taub}--\eqref{eq:intro-taur} in terms
of~\eqref{eq:intro-mn} as
\begin{equation}\label{eq:taus-nm}
\begin{gathered}
\sqrt{\left(\frac{1}{n_1}-\frac{1}{n_0}\right)m}K(m)=\frac{\pi b}q,\quad
\sqrt{\frac{(1-n_0)(n_0-m)}{n_0}}\Pi(n_0\mid m)=\frac{\pi p}q,\\
\sqrt{\frac{(1-n_1)(n_1-m)}{n_1}}\Pi(n_1\mid m)=\left|\frac{\pi (r+a)}q\right|.
\end{gathered}
\end{equation}
Introduce the function
\begin{equation}\label{eq:taus-Phi}
\Phi(n\mid m):=
\sqrt{\frac{(1-n)(n-m)}n}\Pi(n\mid m).
\end{equation}
We have
\[
\frac{\partial\Phi}{\partial n}(n\mid m)=
\frac{nE(m)+(m-n)K(m)}{2n^2}\sqrt{\frac{n}{(1-n)(n-m)}}.
\]
Since
\begin{equation}\label{eq:taus-EKineq}
E(m)<K(m)<\frac{1}{1-m}E(m),\qquad m\in (0,1),
\end{equation}
it is easy to see that
\begin{equation}\label{eq:taus-EKpos}
nE(m)+(m-n)K(m)>0,\qquad m\in (0,1),\quad n\in (-\infty,0)\cup (m,1),
\end{equation}
and it follows that for \(m\in (0,1)\) fixed, \(\Phi(n\mid m)\) is an increasing
function in \(n\) on the intervals \((-\infty,0)\) and \((m,1)\). Also, we have
\begin{equation}\label{eq:taus-nlim}
\begin{aligned}
\Pi(n\mid m)&=\frac{\pi}2\frac{1}{\sqrt{-n}}+o\left(\frac{1}{\sqrt{-n}}\right),&
n&\to -\infty,\\
\Pi(n\mid m)&=\frac{\pi}2\sqrt{\frac{n}{(1-n)(n-m)}}+
o\left(\frac{1}{\sqrt{1-n}}\right),&
n&\to 1,
\end{aligned}
\end{equation}
and hence
\[
\lim_{n\to -\infty}\Phi(n\mid m)=\lim_{n\to 1}\Phi(n\mid m)=\frac{\pi}2.
\]
Since
\[
\lim_{n\to 0-}\Phi(n\mid m)=+\infty,\qquad
\Phi(m\mid m)=0,
\]
we conclude that for each \(m\in (0,1)\) there exists a unique pair of \(n_0\in
[-\infty,0)\) and \(n_1\in [m,1]\) such that
\begin{equation}\label{eq:taus-n0n1}
\Phi(n_0\mid m)=\frac{\pi p}q,\qquad
\Phi(n_1\mid m)=\left|\frac{\pi (r+a)}q\right|.
\end{equation}
In particular, we see that \(\frac{p}q=\frac{1}2\) if and only if
\(n_0=-\infty\) and \(\tau_1=0\). Similarly,
\(\bigl|\frac{r+a}q\bigr|=\frac{1}2\) if and only if \(n_1=1\) and \(\tau_2=1\).
Consider \(n_0,n_1\) as functions in \(m\) defined implicitly
by~\eqref{eq:taus-n0n1} and introduce the function
\begin{equation}\label{eq:taus-Psi}
\Psi(m):=\left(\frac{1}{n_1}-\frac{1}{n_0}\right)mK(m)^2.
\end{equation}
From the implicit function theorem we find
\begin{equation}\label{eq:taus-nprim}
n_i'=-\left.\frac{\partial\Phi}{\partial m}(n_i\mid m)\middle/
\frac{\partial\Phi}{\partial n}(n_i\mid m)\right.=
\frac{n_i(n_i-1)E(m)}{(m-1)(n_i E(m)+(m-n_i)K(m))},\qquad i=0,1,
\end{equation}
and
\[
\Psi'(m)=\frac{KE(K-E)((m-1)K+E)}{1-m}
\frac{n_1-n_0}{(n_0E+(m-n_0)K)(n_1E+(m-n_1)K)}>0,
\]
where we omit the argument \(m\) in \(K(m),E(m)\) and
use~\eqref{eq:taus-EKineq},\eqref{eq:taus-EKpos} in the latter inequality. Since
\[
\Pi(n\mid 0)=\frac{\pi}2\frac{1}{\sqrt{1-n}},
\]
it is easy to see that
\begin{equation}\label{eq:taus-limzero}
\lim_{m\to 0}\frac{m}{n_0}=1-\frac{4p^2}{q^2},\qquad
\lim_{m\to 0}\frac{m}{n_1}=1-\frac{4(r+a)^2}{q^2},\qquad
\lim_{m\to 0}\Psi(m)=\frac{\pi^2}{q^2}(p^2-(r+a)^2),
\end{equation}
and since
\[
\Pi(n\mid m)=-\frac{1}{2(1-n)}\ln(1-m)+o(\ln(1-m)),\qquad m\to 1,
\]
it is easy to see that
\begin{equation}\label{eq:taus-limone}
\lim_{m\to 1}n_0=-\infty,\qquad
\lim_{m\to 1}n_1=1,\qquad
\lim_{m\to 1}\Psi(m)=+\infty.
\end{equation}
Since \((r+a)^2+b^2>p^2\), there exists a unique \(m\in (0,1)\) such that
\(\Psi(m)=\frac{\pi^2 b^2}{q^2}\), which concludes the proof. The converse is
proved simply by reversing the argument above.\qed

\section{\texorpdfstring{\(\mathbb S^1\)}{S¹}-equivariant harmonic tori in
\texorpdfstring{\(\mathbb S^3\)}{S³}}\label{sec:harm}

In this section we classify all \(\mathbb S^1\)-equivariant harmonic tori in
\(\mathbb S^3\) w.r.t. the actions~(\ref{eq:intro-actS3},\ref{eq:intro-actT2}).
As a byproduct, we obtain that the maps \(u_{a,b}^{p,q,r}\) are harmonic.

The orbit space of the action~\eqref{eq:intro-actS3} identifies with
\[
\mathbb S_{\ge 0}^2=\{(x_1,x_2,x_3)\in\mathbb S^2\colon x_3\ge 0\}.
\]
The identification is given by
\[
\mathbb S_{\ge 0}^2\cong\mathbb S^3/\mathbb S^1,\qquad
(x_1,x_2,x_3)\leftrightarrow
\{(x_1+ix_2,e^{2\pi i\alpha}x_3)\in\mathbb S^3\mid
\alpha\in\mathbb R/\mathbb Z\}.
\]
The points on the boundary \(\partial\mathbb S_{\ge 0}^2\) (i.e. with \(x_3=0\))
correspond to zero dimensional orbits, i.e. fixed points of the \(\mathbb S^1\)
action. These are so-called \emph{exceptional orbits}. The points in \(\mathbb
S_{>0}^2:=\mathbb S_{\ge 0}^2\setminus\partial\mathbb S_{\ge 0}^2\) correspond
to one dimensional orbits. These are \emph{orbits of general type}.

\begin{proof}[Proof of Theorem~\ref{th:intro-harm}]
Since the orbit space of the action~\eqref{eq:intro-actT2} identifies with
\(\mathbb R/b\mathbb Z\), the map \(u\) descends to a map \(u_*\colon\mathbb
R/b\mathbb Z\to\mathbb S_{\ge 0}^2\) smooth on \(u_*^{-1}(\mathbb S_{>0}^2)\).
The image of this map is a curve \(\gamma_u\) in the orbit space. In the sequel,
we work with the lifted map \(\tilde u_*\colon\mathbb R\to\mathbb S_{\ge 0}^2\).
Introduce spherical coordinates on \(\mathbb S_{\ge 0}^2\),
\[
\mathbb S_{\ge 0}^2=\{(\cos\phi\cos\theta,\cos\phi\sin\theta,\sin\phi)\mid
\phi\in [0,\pi/2],\theta\in\mathbb R/2\pi\mathbb Z\}.
\]
Note that \(\theta\) is not unique at \(N:=(0,0,1)\in\mathbb S_{\ge 0}^2\).
Suppose for now that \(N\notin\gamma_u\) and \(\gamma_u\cap\partial\mathbb
S_{\ge 0}^2=\emptyset\) (we will see soon that these assumptions mean that we
are not dealing with the limit cases). Then \(\tilde u_*\) is given in spherical
coordinates by smooth functions \(\phi\colon\mathbb R\to (0,\pi/2)\) and
\(\theta\colon\mathbb R\to\mathbb R\). Since \(\gamma_u\) is closed, the
function \(\phi(y)\) is \(b\)-periodic and the function \(\theta(y)\) satisfies
\begin{equation}\label{eq:harm-thper}
\theta(y+b)\equiv\theta(y)+2\pi p
\end{equation}
for some \(p\in\mathbb Z\) (this property is sometimes called
\emph{quasiperiodicity}).

Here we come to a crucial difference between the minimal case of~\cite{HL1971}
and the harmonic case. An equivariant minimal surface is completely determined
by its image in the orbit space. However, it is not so in the harmonic case
since harmonicity is not invariant under a reparametrization of the domain.
Therefore, the full expression for the map \(u\) must include an additional
parameter controlling the coordinate in the orbit. This said, the lifted map
\(\tilde u\colon\mathbb R^2\to\mathbb S^3\) has the form
\[
\tilde u(x,y)=(\cos\phi(y)e^{i\theta(y)},\sin\phi(y)e^{i(2\pi
x+\alpha(y))})
\]
for some smooth function \(\alpha\colon\mathbb R\to\mathbb R\). Again, since
\(\gamma_u\) is closed, the function \(\alpha(y)\) satisfies
\begin{equation}\label{eq:harm-alper}
\alpha(y+b)\equiv\alpha(y)-2\pi (r+a)
\end{equation}
for some \(r\in\mathbb Z\). In the sequel, we often omit the argument \(y\) in
\(\phi(y),\theta(y),\alpha(y)\). We now use the following result

\begin{theorem}[{\cite[Theorem~1.3.2]{Smith1972}}]\label{th:harm-red}
Let \(G\) be a Lie group acting on Riemannian manifolds \(M,N\). A
\(G\)-equivariant map \(f\colon M\to N\) is harmonic if and only if its energy
is stationary w.r.t. all compactly supported equivariant variations.
\end{theorem}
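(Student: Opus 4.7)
This is an instance of Palais's principle of symmetric criticality for the Dirichlet energy functional, and I would prove it by the standard averaging/equivariance argument in two steps. The ``only if'' direction is immediate: harmonicity is the statement that the energy is stationary under \emph{all} compactly supported variations, and equivariant variations form a subclass, so stationarity under the larger class implies it under the smaller.

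For the nontrivial ``if'' direction, I would first translate the hypothesis into a property of the tension field. Any compactly supported equivariant variation $f_t$ of $f$ has variation vector field $V=\partial_t f_t|_{t=0}\in\Gamma_c(f^*TN)$ that is itself $G$-equivariant, meaning $V(g\cdot x)=dg|_{f(x)}V(x)$. Conversely, any such $V$ exponentiates to a compactly supported equivariant variation, because $G$ acts on $N$ by isometries and so the exponential map on $N$ commutes with the action. Thus the hypothesis reads: $\int_M\langle\tau(f),V\rangle\,\d v_g=0$ for every compactly supported equivariant section $V$ of $f^*TN$, where $\tau(f)$ is the tension field of $f$.

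The crucial observation is that $\tau(f)$ is \emph{itself} an equivariant section of $f^*TN$. This is a naturality property of the intrinsic definition $\tau(f)=\tr_g\nabla\,\d f$: since the $G$-action preserves the metrics on $M$ and on $N$, it preserves the Levi-Civita connections, and since $f$ is equivariant, the trace transforms covariantly. With this in hand, the proof concludes as follows: take $V=\chi\cdot\tau(f)$ where $\chi\colon M\to\mathbb R_{\ge 0}$ is a compactly supported $G$-invariant smooth function. Then $V$ is a legitimate equivariant test section (the product of an invariant function with an equivariant section), and the hypothesis forces $\int_M\chi|\tau(f)|^2\,\d v_g=0$ for every admissible $\chi$, whence $\tau(f)\equiv 0$.

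The only substantive obstacle is ensuring an adequate supply of invariant cutoffs $\chi$; in full generality one needs either the action to be proper or one must average a non-equivariant $V$ by integrating $(g^{-1})_*V(g\cdot x)$ over $G$ against Haar measure. In our case $G=\mathbb S^1$ is compact, so both routes are trivially available, and invariant cutoffs correspond simply to compactly supported smooth functions on the orbit space $M/\mathbb S^1$. Once the theorem is in force, the harmonicity problem for $u_{a,b}^{p,q,r}$ reduces to verifying that the curve $\gamma_u$ in $\mathbb S_{\ge 0}^2$ satisfies the Euler--Lagrange equations of a one-dimensional variational problem, which is the whole point of invoking Theorem~\ref{th:harm-red} here.
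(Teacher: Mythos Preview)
The paper does not prove Theorem~\ref{th:harm-red}; it is quoted verbatim from Smith's thesis~\cite{Smith1972} and used as a black box. So there is no ``paper's own proof'' to compare against.

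Your argument is the standard one and is correct under the usual hypotheses (isometric actions, proper or compact $G$). You correctly isolate the two key points: equivariance of the tension field $\tau(f)$, and the availability of $G$-invariant cutoff functions (or, equivalently, the averaging trick). One small remark: the theorem as stated in the paper does not explicitly assume that $G$ acts by isometries, but this is of course needed for $\tau(f)$ to be equivariant and for the energy to be $G$-invariant in the first place; you implicitly use this and it is certainly the intended setting. Your final paragraph, explaining why the result reduces the harmonicity of $u_{a,b}^{p,q,r}$ to a one-dimensional ODE problem, is exactly how the paper uses the theorem.
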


The doubled energy density of \(u\) is given by
\begin{equation}\label{eq:harm-eu}
2e(u)=|\partial_x u|^2+|\partial_y u|^2=
(\phi')^2+(\theta')^2\cos^2\phi+((\alpha')^2+4\pi^2)\sin^2\phi,
\end{equation}
and the first equivariant variations of energy in \(\phi,\theta,\alpha\) are
given by
\begin{gather*}
\delta_\phi E(u)=-\phi''-(\theta')^2\sin\phi\cos\phi+
((\alpha')^2+4\pi^2)\sin\phi\cos\phi,\\
\delta_\theta E(u)=-(\theta'\cos^2\phi)',\qquad
\delta_\alpha E(u)=-(\alpha'\sin^2\phi)'.
\end{gather*}
By Theorem~\ref{th:harm-red}, we have
\begin{gather}
-\phi''-c^2\frac{\sin\phi}{\cos^3\phi}+d^2\frac{\cos\phi}{\sin^3\phi}+
4\pi^2\sin\phi\cos\phi=0,\label{eq:harm-phieq}\\
\theta'=\frac{c}{\cos^2\phi},\qquad
\alpha'=\frac{d}{\sin^2\phi}.\label{eq:harm-thaleq}
\end{gather}
for some constants \(c,d\in\mathbb R\). Multiplying~\eqref{eq:harm-phieq} by
\(\phi'\) and integrating, we obtain
\[
\phi'=\pm\sqrt{-\frac{c^2}{\cos^2\phi}-\frac{d^2}{\sin^2\phi}-
4\pi^2\cos^2\phi+A},
\]
where \(A\in\mathbb R\) is another constant. Finally, after the change of
variable \(\tau:=\cos^2\phi\) we arrive at
\begin{equation}\label{eq:harm-taueq}
\tau'=\mp 4\pi\sqrt{P(\tau)},
\end{equation}
where
\begin{equation}\label{eq:harm-P}
4\pi^2 P(t)=4\pi^2 t^3-(A+4\pi^2)t^2+(c^2-d^2+A)t-c^2.
\end{equation}

Let \(\tau_1,\tau_2\in (0,1)\) be the minimal and the maximal values of \(\tau\)
respectively. Note that the shifts \((x,y)\mapsto
(x+x_0,y+y_0),x_0,y_0\in\mathbb R\) give rise to \(\mathbb S^1\)-equivariant
conformal diffeomorphisms of \((\mathbb T^2,[g_{a,b}])\). Replacing \(u\) by its
precomposition with an appropriate conformal diffeomorphism of this kind, we may
assume that \(\phi(0)=\tau_1\) and \(\alpha(0)=0\). Similarly, the rotations
\((\phi,\theta)\mapsto (\phi,\theta+\theta_0),\theta_0\in\mathbb R\) and the
mirror symmetry \((\phi,\theta)\mapsto (\phi,-\theta)\) of the orbit space
\(\mathbb S_{\ge 0}^2\) give rise to \(\mathbb S^1\)-equivariant isometries of
\(\mathbb S^3\). Replacing \(u\) by its postcomposition with an appropriate
isometry of this kind, we may assume that \(\theta(0)=0\) and \(c\ge 0\) (in
particular, \(p\ge 0\)). After these modifications, our goal becomes just to
show that either \(u=u_{a,b}^{p,q,r}\), where \(q\in\mathbb N\) and \(p,q,r\)
satisfy~\eqref{eq:intro-pqrineq}, or \(u=u_{a,b}^{p,r;\phi_0}\), where
\(\phi_0\in\left[0,\frac{\pi}2\right]\) and \(p,r\)
satisfy~\eqref{eq:intro-preq}.

Suppose that \(\tau_1=\tau_2\), i.e. \(\tau\equiv\const\). Then
using~\eqref{eq:harm-thaleq} and \(\theta(0)=\alpha(0)=0\), we obtain
\(\theta(y)=\frac{c}{\tau_1}y\) and \(\alpha(y)=\frac{d}{1-\tau_1}y\). It
follows from~\eqref{eq:harm-thper} and~\eqref{eq:harm-alper} that
\(\frac{c}{\tau_1}=\frac{2\pi p}b\) and
\(\frac{d}{1-\tau_1}=-\frac{2\pi(r+a)}b\). We have \(u=u_{a,b}^{p,r;\phi_0}\),
where \(\phi_0=\arccos\sqrt{\tau_1}\), as desired.

Now suppose that \(\tau_1\ne\tau_2\). Then \(\tau_1,\tau_2\) are roots of the
cubic polynomial \(P(t)\). Since both roots are real, the third root \(\tau_3\)
is real as well. Since
\begin{equation}\label{eq:harm-cd}
\tau_1\tau_2\tau_3=-P(0)=\frac{c^2}{4\pi^2}\ge 0,\qquad
(1-\tau_1)(1-\tau_2)(1-\tau_3)=P(1)=-\frac{d^2}{4\pi^2}\le 0,
\end{equation}
it follows that \(\tau_3\ge 1\). It is clear that the function \(\phi(0)\)
increases from the value \(\tau_1\) at \(y=0\) until it achieves the value
\(\tau_2\) at some \(y=b_0\), then it decreases until it achieves the value
\(\tau_1\) at \(y=2b_0\) etc. Since \(\phi\) and \(\tau\) are \(b\)-periodic, we
have \(b_0=\frac{b}{2q}\) for some \(q\in\mathbb N\) and \(\frac{b}q\) is the
minimal period of \(\phi\). Now separating the variables
in~\eqref{eq:harm-taueq} and applying~\cite[233.00]{BF1971}, we obtain
\[
y=\int_{\tau_1}^{\tau(y)}\frac{\d t}{4\pi\sqrt{P(t)}}=
\frac{1}{2\pi\sqrt{\tau_3-\tau_1}}\sn^{-1}
\left(\sqrt{\frac{\tau-\tau_1}{\tau_2-\tau_1}}\bigmid
\frac{\tau_2-\tau_1}{\tau_3-\tau_1}\right),\qquad
\tau\in [\tau_1,\tau_2],\quad y\in [0,b_0],
\]
and it follows that \(\phi\) is given by~\eqref{eq:intro-phigen}. Substituting
\(y=b_0\), we get~\eqref{eq:intro-taub}.
Using~\eqref{eq:harm-thaleq},~\eqref{eq:harm-cd}, and \(\theta'\ge 0\), we
obtain
\[
\theta(y)=\int_{\tau_1}^{\tau(y)}\frac{\sqrt{\tau_1\tau_2\tau_3}\,\d
t}{2t\sqrt{P(t)}},\quad
\alpha(y)=-\sgn (r+a)\int_{\tau_1}^{\tau(y)}\frac{\sqrt{(1-\tau_1)(1-\tau_2)(\tau_3-1)}
\,\d t}{2(1-t)\sqrt{P(t)}}.
\]
Applying~\cite[233.02]{BF1971}, we obtain that \(\theta\) and \(\alpha\) are
given by~\eqref{eq:intro-theta} and~\eqref{eq:intro-alpha} respectively, and
substituting \(y=b_0\), we get~\eqref{eq:intro-taup},\eqref{eq:intro-taur}. It
follows that \(u=u_{a,b}^{p,q,r}\), where \(p,q,r\)
satisfy~\eqref{eq:intro-pqrineq} by second part of
Proposition~\ref{pr:intro-taus}.

\begin{figure}
\begin{minipage}{0.3\linewidth}
\includegraphics[width=\linewidth]{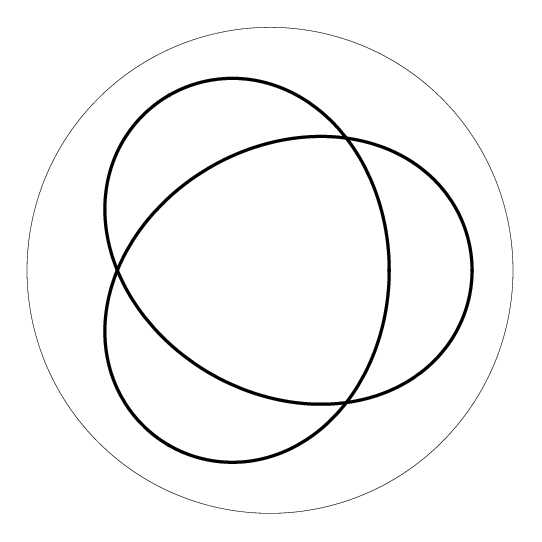}
\end{minipage}
\begin{minipage}{0.3\linewidth}
\includegraphics[width=\linewidth]{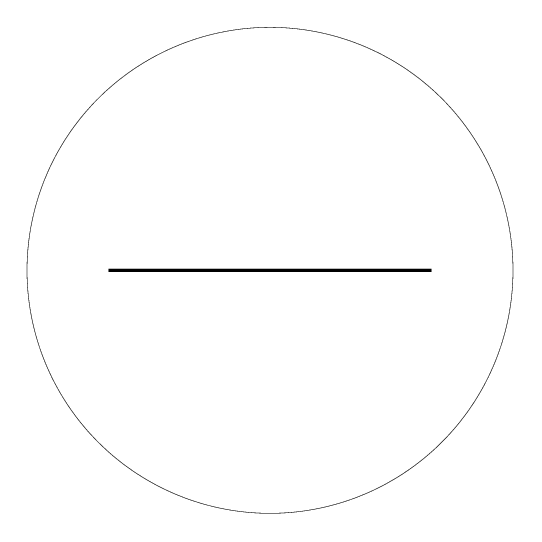}
\end{minipage}
\begin{minipage}{0.3\linewidth}
\includegraphics[width=\linewidth]{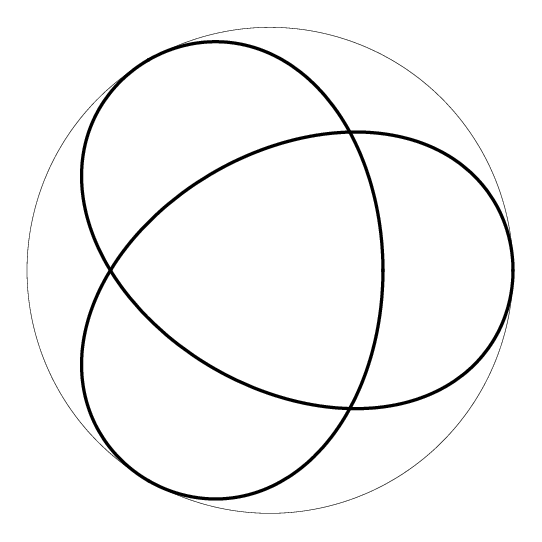}
\end{minipage}
\caption{Examples of the ``top view'' of the orbit space \(\mathbb S_{\ge 0}^2\)
together with the curve \(\gamma_u\). \emph{Left:} the nonlimit case
\(a=0.25,b=2.1,p=2,q=3,r=0;\) \emph{Middle:} the first limit case
\(a=0.25,b=1.25,p=1,q=2,r=0;\) \emph{Right:} the second limit case
\(a=0.5,b=2,p=2,q=3,r=1.\)}
\end{figure}

Now suppose that \(N\in\gamma_u\) but still \(\gamma_u\cap\partial\mathbb S_{\ge
0}^2=\emptyset\). It is easy to see that in this case we have \(c=0\) in the
system~(\ref{eq:harm-phieq},\ref{eq:harm-thaleq}), and for any branch of
solutions at \(N\) we have \(\theta(y)\equiv\theta_0\), where
\(\theta_0\in\mathbb R\) completely characterizes the branch. Moreover, for the
map \(\tilde u\) to be smooth, the branches corresponding to
\(\theta_0,\theta_1\in\mathbb R\) can be glued if and only if
\(\theta_0-\theta_1\in\pi+2\pi\mathbb Z\). Doing this gluing each time
\(\gamma_u\) passes through \(N\), we can express the result as a single
solution with \(\theta\equiv\const\) and \(\phi\) taking values in \((0,\pi)\)
instead of \(\bigl(0,\frac{\pi}2\bigr)\). Note that \(0=:\tau_1\) is a root of
the polynomial \(P(t)\) in this case. If \(\tau_2,\tau_3,q,r\) are as in the
previous paragraph, then the same argument shows (again, possibly after
replacing \(u\) with its pre- and post- compositions with appropriate conformal
diffeomorphism and \(\mathbb S^1\)-equivariant isometry respectively) that
\(\phi,\alpha\) are given by~\eqref{eq:intro-philim1} and~\eqref{eq:intro-alpha}
respectively and~\eqref{eq:intro-taub},\eqref{eq:intro-taur} are satisfied.
For~\eqref{eq:intro-taup}, using~\eqref{eq:taus-bf} and~\eqref{eq:taus-nlim} we
have
\begin{multline}\label{eq:harm-limint}
\lim_{\tau\to 0}\int_\tau^{\tau_2}
\frac{\sqrt{\tau\tau_2\tau_3}\,\d t}{t\sqrt{(t-\tau)(\tau_2-t)(\tau_3-t)}}=
\lim_{\tau\to 0}2\sqrt{\frac{\tau_2\tau_3}{\tau(\tau_3-\tau)}}
\Pi\left(-\frac{\tau_2-\tau}\tau\bigmid
\frac{\tau_2-\tau}{\tau_3-\tau}\right)=\\
\lim_{\tau\to 0}\sqrt{\frac{\tau_2\tau_3}{\tau(\tau_3-\tau)}}\cdot
\frac{\pi}{\sqrt{\frac{\tau_2-\tau}\tau}}=\pi,
\end{multline}
which proves~\eqref{eq:intro-taup} with \(p:=\frac{q}2\) (note that we modified
\(\phi\) so that it is now \(\frac{b}q\)-antiperiodic and \(q\) is always even
in this case). It follows that \(u=u_{a,b}^{p,q,r}\), where \(p,q,r\)
satisfy~\eqref{eq:intro-pqrineq} by second part of
Proposition~\ref{pr:intro-taus}.

Case \(\gamma_u\cap\partial\mathbb S_{\ge 0}^2\ne\emptyset,N\notin\gamma_u\) is
similar but \(\alpha\) plays the role of \(\theta\). Let \(M\in\partial\mathbb
S_{\ge 0}^2\) be any point. We have \(d=0\) in the
system~(\ref{eq:harm-phieq},\ref{eq:harm-thaleq}), and for any branch of
solutions at \(M\) we have \(\alpha(y)\equiv\alpha_0\), where
\(\alpha_0\in\mathbb R\) completely characterizes the branch. Moreover, for the
map \(\tilde u\) to be smooth, the branches corresponding to
\(\alpha_0,\alpha_1\in\mathbb R\) can be glued if and only if
\(\alpha_0-\alpha_1\in\pi+2\pi\mathbb Z\). Doing this gluing any time
\(\gamma_u\) intersects \(\partial\mathbb S_{\ge 0}^2\), we can express the
result as a single solution with \(\alpha\equiv\const\) and \(\phi\) taking
values in \(\bigl(-\frac{\pi}2,\frac{\pi}2\bigr)\) instead of
\(\bigl(0,\frac{\pi}2\bigr)\). Note that \(1=:\tau_2\) is a root of the
polynomial \(P(t)\) in this case. If \(\tau_1,\tau_3,p,q\) are as in the
nonlimit case, then the same argument shows that (again, possibly after
replacing \(u\) with its pre- and post- compositions with appropriate conformal
diffeomorphism and \(\mathbb S^1\)-equivariant isometry respectively)
\(\phi,\theta\) are given by~\eqref{eq:intro-philim2} and~\eqref{eq:intro-theta}
respectively and~\eqref{eq:intro-taub},\eqref{eq:intro-taup} are satisfied.
For~\eqref{eq:intro-taur}, we first note that the \((a,b)\)-periodicity of
\(\tilde u\) and \(\alpha\equiv\const\) imply that \(a\in\mathbb Z\) if \(q\) is
even and \(a\notin\mathbb Z,2a\in\mathbb Z\) if \(q\) is odd.
Then~\eqref{eq:intro-taur} with \(r:=\pm\frac{q}2-a\) can be proved in the same
way as~\eqref{eq:harm-limint}. It follows that \(u=u_{a,b}^{p,q,r}\), where
\(p,q,r\) satisfy~\eqref{eq:intro-pqrineq} by second part of
Proposition~\ref{pr:intro-taus}.

Finally, the ``hybrid'' case \(\gamma_u\cap\partial\mathbb S_{\ge
0}^2\ne\emptyset,N\in\gamma_u\) is similar to the cases above. The details are
left to the reader. We only notice that in this case we cannot glue the branches
of solutions to make \(\phi\) a periodic function but we can make it
quasiperiodic, see~\eqref{eq:intro-philim12}.
\end{proof}

\section{The spectral index of
\texorpdfstring{\(u_{a,b}^{p,q,r}\)}{u\_\{a,b\}\^{}\{p,q,r\}}}\label{sec:weyl}

It follows from~\eqref{eq:harm-P} that
\begin{equation}\label{eq:weyl-A}
A=4\pi^2(\tau_1+\tau_2+\tau_3-1).
\end{equation}
Using this and~\eqref{eq:harm-eu}--\eqref{eq:harm-thaleq},\eqref{eq:harm-cd}, we
see that energy density of the map \(u_{a,b}^{p,q,r}\) rewrites as
\[
e(u_{a,b}^{p,q,r})=\frac{1}2((\phi')^2+((\alpha')^2+4\pi^2)\sin^2\phi+
(\theta')^2\cos^2\phi)=
2\pi^2(-\cos 2\phi+\tau_1+\tau_2+\tau_3-1)=:\rho(y).
\]
In this section we obtain the lower bound~\eqref{eq:intro-N2ineq} on the number
\(N(2)\) for the metric \(g_{a,b}^{p,q,r}\) and prove the sufficient
conditions~\eqref{eq:intro-N2eqsc} and~\eqref{eq:intro-N2eqscsc} for this lower
bound to attain.

The Laplace eigenvalue problem for the metric \(g_{a,b}^{p,q,r}\) is
\begin{equation}\label{eq:weyl-lap}
-(\partial_x^2+\partial_y^2)f=\lambda\rho f,
\end{equation}
where it is convenient to assume that \(f\) is a \emph{complex} valued smooth
function on \(\mathbb R^2/\Gamma_{a,b}\). The following proposition is proved by
the standard separation of variables.

\begin{proposition}\label{pr:weyl-sep}
The (complex) \(\lambda\)-eigenspace of the problem~\eqref{eq:weyl-lap} has a
basis consisting of the functions of the form
\begin{equation}\label{eq:weyl-eigenfun}
h(y;l)e^{2\pi ilx},\qquad l=0,1,\ldots,
\end{equation}
where \(h(y;l)\) solves the following Sturm-Liouville quasiperiodic problem
\begin{gather}
-h''(y)+4\pi^2 l^2 h(y)=\lambda\rho(y)h(y),\label{eq:weyl-sleq}\\
h(y+b)\equiv e^{-2\pi ila}h(y).\qed\label{eq:weyl-slbc}
\end{gather}
\end{proposition}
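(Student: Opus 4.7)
My plan is to apply the standard separation-of-variables argument, adapted to the quasi-periodic boundary conditions forced by the non-rectangular lattice $\Gamma_{a,b}$. The argument splits naturally into three steps: Fourier-expand $f$ in $x$, extract the quasi-periodicity in $y$ from the second lattice generator, and substitute into the eigenvalue equation.

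First, smoothness of $f\colon\mathbb R^2\to\mathbb C$ together with the translation $(1,0)\in\Gamma_{a,b}$ gives $f(x+1,y)=f(x,y)$, so $f$ admits a (smooth, rapidly converging) Fourier expansion in the $x$-variable,
\[
f(x,y)=\sum_{l\in\mathbb Z}h_l(y)\,e^{2\pi ilx},\qquad h_l\in C^\infty(\mathbb R).
\]
Applying the second generator $(a,b)$ yields $f(x+a,y+b)=f(x,y)$; equating Fourier coefficients in $x$ on both sides gives $h_l(y+b)\,e^{2\pi ila}=h_l(y)$, which is precisely the quasi-periodic condition~\eqref{eq:weyl-slbc}.

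Second, I substitute the expansion into~\eqref{eq:weyl-lap}. Since $\partial_x^2 e^{2\pi ilx}=-4\pi^2 l^2 e^{2\pi ilx}$ and the weight $\rho(y)$ depends only on $y$ and therefore does not mix distinct Fourier modes in $x$, orthogonality of $\{e^{2\pi ilx}\}$ in $L^2(\mathbb R/\mathbb Z)$ immediately gives, for each $l\in\mathbb Z$,
\[
-h_l''(y)+4\pi^2 l^2 h_l(y)=\lambda\rho(y)h_l(y),
\]
which is~\eqref{eq:weyl-sleq}. Conversely, any product $h(y;l)e^{2\pi ilx}$ in which $h(\cdot;l)$ solves the Sturm--Liouville problem~(\ref{eq:weyl-sleq},\ref{eq:weyl-slbc}) descends to a $\lambda$-eigenfunction of~\eqref{eq:weyl-lap} on $\mathbb R^2/\Gamma_{a,b}$.

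Combining these two steps exhibits the $\lambda$-eigenspace as a direct sum, over $l$, of the finite-dimensional $\lambda$-eigenspaces of the Sturm--Liouville problem, so a basis consists of the separated functions in~\eqref{eq:weyl-eigenfun}; reducing the indexing to $l\ge 0$ uses that complex conjugation sends a solution for $l$ to one for $-l$ (same ODE, conjugate quasi-periodicity), so negative indices produce no genuinely new basis elements. There is no real obstacle here -- the argument is entirely routine -- the only minor point being that the $C^\infty$ assumption on $f$ guarantees rapid decay of the $h_l$, so termwise differentiation and the use of orthogonality are fully justified.
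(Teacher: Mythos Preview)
Your proposal is correct and is exactly the standard separation-of-variables argument the paper has in mind; the paper itself gives no proof beyond the phrase ``proved by the standard separation of variables'' and the \(\qed\) in the statement. Your treatment of the quasi-periodicity from the second lattice generator and your remark on the \(l\leftrightarrow -l\) conjugation (which is why the paper later doubles the count for \(l\ge 1\) when computing \(N(2)\)) are precisely the points one needs to spell out.
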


In general, the eigenvalues of the
problem~(\ref{eq:weyl-sleq},\ref{eq:weyl-slbc}) form a sequence
\[
\lambda_0(l)\le\lambda_1(l)\le\ldots\le\lambda_j(l)\le\ldots\nearrow +\infty.
\]
However, the description of the spectrum can be made more precise if \(l\) is
known. In the following proposition, the eigenfunction corresponding to the
eigenvalue \(\lambda_j(l),j\ge 0\) is denoted by \(h_j\).

\begin{proposition}\label{pr:weyl-sturm}
\begin{enumerate}[label=(\roman*)]
\item
Suppose that \(la\in\mathbb Z\), i.e.~\eqref{eq:weyl-slbc} is the
periodic boundary condition. Then
\[
\lambda_0(l)<\lambda_1(l)\le\lambda_2(l)<\lambda_3(l)\le\lambda_4(l)<\ldots
<\lambda_{2i-1}(l)\le\lambda_{2i}(l)<\ldots\nearrow +\infty.
\]
Moreover, all the eigenfunctions \(h_j,j\ge 0\) can be taken to be real-valued,
the eigenfunction \(h_0\) does not vanish on \([0,b)\), and for each
\(i>0\) each of the eigenfunctions \(h_{2i-1}\) and \(h_{2i}\) has exactly
\(2i\) zeros on \([0,b)\).\label{it:weyl-per}

\item
Suppose that \(la\notin\mathbb Z,2la\in\mathbb Z\), i.e.~\eqref{eq:weyl-slbc} is
the antiperiodic boundary condition. Then
\[
\lambda_0(l)\le\lambda_1(l)<\lambda_2(l)\le\lambda_3(l)<\ldots
<\lambda_{2i}(l)\le\lambda_{2i+1}(l)<\ldots\nearrow +\infty.
\]
Moreover, all the eigenfunctions \(h_j,j\ge 0\) can be taken to be real-valued,
and for each \(i\ge 0\) each of the eigenfunctions \(h_{2i}\) and \(h_{2i+1}\)
has exactly \(2i+1\) zeros on \([0,b)\).\label{it:weyl-aper}

\item
Suppose that \(2la\notin\mathbb Z\). Then
\[
\lambda_0(l)<\lambda_1(l)<\ldots<\lambda_j(l)<\ldots\nearrow +\infty.
\]
Moreover, for each \(j\ge 0\) and \(y_0\in\mathbb R\), both the real and the
imaginary parts of the eigenfunction \(h_j\) have either \(j\) or \(j+1\) zeroes
on \((y_0,y_0+b)\).\label{it:weyl-qper}
\end{enumerate}
\end{proposition}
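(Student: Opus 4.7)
The plan is to recognize~\eqref{eq:weyl-sleq} as a Hill-type equation with positive continuous weight~$\rho$ and apply classical Floquet theory. Let $\psi_1(y;\lambda),\psi_2(y;\lambda)$ be the real fundamental solutions with $\psi_1(0)=\psi_2'(0)=1$ and $\psi_1'(0)=\psi_2(0)=0$, and let $M(\lambda)$ denote the associated monodromy matrix obtained by evaluating $(\psi_i,\psi_i')$ at $y=b$. Since $\det M(\lambda)=1$ by constancy of the Wronskian, its spectrum is $\{\mu,\mu^{-1}\}$ with $\mu+\mu^{-1}=D(\lambda):=\tr M(\lambda)$, and~\eqref{eq:weyl-slbc} admits a nontrivial solution iff $e^{-2\pi ila}$ is an eigenvalue of $M(\lambda)$, equivalently
\[
D(\lambda)=2\cos(2\pi la).
\]

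For parts~\ref{it:weyl-per} and~\ref{it:weyl-aper} this reduces to $D(\lambda)=\pm 2$, i.e.\ the classical periodic and antiperiodic spectra of Hill's equation. The alternation patterns and the zero counts on $[0,b)$ stated in the proposition are then Haupt's oscillation theorem (see any standard text on Hill's equation, e.g.\ Magnus--Winkler). Realness of the eigenfunctions follows because the ODE and the real boundary conditions are invariant under complex conjugation: if $h=u+iv$ is an eigenfunction, then $\bar h$ is an eigenfunction for the same eigenvalue, hence so are $u,v$, and at least one of them is nontrivial.

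For part~\ref{it:weyl-qper}, the hypothesis $2la\notin\mathbb Z$ makes $e^{\pm 2\pi ila}$ distinct nonreal complex conjugates. Thus $M(\lambda)$ cannot be scalar (such a scalar would have to equal both $e^{\pm 2\pi ila}$ while satisfying $\det M=1$), so the $e^{-2\pi ila}$-eigenspace of $M(\lambda)$ is one-dimensional and every $\lambda_j(l)$ is simple. For the zero-counting claim I would pass to Pr\"ufer form $h=Re^{i\Phi}$ with $R>0$: the Wronskian $h\bar h'-h'\bar h=-2iR^2\Phi'$ between $h$ and $\bar h$ (both solutions of the same real ODE) is constant and nonzero, as a vanishing constant would force $\Phi\equiv\const$ and make $h$ equal to a phase times a real solution, contradicting $2la\notin\mathbb Z$. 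Hence $\Phi$ is strictly monotone, the zeros of $\Re h$ occur at $\Phi\equiv\pi/2\pmod{\pi}$ and those of $\Im h$ at $\Phi\equiv 0\pmod{\pi}$, and they strictly interlace, so they differ in count by at most one on any interval of length~$b$.

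The delicate step — and the main obstacle — is to identify the count with $j$ or $j+1$. My plan is to exploit that, as $\lambda$ traverses the $j$-th stability band $|D(\lambda)|<2$, Haupt's theorem implies $D$ is strictly monotone there, so the total winding $\Phi(y_0+b)-\Phi(y_0)$ of any corresponding eigenfunction varies continuously and monotonically across an interval of length~$\pi$. The endpoints of the band are a consecutive periodic/antiperiodic eigenvalue pair whose real eigenfunctions have $j$ and $j+1$ zeros on $[0,b)$ respectively (by parts~\ref{it:weyl-per} and~\ref{it:weyl-aper}); continuity of the winding along the band then pins the count for the unique quasiperiodic eigenfunction inside it down to either $j$ or $j+1$, as claimed.
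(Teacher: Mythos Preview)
For parts~(i) and~(ii) you and the paper agree: both defer to classical periodic/antiperiodic Sturm--Liouville oscillation theory (the paper cites Coddington--Levinson and Eastham).

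For part~(iii) the approaches differ. The paper fixes \(y_0\), introduces the Dirichlet eigenvalues \(\mu_j\) of the same equation on \((y_0,y_0+b)\), and invokes the strict interlacing \(\lambda_0<\mu_0<\lambda_1<\mu_1<\cdots\) from Eastham; this already gives simplicity. Since \(\Re h_j\) and \(\Im h_j\) are real solutions at parameter \(\lambda_j\in(\mu_{j-1},\mu_j)\), and the Dirichlet eigenfunction at \(\mu_k\) has exactly \(k\) interior zeros, Sturm comparison against \(\mu_{j-1}\) and \(\mu_j\) sandwiches the zero count between \(j\) and \(j+1\). That is the entire argument.

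Your Floquet/winding route is a legitimate alternative for simplicity and for the interlacing of zeros of \(\Re h_j\) and \(\Im h_j\). The gap is in the last step. Your polar form \(h=Re^{i\Phi}\) with \(R>0\) exists only strictly inside the band; at a band edge the Floquet solution is real with zeros, so \(R\) vanishes and \(\Phi\) is undefined there. Hence ``continuity of the winding to the band endpoints'' is not available as stated, and you have no direct mechanism to identify the limiting winding with \(j\pi\) and \((j+1)\pi\) just from the zero counts in parts~(i)--(ii). What does extend continuously across the edges is the \emph{real} Pr\"ufer rotation number, which equals \(j\pi\) per period on the closure of the \(j\)-th gap; but that is separate machinery from your complex \(\Phi\), and identifying the two windings on the open band is itself a lemma you have not supplied. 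The paper's Dirichlet-interlacing argument sidesteps all of this.
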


\begin{proof}
Parts~\ref{it:weyl-per} and~\ref{it:weyl-aper} are versions of the
Sturm-Liouville oscillation theorem and are well-known (see
e.g.~\cite[Chapter~8]{CL1955} and~\cite[Theorem~3.1.2]{Eas1973}).
Concerning~\ref{it:weyl-qper}, consider the Dirichlet eigenvalue problem on
\((y_0,y_0+b)\)
\begin{equation}\label{eq:weyl-sldir}
-h''(y)+4\pi^2 l^2 h(y)=\mu\rho(y)h(y),\qquad
h(y_0)=h(y_0+b)=0
\end{equation}
with eigenvalues
\[
\mu_0(l)<\mu_1(l)<\ldots<\mu_j(l)<\ldots\nearrow +\infty.
\]
Then it is proved in~\cite[Sec.~3.1]{Eas1973} that
\[
\lambda_0(l)<\mu_0(l)<\lambda_1(l)<\mu_1(l)<\ldots<\mu_{j-1}(l)<\lambda_j(l)<
\mu_j(l)<\ldots.
\]
(See also~\cite{EKWZ1997} for a more general statement.) Take some \(j\ge 0\).
Since the coefficients of the equation~\eqref{eq:weyl-sleq} are real, both
\(\Re h_j\) and \(\Im h_j\) are real valued functions
solving~\eqref{eq:weyl-sleq} with \(\lambda=\lambda_j(l)\) (of course, \(\Re
h_j\) and \(\Im h_j\) do not satisfy boundary conditions~\eqref{eq:weyl-slbc}
but it does not matter here). By the classical Sturm oscillation theorem, the
eigenfunction of~\eqref{eq:weyl-sldir} with eigenvalue \(\mu_j(l)\) has exactly
\(j\) zeroes on \((y_0,y_0+b)\). The claim now follows from the Sturm separation
theorem.
\end{proof}

\begin{proposition}\label{pr:weyl-leq0}
We have \(\lambda_{2p-1}(0)=\lambda_{2p}(0)=2\) if \(\frac{p}q\ne\frac{1}2\) and
\(\lambda_{2p-1}(0)<\lambda_{2p}(0)=2\) otherwise.
\end{proposition}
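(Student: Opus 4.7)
The plan is to produce eigenfunctions with eigenvalue $2$ in the $l=0$ sector from the harmonic map \(u_{a,b}^{p,q,r}\), count their zeros, and locate them in the spectrum using the periodic Sturm oscillation theorem (Proposition~\ref{pr:weyl-sturm}). Since \(u_{a,b}^{p,q,r}\colon(\mathbb T^2,g_{a,b}^{p,q,r})\to\mathbb S^3\subset\mathbb R^4\) is harmonic, the identity \(\Delta_{g_{a,b}^{p,q,r}}u=2u\) holds componentwise. The two components that are independent of $x$, namely \(v_1(y)=\cos\phi\cos\theta\) and \(v_2(y)=\cos\phi\sin\theta\), lie in the \(l=0\) sector of Proposition~\ref{pr:weyl-sep} as eigenfunctions with eigenvalue $2$.

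In the nonlimit case \(p/q>1/2\) we have \(c>0\), hence \(\theta'=c/\cos^2\phi>0\) with \(\cos\phi>0\), and \(\theta\) increases monotonically by \(2\pi p\) over one period. Therefore \(v_1,v_2\) are linearly independent and each has exactly \(2p\) zeros on \([0,b)\). Since eigenfunctions with \(2p\) zeros occupy positions \(2p-1\) and \(2p\) by Proposition~\ref{pr:weyl-sturm}, a two-dimensional eigenspace forces \(\lambda_{2p-1}(0)=\lambda_{2p}(0)=2\).

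In the limit case \(p/q=1/2\) we have \(c=0\), so \(\theta\equiv\mathrm{const}\) and, after postcomposing with a suitable equivariant isometry, \(v_2\equiv 0\) while \(v_1=\cos\phi=\sqrt{\tau_2}\sn(2\pi\sqrt{\tau_3}y\mid m)\). Since \(\sn(\cdot+2K(m))=-\sn(\cdot)\), the function \(\cos\phi\) is \(b/q\)-antiperiodic and has \(q=2p\) zeros on \([0,b)\), placing \(\lambda=2\in\{\lambda_{2p-1}(0),\lambda_{2p}(0)\}\). To prove simplicity I would pass to the variable \(z=2\pi\sqrt{\tau_3}y\), in which the equation becomes the Lamé equation \(h''+(1+m-2m\sn^2)h=0\), and apply reduction of order from \(h_1=\sn z\) to produce the second independent solution \(h_2(z)=-\cn z\,\mathrm{dn}\,z+(z-\epsilon(z))\sn z\), where \(\epsilon(z)=\int_0^z \mathrm{dn}^2(u\mid m)\,du\). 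Using \(\epsilon(z+2K)=\epsilon(z)+2E(m)\), a short calculation yields
\[
h_2(z+2K)=-h_2(z)+2(E(m)-K(m))\sn z,
\]
and since \(E(m)<K(m)\) for \(m\in(0,1)\) the monodromy \(T_2^{(b/q)}\) is a genuine Jordan block with eigenvalue \(-1\); this already implies that \(\lambda=2\) is a simple eigenvalue of the \(l=0\) problem on \([0,b)\).

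The main obstacle is to decide whether the simple eigenvalue sits at position \(2p-1\) or \(2p\); only the latter agrees with the stated strict inequality. I would resolve this via the sign of the Hill discriminant \(\Delta(\lambda)=\tr T_\lambda^{(b/q)}\) at \(\lambda=2\). The Jordan-block computation gives the standard-basis values \(\psi_1(T)=-1\), \(\psi_2(T)=0\), \(\psi_1'(T)=2(K-E)\), \(\psi_2'(T)=-1\) in the \(z\)-variable; combined with the variation-of-parameters formula for \(\partial_\lambda\psi_i\), the cross terms involving \(\int\sn\,h_2\,\tilde\rho\,dz\) cancel, leaving
\[
\Delta'(2)=2(K(m)-E(m))\int_0^{2K(m)}\sn^2(z\mid m)\,\tilde\rho(z)\,dz>0,
\]
where \(\tilde\rho(z)=\tfrac{1+m}{2}-m\sn^2(z)>0\) is the transformed weight. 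A positive \(\Delta'(2)\) identifies \(\lambda=2\) with the upper endpoint \(\nu_1\) of the first Floquet gap of the period-\(b/q\) problem; decomposing the period-\(b\) periodic spectrum into the \(q=2p\) Floquet phases \(2\pi j/q\), the lower edge \(\nu_0<2\) occupies position \(2p-1\) while \(\nu_1=2\) occupies position \(2p\), yielding \(\lambda_{2p-1}(0)<\lambda_{2p}(0)=2\).
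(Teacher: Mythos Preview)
Your nonlimit argument is identical to the paper's. In the limit case \(p/q=1/2\) you take a genuinely different route, and it is correct. The paper reduces to the same auxiliary antiperiodic problem on \([0,b/q)\) (its \(\lambda_0',\lambda_1'\) are your \(\nu_0,\nu_1\)) but then argues variationally: since \(\eta(y)=\cos\phi(y)\) is increasing on \([0,b_0]\) while \(\rho\) is decreasing there, plugging the shifted test function \(\eta(y+b_0)\) into the Rayleigh quotient and invoking a rearrangement inequality gives \(\lambda_0'<2\) directly. Both proofs finish with the same Floquet decomposition of the period-\(b\) periodic spectrum into phases \(2\pi j/q\) (which the paper outsources to \cite{Kar2014,Mor2024}).

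Your approach exploits that the equation is the \(\nu=1\) Lam\'e equation: the explicit second solution \(h_2=-\cn\,\dn+(z-\epsilon(z))\sn\) yields the Jordan block (hence simplicity) and, via the standard variation-of-parameters formula for \(\Delta'(\lambda)\), the sign \(\Delta'(2)=2(K-E)\int_0^{2K}\sn^2\tilde\rho>0\), pinning \(\lambda=2\) at \(\nu_1\) rather than \(\nu_0\). This is more self-contained and would transfer to any Lam\'e-type reduction where the second solution is known, whereas the paper's rearrangement trick is softer but hinges on the specific opposite monotonicity of \(\eta\) and \(\rho\) on a half-period. Either way, the remaining counting---\(1+2(p-1)+1=2p\) eigenvalues below \(\nu_1\) coming from \(\mu_0\), the interior phases \(j=1,\dots,p-1\) (each double), and \(\nu_0\)---is the same, and your final sentence, while terse, captures it correctly.
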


\begin{proof}
We start with the nonlimit case \(\frac{p}q\ne\frac{1}2\). Recall that the
components of the map \(u_{a,b}^{p,q,r}\) are eigenfunctions of the Laplacian
with eigenvalue~2. Then the functions \(\cos\phi\cos\theta\) and
\(\cos\phi\sin\theta\) solve the problem~(\ref{eq:weyl-sleq},\ref{eq:weyl-slbc})
with \(l=0\) and \(\lambda=2\). It is easy to see that both functions have
exactly \(2p\) zeroes on \([0,b)\). By
Proposition~\ref{pr:weyl-sturm}\ref{it:weyl-per} we have
\(\lambda_{2p-1}(0)=\lambda_{2p}(0)=2\).

The limit case \(\frac{p}q=\frac{1}2\) is more subtle. The problem is that
instead of two eigenfunctions \(\cos\phi\cos\theta\) and \(\cos\phi\sin\theta\)
we now have only one eigenfunction \(\cos\phi\). Since this function has exactly
\(2p\) zeroes on \([0,b)\), by Proposition~\ref{pr:weyl-sturm}\ref{it:weyl-per}
we have either \(\lambda_{2p-1}(0)=2\) or
\(\lambda_{2p-1}(0)<\lambda_{2p}(0)=2\). To show that in fact the second
possibility takes place, we use the same idea as in~\cite[sec.~3.3]{Kar2014}
combined with the trick used in~\cite[proof of Proposition~3.12]{Mor2024}.
Consider the following antiperiodic Sturm-Liouville problem
\begin{equation}\label{eq:weyl-slaux1}
-h''(y)=\lambda\rho(y)h(y),\qquad h(y+2b_0)\equiv -h(y),
\end{equation}
where \(b_0:=\frac{b}{2q}\). Let \(\lambda_0'\le\lambda_1'\) be the first two
eigenvalues of the problem~\eqref{eq:weyl-slaux1}. The function
\(\eta(y):=\cos\phi(y)\) solves this problem with \(\lambda=2\) and has exactly
one zero \(y=0\) on \([0,2b_0)\). It follows from the antiperiodic version of
Sturm oscillation theorem that either \(\lambda_0'=2\) or \(\lambda_1'=2\).
Notice that \(\eta(y)\) increases on \([0,b_0]\) while \(\rho(y)\) decreases on
the same interval. Also, both functions are invariant under the transformation
\(y\mapsto 2b_0-y\). Using \(\eta(y+b_0)\) as a test function for
\(\lambda_0'\), we find
\[
\lambda_0'\le
\frac{\int_0^{b_0}\eta'(y+b_0)^2\d y}{\int_0^{b_0}\rho(y)\eta(y+b_0)^2\d y}=
\frac{\int_0^{b_0}\eta'(y)^2\d y}{\int_0^{b_0}\rho(y)\eta(y+b_0)^2\d y}<
\frac{\int_0^{b_0}\eta'(y)^2\d y}{\int_0^{b_0}\rho(y)\eta(y)^2\d y}=2,
\]
where the second inequality follows from the continuous analogue of the
rearrangement inequality, see~\cite[Claim~3.13]{Mor2024}. Hence,
\(\lambda_0'<2\), and the same argument as in~\cite[Proposition~11]{Kar2014}
and~\cite[Proposition~3.15]{Mor2024} shows that
\(\lambda_0'=\lambda_{2p-1}(0),\lambda_1'=\lambda_{2p}(0)\), which concludes the
proof of \(\lambda_{2p-1}(0)<\lambda_{2p}(0)=2\).
\end{proof}

\begin{proposition}\label{pr:weyl-leq1}
Let \(j:=\#\{j'\ge 0\mid\lambda_{j'}(1)<2\}\). Then \(j=\lceil
2|r+a|-1\rceil+\delta_{r+a,0}\).
\end{proposition}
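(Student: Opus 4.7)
The plan is to exhibit an explicit eigenfunction of the $l=1$ Sturm-Liouville problem at $\lambda=2$ and locate it in the spectrum by counting zeros and applying Proposition~\ref{pr:weyl-sturm}, mimicking the strategy of Proposition~\ref{pr:weyl-leq0}. Viewing the second coordinate $\sin\phi(y)e^{i(2\pi x+\alpha(y))}$ of $\tilde u_{a,b}^{p,q,r}$ as a complex-valued Laplace eigenfunction of $g_{a,b}^{p,q,r}$ with eigenvalue $2$ (by harmonicity of $u_{a,b}^{p,q,r}$), Proposition~\ref{pr:weyl-sep} identifies
\[
h_+(y) := \sin\phi(y)\,e^{i\alpha(y)}
\]
as a solution of \eqref{eq:weyl-sleq} with $l=1$, $\lambda=2$, and the boundary condition \eqref{eq:weyl-slbc} follows from \eqref{eq:harm-alper} together with $r\in\mathbb Z$, since $h_+(y+b)=e^{-2\pi i(r+a)}h_+(y)=e^{-2\pi ia}h_+(y)$. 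If $r+a=0$ (which forces $d=0$, i.e.\ the third limit case), then $\alpha\equiv\const$ and $h_+\propto\sin\phi>0$ is nowhere vanishing, so Proposition~\ref{pr:weyl-sturm}\ref{it:weyl-per} identifies it as the ground state $h_0$, giving $j=0$. Otherwise $r+a\ne 0$, and in the non-limit case ($\sin\phi>0$ throughout) the strict monotonicity of $\alpha$ from $\alpha(0)=0$ to $\alpha(b)=-2\pi(r+a)$ yields zero counts on $(0,b)$ of $N_c=\lceil 2|r+a|-\tfrac12\rceil$ for $\Re h_+=\sin\phi\cos\alpha$ and $N_s=\lceil 2|r+a|-1\rceil$ for $\Im h_+=\sin\phi\sin\alpha$.

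I then split by boundary condition. For $a=0$ or $a=1/2$, $\Re h_+$ and $\Im h_+$ are linearly independent real eigenfunctions with eigenvalue $2$; a careful count on $[0,b)$ (where $\Im h_+$ picks up an extra zero at $y=0$) makes both counts equal to $2|r+a|$, so Proposition~\ref{pr:weyl-sturm}\ref{it:weyl-per} or \ref{it:weyl-aper} forces them to span the two-dimensional eigenspace at $\{h_{2|r+a|-1},h_{2|r+a|}\}$, yielding $j=2|r+a|-1$. For $2a\notin\mathbb Z$ the spectrum is simple by Proposition~\ref{pr:weyl-sturm}\ref{it:weyl-qper}, and $h_+$ is the unique eigenfunction at $\lambda=2$ up to scalar; applying the proposition at $y_0=0$ gives $\{N_c,N_s\}\subseteq\{j,j+1\}$, which is ambiguous when $N_c=N_s$. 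To resolve this, I apply the proposition a second time at $y_0=-\epsilon$: the simple zero of $\Im h_+$ at $y=0$ now enters the interval, raising its count to $N_s+1$, while the other counts are stable for small $\epsilon$ (using $2a\notin\mathbb Z\Rightarrow r+a\notin\mathbb Z/2\Rightarrow\sin\alpha(b)\ne 0$). Since $\{N_s,N_s+1\}\subseteq\{j,j+1\}$ are two consecutive integers filling a two-element set, $j=N_s$ unambiguously.

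The main obstacle is the second limit case $|r+a|/q=1/2$, in which $\alpha\equiv\const$ makes $\Re h_+$ and $\Im h_+$ proportional, so the prior span argument collapses: only $\sin\phi$ (with $q$ zeros on $[0,b)$) is available, leaving $\lambda=2$ \emph{a priori} at either $h_{q-1}(1)$ or $h_q(1)$. To pin it at $h_{q-1}(1)$ I expect to adapt the Bloch/rearrangement machinery from the proof of Proposition~\ref{pr:weyl-leq0}: since $\rho$ has period $b/q$, the $l=1$ spectrum decomposes over the Floquet sectors $h(y+b/q)=e^{i\beta}h(y)$ with $\beta q\equiv-2\pi a\pmod{2\pi}$, and $\sin\phi$ lies in the $\beta=\pi$ sector as an antiperiodic eigenfunction with exactly one zero on $[0,b/q)$; a comparison argument in this sector using a shifted test function and the continuous rearrangement inequality of \cite[Claim~3.13]{Mor2024}, combined with the standard identification of sector indices with full-spectrum indices as in \cite[Proposition~11]{Kar2014}, pins down the correct position, giving $j=q-1$. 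Collecting all cases yields $j=\lceil 2|r+a|-1\rceil+\delta_{r+a,0}$.
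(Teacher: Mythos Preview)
Your treatment of the nonlimit case follows the paper's argument closely (your handling of the case \(2a\notin\mathbb Z\) via a second application of Proposition~\ref{pr:weyl-sturm}\ref{it:weyl-qper} at \(y_0=-\epsilon\) is in fact more carefully written than the paper's version). One terminological slip: the subcase \(r+a=0\) is not ``the third limit case'' --- that name refers to \((r+a)^2+b^2=p^2\); what you actually use, namely \(d=0\) and \(\alpha\equiv\const\), is correct.

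The genuine gap is in the second limit case \(|r+a|/q=1/2\). You propose to run the shifted-test-function plus rearrangement trick from Proposition~\ref{pr:weyl-leq0} in the antiperiodic sector. That trick worked there because \(\cos\phi\) \emph{increases} on \([0,b_0]\) while \(\rho\) decreases, so the shift produces a test function with strictly \emph{smaller} Rayleigh quotient, forcing \(\lambda_0'<2\). Here the known eigenfunction \(\sin\phi\) \emph{decreases} on \([0,b_0]\) together with \(\rho\); shifting it yields a test function whose Rayleigh quotient is \emph{larger} than \(2\), which gives no information on \(\lambda_0'\). Worse, the desired conclusion is the opposite one: you need \(\lambda_0'=2\) (equivalently \(\lambda_{q-1}(1)=2\)), not \(\lambda_0'<2\). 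The paper explicitly notes that the direct trick fails here and takes a different route: assume for contradiction that \(\lambda_0'<2\); observe that the corresponding ground-state eigenfunction is even and, after rewriting~\eqref{eq:weyl-slaux2} as a Lam\'e equation, coincides with \(\Ec_\nu^1(z\mid m)\) for some \(\nu\in(0,1)\); prove via a Pr\"ufer-transformation comparison that \(\Ec_\nu^1\) is increasing on \([0,K(m)]\); and only then apply the shifted-test-function trick to \emph{this} hypothetical eigenfunction to obtain the contradiction. Your sketch is missing precisely this monotonicity step for the unknown ground state, without which the argument does not close.
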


\begin{proof}
Again, we start with the nonlimit case \(\bigl|\frac{r+a}q\bigr|\ne\frac{1}2\).
Similarly to the proof of Proposition~\ref{pr:weyl-leq0}, the function
\(\sin\phi\,e^{i\alpha}\) solves the
problem~(\ref{eq:weyl-sleq},\ref{eq:weyl-slbc}) with \(l=1\) and \(\lambda=2\).
Note that \(\sin\phi\) does not vanish on \([0,b)\) and \(\alpha\) is a
monotonous function satisfying
\[
\alpha(0)=0,\qquad\alpha(b)=-2\pi(r+a).
\]
In particular, the number of zeroes of the function
\[
h_{re}(y):=\sin\phi\cos\alpha=\Re(\sin\phi\,e^{i\alpha})\qquad
\text{(respectively,\quad
\(h_{im}(y):=\sin\phi\sin\alpha=\Im(\sin\phi\,e^{i\alpha})\))}
\]
on \([0,b)\) coincides with the number of zeroes of the function \(\cos z\)
(respectively, \(\sin z\)) on \([0,2\pi|r+a|)\). Consider the following cases.

\begin{itemize}
\item\emph{Case 1: \(a\in\mathbb Z\).} In this case~\eqref{eq:weyl-slbc} is the
periodic boundary condition and the functions \(h_{re},h_{im}\) both solve the
problem~(\ref{eq:weyl-sleq},\ref{eq:weyl-slbc}). We have the following subcases

\begin{itemize}
\item\emph{Subcase 1.1: \(r+a\ne 0\).} Then both functions \(h_{re},h_{im}\)
have \(2|r+a|\) zeroes on \([0,b)\) and it follows from
Proposition~\ref{pr:weyl-sturm}\ref{it:weyl-per} that \(j=2|r+a|-1\).

\item\emph{Subcase 1.2: \(r+a=0\).} Then \(\alpha\equiv 0,h_{im}\equiv 0\), and
\(h_{re}=\sin\phi\) does not vanish on \([0,b)\). It follows from
Proposition~\ref{pr:weyl-sturm}\ref{it:weyl-per} that \(j=0\).
\end{itemize}

\item\emph{Case 2: \(a\notin\mathbb Z,2a\in\mathbb Z\).} In this
case~\eqref{eq:weyl-slbc} is the antiperiodic boundary condition and the
functions \(h_{re},h_{im}\) both solve the
problem~(\ref{eq:weyl-sleq},\ref{eq:weyl-slbc}). Both functions
\(h_{re},h_{im}\) have \(2|r+a|\) zeroes on \([0,b)\) and it follows from
Proposition~\ref{pr:weyl-sturm}\ref{it:weyl-aper} that \(j=2|r+a|-1\).

\item\emph{Case 3: \(2a\notin\mathbb Z\).} Then for each \(y_0\) both functions
\(h_{re},h_{im}\) have either \([2|r+a|]\) or \([2|r+a|]-1\) zeroes on
\((y_0,y_0+b)\) and it follows from
Proposition~\ref{pr:weyl-sturm}\ref{it:weyl-qper} that \(j=[2|r+a|]\).
\end{itemize}

Gathering all together, we get the result for the nonlimit case.

Proceed to the limit case \(\bigl|\frac{r+a}q\bigr|=\frac{1}2\). In this case
\(\alpha\equiv\const\) and \(\sin\phi\) has \(q\) zeroes on \([0,b)\). Since
\(2a\in\mathbb Z\), \eqref{eq:weyl-slbc} is either the periodic or the
antiperiodic boundary condition. By
Proposition~\ref{pr:weyl-sturm}\ref{it:weyl-per},\ref{it:weyl-aper} we have
either \(\lambda_{q-1}(1)<\lambda_q(1)=2\) or
\(\lambda_{q-1}(1)=\lambda_q(1)=2\). Again, consider an auxiliary antiperiodic
Sturm-Liouville problem
\begin{equation}\label{eq:weyl-slaux2}
-h''(y)+4\pi^2 h(y)=\lambda\rho(y)h(y),\qquad h(y+2b_0)\equiv -h(y),
\end{equation}
where \(b_0:=\frac{b}{2q}\). Let \(\lambda_0'\le\lambda_1'\) be the first two
eigenvalues of this problem. The function \(\sin\phi(y)\) solves this problem
with \(\lambda=2\) and has exactly one zero on \([0,2b_0)\). It follows from the
antiperiodic version of Sturm oscillation theorem that either
\(\lambda_0'=2\) or \(\lambda_0'<\lambda_1'=2\). However, now both
\(\sin\phi(y)\) and \(\rho(y)\) decrease on \([0,b_0]\), which means that we
cannot apply the same trick as in the proof of Proposition~\ref{pr:weyl-leq0}.
In fact, we shall show that now the first possibility takes place. To do this,
we observe that the problem~\eqref{eq:weyl-slaux2} rewrites as
\begin{equation}\label{eq:weyl-lame}
-h''(z)+\lambda m\sn^2(z\mid m)h(z)=\left(\frac{\lambda}2\left(
1+\frac{1}{\tau_3-\tau_1}\right)-\frac{1}{\tau_3-\tau_1}\right)h(z),\qquad
h(z+2K(m))\equiv -h(z),
\end{equation}
where \(z:=2\pi\sqrt{\tau_3-\tau_1}y\). This is so-called \emph{Lam\'e
equation}, see, e.g.,~\cite{Vol2025}. Now assume that
\(\lambda_0'<\lambda_1'=2\). Since \(\sin\phi(y)\) is odd, the eigenfunction
corresponding to the eigenvalue \(\lambda_0'\) is even. It follows that the
corresponding solution of~\eqref{eq:weyl-lame} coincides (up to a scalar factor)
with Lam\'e function \(\Ec_\nu^1(z\mid m)\), where \(\nu>0\) satisfies
\(\nu(\nu+1)=\lambda_0'\). Since \(\lambda_0'<2\), we have \(\nu<1\).

We now need the following fact. Although it seems to be well-known, we could not
find a reference.

\begin{claim}
Let \(\nu\in [0,1]\) and \(m\in (0,1)\). Then the function \(\Ec_\nu^1(z\mid
m)\) is increasing on \([0,K(m)]\).
\end{claim}

\begin{proof}
Since it is an independent claim, we use here the notation that is inconsistent
with the rest of the paper but common for the Sturm-Liouville theory. Fix
\(\nu\in [0,1]\) and consider the equation
\begin{equation}\label{eq:weyl-lamegen} -y''(t)+q(t)y(t)=\lambda y(t),\qquad
q(t):=\nu(\nu+1)m\sn^2(t\mid m). \end{equation} The proof uses a standard
technique from Sturm-Liouville theory called the \emph{Pr\"ufer transformation}.
We give only a sketch of the argument here and refer the reader
to~\cite[Chapter~8]{CL1955} and~\cite[Chapter~4, sec.~5]{Zet2005} for the
missing technical details. The Pr\"ufer transformation is given by the
substitution
\[
y(t)=\rho(t)\sin\theta(t),\qquad y'(t)=\rho(t)\cos\theta(t),
\]
that turns the equation~\eqref{eq:weyl-lamegen} into the system
\begin{equation}\label{eq:weyl-theta}
\theta'(t)=\cos^2\theta(t)+(\lambda-q(t))\sin^2\theta(t),\qquad
\rho'(t)=(1+q(t)-\lambda)\rho(t)\sin\theta(t)\cos\theta(t).
\end{equation}
Let \(\theta_\nu(t;\lambda)\) be the solution of the first equation
in~\eqref{eq:weyl-theta} satisfying the initial condition \(\theta(0)=0\) (the
existence of \(\theta_\nu(t;\lambda)\) follows easily from the elementary ODE
theory).

The function \(y_1(t):=\Ec_1^1(t\mid m)=\sn(t\mid m)\) satisfies
\[
-y_1''(t)+2m\sn^2(t\mid m)y_1(t)=(m+1)y_1(t),\qquad
y_1(0)=y_1'(K(m))=0.
\]
Hence the function \(\tilde y_1(t):=y_1\Bigl(\sqrt{\frac{\nu(\nu+1)}2}t\Bigr)\)
satisfies~\eqref{eq:weyl-lamegen} with
\(\lambda=\lambda_1:=\frac{1}2\nu(\nu+1)(m+1)\) and \(\tilde y_1(0)=\tilde
y_1'\Bigl(\sqrt{\frac{2}{\nu(\nu+1)}}K(m)\Bigr)=0\). Since \(\nu\le 1\), we have
\(\sqrt{\frac{2}{\nu(\nu+1)}}\ge 1\) and the function \(\tilde y_1(t)\)
increases on \([0,K(m)]\). This shows that
\(\theta_\nu(t;\lambda_1)\in\left[0,\frac{\pi}2\right]\) for \(t\in [0,K(m)]\).
It is well-known that for each fixed \(t>0\), \(\theta_\nu(t;\lambda)\) is an
increasing function in \(\lambda\) and \(\theta_\nu(t;\lambda)\to +\infty\) as
\(\lambda\to +\infty\). It follows that there exists \(\lambda_2\ge\lambda_1\)
such that \(\theta_\nu(t;\lambda_2)\in\left[0,\frac{\pi}2\right]\) for \(t\in
[0,K(m)]\) and \(\theta_\nu(K(m);\lambda_2)=\frac{\pi}2\). Clearly, this
\(\lambda_2\) is exactly the first eigenvalue of~\eqref{eq:weyl-lamegen} with
boundary conditions \(y(0)=y'(K(m))=0\). Since \(\cos\theta_\nu(t;\lambda_2)\ge
0\) for \(t\in [0,K(m)]\), we see that the corresponding eigenfunction
\(\Ec_\nu^1(t\mid m)\) increases on the same interval.
\end{proof}

It follows that the function \(\Ec_\nu^1(z\mid m)\) increases \([0,K(m)]\),
i.e., the corresponding eigenfunction of~\eqref{eq:weyl-slaux2} increases on
\([0,b_0]\). We can then apply the same trick as in the proof of
Proposition~\ref{pr:weyl-leq0} to show that \(\lambda_0'\) is not the first
eigenvalue of~\eqref{eq:weyl-slaux2}. This contradiction shows that
\(\lambda_0'=2\). Hence, again by the same argument as
in~\cite[Proposition~11]{Kar2014} and~\cite[Proposition~3.15]{Mor2024}, we have
\(\lambda_{q-1}(1)=2\), which immediately implies the result.
\end{proof}

\begin{proposition}\label{pr:weyl-N2eqsc}
Suppose that~\eqref{eq:intro-N2eqsc} is satisfied. Then \(\lambda_0(l)>2\) for
each \(l\ge 2\).
\end{proposition}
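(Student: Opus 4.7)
The plan is to apply the variational (Rayleigh quotient) characterization of $\lambda_0(l)$ for the Sturm--Liouville problem~(\ref{eq:weyl-sleq},\ref{eq:weyl-slbc}): by standard theory,
\[
\lambda_0(l) = \inf_{h\not\equiv 0}\frac{\int_0^b (|h'|^2+4\pi^2 l^2|h|^2)\,\d y}{\int_0^b \rho|h|^2\,\d y},
\]
where the infimum runs over continuous $h$ satisfying the quasi-periodic condition~\eqref{eq:weyl-slbc}. Proving $\lambda_0(l)>2$ is therefore equivalent to establishing
\[
\int_0^b |h'|^2\,\d y + \int_0^b (4\pi^2 l^2-2\rho(y))|h|^2\,\d y > 0
\]
for every admissible $h\not\equiv 0$.

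The key step is a pointwise upper bound on $\rho$. Using $-\cos 2\phi = 1-2\tau$ with $\tau(y):=\cos^2\phi(y)\in[\tau_1,\tau_2]$, the formula from Theorem~\ref{th:intro-main} becomes $\rho(y) = 2\pi^2(\tau_1+\tau_2+\tau_3-2\tau(y))$, so $\rho$ is maximized at the (finitely many) points of $[0,b)$ where $\tau(y)=\tau_1$, with $\rho_{\max}=2\pi^2(\tau_2+\tau_3-\tau_1)$. Under the hypothesis~\eqref{eq:intro-N2eqsc} we have $2\rho_{\max}\le 16\pi^2\le 4\pi^2 l^2$ for all $l\ge 2$, and in fact
\[
4\pi^2 l^2 - 2\rho(y) = 4\pi^2\bigl(l^2-(\tau_2+\tau_3-\tau_1)\bigr) + 8\pi^2(\tau(y)-\tau_1) \ge 0,
\]
with strict inequality outside a finite set.

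The strict positivity of the Rayleigh quotient integrand then follows immediately: any continuous $h\not\equiv 0$ is nonzero on a set of positive measure, and since $4\pi^2 l^2 - 2\rho$ is strictly positive almost everywhere, the weighted integral $\int_0^b (4\pi^2 l^2 - 2\rho)|h|^2\,\d y$ is strictly positive, and adding the non-negative $\int_0^b |h'|^2\,\d y$ preserves strict positivity. The whole argument is essentially routine; the only point requiring a moment of attention is the borderline subcase $l=2$ and $\tau_2+\tau_3-\tau_1=4$, where the pointwise inequality collapses to equality on a finite set, but continuity of $h$ still forces the integral to be strictly positive. I expect no substantive obstacle; the main work is simply identifying the right pointwise bound on $\rho$ and matching it against $4\pi^2 l^2$.
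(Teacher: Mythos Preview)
Your proof is correct and follows essentially the same route as the paper: both use the variational characterization of \(\lambda_0(l)\), drop the nonnegative term \(\int_0^b|h'|^2\,\d y\), and invoke the pointwise bound \(\rho(y)\le 2\pi^2(\tau_2+\tau_3-\tau_1)\) (with equality only on a finite set) to conclude that the Rayleigh quotient strictly exceeds \(2\) for all \(l\ge 2\). The only cosmetic difference is that the paper keeps the argument in ratio form rather than rearranging as \(\int_0^b(4\pi^2 l^2-2\rho)|h|^2\,\d y>0\).
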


\begin{proof}
It follows from the variational characterization of eigenvalues for the
problem~(\ref{eq:weyl-sleq},\ref{eq:weyl-slbc}) that
\[
\lambda_0(l)=\inf\frac{\int_0^b (|h'|^2+4\pi^2 l^2|h|^2)\d y}
{\int_0^b\rho|h|^2\d y},
\]
where the infimum is taken over all \(h\in C^\infty(\mathbb R,\mathbb
C)\setminus\{0\}\) such that \(h(y+b)\equiv e^{-2\pi ila}h(y)\). Since
\[
\rho(y)=2\pi^2(-2\tau+\tau_1+\tau_2+\tau_3)\le 2\pi^2(\tau_2+\tau_3-\tau_1),
\]
for each \(l\ge 2\) we have
\[
\frac{\int_0^b (|h'|^2+4\pi^2 l^2|h|^2)\d y}{\int_0^b\rho|h|^2\d y}
\ge
\frac{16\pi^2\int_0^b |h|^2\d y}{\int_0^b\rho|h|^2\d y}>
\frac{8}{\tau_2+\tau_3-\tau_1}\ge 2.
\]
\end{proof}

\begin{proposition}\label{pr:weyl-N2eqscsc}
If~\eqref{eq:intro-N2eqscsc} is satisfied, then~\eqref{eq:intro-N2eqsc} is
satisfied.
\end{proposition}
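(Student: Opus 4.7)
The plan is to transcribe the condition $\tau_2+\tau_3-\tau_1\le 4$ into an inequality in the auxiliary variables $m,n_0,n_1$ introduced in~\eqref{eq:intro-mn}, and then reduce the task to two endpoint inequalities for the function $\Phi$ from~\eqref{eq:taus-Phi}. Inverting the definitions of $n_0,n_1$ gives
\[
\tau_1=\frac{n_1}{n_1-n_0},\qquad
\tau_2=\frac{n_1(1-n_0)}{n_1-n_0},\qquad
\tau_3=\frac{n_1(m-n_0)}{m(n_1-n_0)},
\]
after which a direct computation yields
\[
\tau_2+\tau_3-\tau_1\le 4\quad\Longleftrightarrow\quad
n_0\bigl[4m-n_1(m+1)\bigr]\le 3mn_1.
\]
Since $n_0\le 0$ while the right-hand side is nonnegative, the inequality is automatic whenever $n_1\le 4m/(m+1)$. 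This already covers the entire range $m\ge 1/3$ (because then $4m/(m+1)\ge 1\ge n_1$), so the only remaining situation is $m<1/3$ with $n_1>4m/(m+1)$; there the inequality is equivalent to $n_0\ge C(m,n_1)$ with $C(m,n_1):=-3mn_1/(n_1(m+1)-4m)<0$.

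In the case $p/q>1/\sqrt 3$, by~\eqref{eq:taus-n0n1} and the monotonicity of $\Phi(\,\cdot\,|m)$ on $(-\infty,0)$ established in section~\ref{sec:taus}, one has $n_0>n_0^\ast(m)$ with $n_0^\ast(m)$ determined by $\Phi(n_0^\ast(m)|m)=\pi/\sqrt 3$. A short computation gives $\partial C/\partial n_1=12m^2/(n_1(m+1)-4m)^2>0$, so for each $m$ the value $C(m,n_1)$ is maximal at $n_1=1$ and equals $-3m/(1-3m)$. Thus the desired inequality follows once one verifies
\[
\Phi\!\left(-\frac{3m}{1-3m}\bigmid m\right)\le\frac{\pi}{\sqrt 3},\qquad m\in[0,\tfrac13].
\]
In the case $|(r+a)/q|<\sqrt 3/4$, by the monotonicity of $\Phi(\,\cdot\,|m)$ on $[m,1]$ it suffices to show $n_1\le 4m/(m+1)$, returning us to the automatic subcase. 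This in turn reduces to
\[
\Phi\!\left(\frac{4m}{m+1}\bigmid m\right)\ge\frac{\pi\sqrt 3}{4},\qquad m\in[0,\tfrac13].
\]

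Both key inequalities hold with equality in the limit $m\to 0$: a direct computation gives $(1-n)(n-m)/n\to\tfrac43$ along the first substitution and $\to\tfrac34$ along the second, so the values of $\Phi$ tend to $(2/\sqrt 3)\Pi(0|0)=\pi/\sqrt 3$ and $(\sqrt 3/2)\Pi(0|0)=\pi\sqrt 3/4$ respectively. At the other endpoint $m=\tfrac13$ we have $-3m/(1-3m)=-\infty$ and $4m/(m+1)=1$; in both cases $\Phi$ equals $\pi/2$, which is consistent with the inequalities ($\pi/2<\pi/\sqrt 3$ and $\pi/2>\pi\sqrt 3/4$). The remaining step is to show that $m\mapsto\Phi(4m/(m+1)|m)$ is nondecreasing and $m\mapsto\Phi(-3m/(1-3m)|m)$ is nonincreasing on $[0,\tfrac13]$, which combined with the values at $m=0$ yields the two key estimates. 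I plan to establish these monotonicities by computing the total derivative via the chain rule, using the formula for $\partial\Phi/\partial n$ recorded in~\eqref{eq:taus-nprim} together with the analogous expression for $\partial\Phi/\partial m$, and checking the sign of the resulting linear combination of $K(m)$, $E(m)$, and $\Pi(n|m)$. This derivative analysis, accessible via the $E$-$K$ comparison~\eqref{eq:taus-EKineq} but requiring careful algebraic manipulation along the two specific curves in the $(n,m)$-plane, is the principal technical obstacle.
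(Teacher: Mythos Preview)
Your approach is correct and essentially coincides with the paper's. After the change of variables you arrive at exactly the two key inequalities the paper isolates---your $\Phi(-3m/(1-3m)\mid m)\le\pi/\sqrt3$ and $\Phi(4m/(m+1)\mid m)\ge\pi\sqrt3/4$ are precisely the functions the paper calls $\Omega(m)$ in Cases~1 and~2---and the monotonicity you plan to establish is exactly what the paper proves by computing $\Omega'(m)$ and comparing $K(m)/E(m)$ with an explicit rational function of $m$ (via $K/E>1+m/4$ in the first case and $K/E<1/\sqrt{1-m}$ in the second). Your packaging is arguably a bit cleaner: you read off the threshold curves $n_0=-3m/(1-3m)$ and $n_1=4m/(m+1)$ directly from the rewritten inequality, whereas the paper reaches them through intermediate upper bounds and a ``$\Theta(m)$ cannot change sign'' argument, but the substance is the same.
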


\begin{proof}
Rewriting the LHS of~\eqref{eq:intro-N2eqsc} in terms of \(n_0,n_1,m\) we obtain
\[
\tau_2+\tau_3-\tau_1=\frac{n_1}{n_1-n_0}\left(1-n_0-\frac{n_0}m\right).
\]
Consider two cases.

\textit{Case 1: \(\frac{p}q>\frac{1}{\sqrt 3}\).} We have
\[
\frac{n_1}{n_1-n_0}\left(1-n_0-\frac{n_0}m\right)\le
\frac{1}{1-n_0}\left(1-n_0-\frac{n_0}m\right)=
1-\frac{n_0}{(1-n_0)m}.
\]
Hence it suffices to show that \(\Theta(m)>0\), where
\[
\Theta(m):=3+\frac{n_0}{(1-n_0)m}
\]
and \(n_0\) is a function in \(m\) given implicitly by~\eqref{eq:taus-n0n1}. It
follows from~\eqref{eq:taus-limzero} that
\[
\lim_{m\to 0}\Theta(m)=3+\frac{1}{1-\frac{4p^2}{q^2}}>0,\quad\text{since}\quad
\frac{p}q>\frac{1}{\sqrt 3}.
\]
We have to show that \(\Theta(m)\) does not change sign on \((0,1)\). For this
it suffices to show that if \(\Theta(m)=0\) for some \(m\in (0,1)\), then
\[
\Phi(n_0\mid m)=\sqrt{\frac{(1-n_0)(n_0-m)}{n_0}}\Pi(n_0\mid m)<\frac{p\pi}q.
\]
Eliminating \(n_0\) and using again that \(\frac{p}q>\frac{1}{\sqrt 3}\), we
obtain that it suffices to prove the following inequality
\[
\Omega(m):=\sqrt{\frac{4-3m}{3(1-3m)}}\Pi\left(-\frac{3m}{1-3m}\bigmid
m\right)\le\frac{\pi}{\sqrt 3},\qquad m\in (0,1/3).
\]
We have
\[
\Omega'(m)=\frac{2(2-3m)E(m)-(4-3m)(1-m)K(m)}{2m(1-m)\sqrt{3(4-3m)(1-3m)}}.
\]
Since \(\Omega(0)=\frac{\pi}{\sqrt 3}\), it remains to check that
\[
2(2-3m)E(m)-(4-3m)(1-m)K(m)<0,\qquad m\in (0,1/3).
\]
We have
\[
\frac{K(m)}{E(m)}>1+\frac{m}4>\frac{2(2-3m)}{(4-3m)(1-m)},
\]
where the first inequality follows from \(K(0)=E(0)=\pi/2\) and
\[
\frac{\d}{\d m}\left[K(m)-\left(1+\frac{m}4\right)E(m)\right]=
\frac{1}8\left(\frac{(1+3m)E(m)}{1-m}+K(m)\right)>0,
\]
and the second inequality is equivalent to the trivial inequality
\(m^2(3m+5)>0\).

\textit{Case 2: \(\bigl|\frac{r+a}q\bigr|<\frac{\sqrt 3}4\).} It is easy to see
that
\[
\frac{n_1}{n_1-n_0}\left(1-n_0-\frac{n_0}m\right)\le
n_1\left(1+\frac{1}m\right).
\]
Hence it suffices to show that \(\Theta(m)>0\), where
\[
\Theta(m):=4-n_1\left(1+\frac{1}m\right).
\]
and \(n_1\) is a function in \(m\) given implicitly by~\eqref{eq:taus-n0n1}. It
follows from~\eqref{eq:taus-limzero} that
\[
\lim_{m\to 0}\Theta(m)=4-\frac{1}{1-\frac{4(r+a)^2}{q^2}}>0,\quad\text{since}\quad
\left|\frac{r+a}q\right|<\frac{\sqrt 3}4.
\]
We have to show that \(\Theta(m)\) does not change sign on \((0,1)\). For this
it suffices to show that if \(\Theta(m)=0\) for some \(m\in (0,1)\), then
\[
\Phi(n_1\mid m)=\sqrt{\frac{(1-n_1)(n_1-m)}{n_1}}\Pi(n_1\mid m)>
\left|\frac{(r+a)\pi}q\right|.
\]
Eliminating \(n_1\) and using again that \(\bigl|\frac{r+a}q\bigr|<\frac{\sqrt
3}4\), we obtain that it suffices to prove the following inequality
\[
\Omega(m):=\sqrt{\frac{(1-3m)(3-m)}{4(1+m)}}\Pi\left(\frac{4m}{1+m}\bigmid
m\right)\ge\frac{\sqrt 3}4\pi,\qquad m\in (0,1/3).
\]
We have
\[
\Omega'(m)=\frac{(3m^2-2m+3)E(m)-(3-m)(1-m)K(m)}{4m(1-m)\sqrt{(3-m)(1-3m)(1+m)}}.
\]
Since \(\Omega(0)=\frac{\sqrt 3\pi}4\), it remains to check that
\[
(3m^2-2m+3)E(m)-(3-m)(1-m)K(m)>0,\qquad m\in (0,1/3).
\]
We have
\[
\frac{K(m)}{E(m)}<\frac{1}{\sqrt{1-m}}<\frac{3m^2-2m+3}{(3-m)(1-m)},
\]
where the first inequality follows from \(K(0)=E(0)=\pi/2\) and
\[
\frac{\d}{\d m}\left[\sqrt{1-m}K(m)-E(m)\right]=
\frac{(1-\sqrt{1-m})(E(m)-K(m))}{2m\sqrt{1-m}}<0,
\]
and the second inequality is equivalent to the trivial inequality
\[
m(9m^3-11m^2+15m+3)>0,\qquad m\in (0,1/3).
\]
\end{proof}

Now we have
\begin{align*}
N(2)&=\sum_{i=0}^\infty\#\{i>0\mid\lambda_i(0)<2\}+
\sum_{l=1}^\infty\sum_{i=0}^\infty 2\#\{i>0\mid\lambda_i(l)<2\}
&&\text{(by Proposition~\ref{pr:weyl-sep})}\\
&\ge\sum_{i=0}^\infty\#\{i>0\mid\lambda_i(0)<2\}+
\sum_{i=0}^\infty 2\#\{i>0\mid\lambda_i(1)<2\}\\
&=2p-1+\delta_{2p,q}+2(\lceil 2|r+a|-1\rceil+\delta_{r+a,0})
&&\text{(by
Propositions~\ref{pr:weyl-leq0}, \ref{pr:weyl-sturm}\ref{it:weyl-per},
and \ref{pr:weyl-leq1})}.
\end{align*}

This proves~\eqref{eq:intro-N2ineq}. If~\eqref{eq:intro-N2eqscsc} is satisfied,
then the single inequality becomes equality by Propositions~\ref{pr:weyl-N2eqsc}
and~\ref{pr:weyl-N2eqscsc}.\qed
\begin{remark}\label{rem:weyl-N2noteq}
One can show that the inequality~\eqref{eq:intro-N2ineq} can be strict. Indeed,
using \(h(y):=e^{-\frac{4\pi ia}b y}\) as a test function for the
problem~(\ref{eq:weyl-sleq},\ref{eq:weyl-slbc}) with \(l=2\), we obtain
\[
\lambda_0(2)\le
\frac{\int_0^b (|h'|^2+16\pi^2 |h|^2)\d y}{\int_0^b\rho|h|^2\d y}<
\frac{8\bigl(\frac{a^2}{b^2}+1\bigr)}{\tau_1+\tau_3-\tau_2},
\]
where we used that
\[
\rho(y)=2\pi^2(-2\tau+\tau_1+\tau_2+\tau_3)\ge 2\pi^2(\tau_1+\tau_3-\tau_2).
\]
It follows that if
\[
\tau_1+\tau_3-\tau_2=\frac{n_1}{n_1-n_0}\left(1-\frac{n_0}m+n_0\right)>
4\left(\frac{a^2}{b^2}+1\right),
\]
then \(\lambda_0(2)<2\), and the inequality~\eqref{eq:intro-N2ineq} is strict.
This can be achieved by the following step-by-step procedure.
\begin{itemize}
\item
Put
\[
T(m,n_0,n_1):=\frac{n_1}{n_1-n_0}\left(1-\frac{n_0}m+n_0\right).
\]
Choose \(m\in (0,1),\tilde n_0\in (-\infty,0),n_1\in (m,1)\) such that
\(T(m,\tilde n_0,n_1)>4\) (for example, \(m=\frac{1}6,\tilde
n_0=-6,n_1=\frac{9}{10}\)).

\item
Choose \(n_0<0\) so close to \(\tilde n_0\) that \(T_0:=T(m,n_0,n_1)>4\) and
\(\Phi(n_0\mid m)=\frac{p_0\pi}{q_0}\) is a rational multiple of \(\pi\), where
\(\Phi\) is the function defined by~\eqref{eq:taus-Phi}.

\item 
Choose \(k\in\mathbb N\) such that
\[
b:=\frac{kq_0}\pi\sqrt{\left(\frac{1}{n_1}-\frac{1}{n_0}\right)m}K(m)
\]
is so large that \(b>\max\bigl(1,\frac{1}{\sqrt{T_0-4}}\bigr)\). Put
\(p:=kp_0,q:=kq_0\).

\item
Finally, let \(\eta:=\Phi(n_1\mid m)\). Put \(r:=[\eta q],a:=\{\eta q\}\) if
\(\{\eta q\}\in\left[0,\frac{1}2\right]\) and \(r:=[-\eta q],a:=\{-\eta q\}\)
otherwise.
\end{itemize}
With \(a,b,p,q,r\) chosen this way, we have
\[
T(m,n_0,n_1)=T_0>4\left(\frac{1}{4b^2}+1\right)\ge
4\left(\frac{a^2}{b^2}+1\right),
\]
as desired.
\end{remark}

\section{The value of the functional}\label{sec:barlval}

In this section we prove~\eqref{eq:intro-barlval} and~\eqref{eq:intro-barlineq}.
We have
\[
\bar\lambda_{N(2)}(\mathbb T^2,g_{a,b}^{p,q,r})=
2\int_0^b\rho(y)\,\d y=
4\pi^2((\tau_1+\tau_2+\tau_3)b-2\int_0^b\tau\,\d y).
\]
By~\eqref{eq:harm-taueq} and~\cite[233.17 and 331.01]{BF1971} we have
\begin{multline*}
\int_0^b\tau\,\d y=
\frac{q}{2\pi}\int_{\tau_1}^{\tau_2}
\frac{t\,\d t}{\sqrt{(t-\tau_1)(\tau_2-t)(\tau_3-t)}}=
\frac{q\tau_1}{\pi\sqrt{\tau_3-\tau_1}}\left(\left(1-\frac{n_0}m\right)K(m)+
\frac{n_0}m E(m)\right)=\\
b\tau_3-\frac{q}\pi\sqrt{\tau_3-\tau_1}E(m).
\end{multline*}
This implies~\eqref{eq:intro-barlval}. It remains to show that if \(p=q=1\),
then
\[
4\pi^2 b(\tau_1+\tau_2-\tau_3)+8\pi\sqrt{\tau_3-\tau_1}E(m)>
\max\bigl\{\frac{4\pi^2}b,8\pi\bigr\},
\]
which after replacing \(\tau_1,\tau_2,\tau_3\) by \(m,n_0,n_1\) and some
manipulations rewrites as
\[
\left(m-\frac{m}{n_0}-1\right)K(m)^2+2K(m)E(m)>\max\{\pi^2,2\pi b\}.
\]
Introduce the functions
\[
\Xi(m):=\left(m-\frac{m}{n_0}-1\right)K(m)^2+2K(m)E(m),\qquad
\widetilde\Xi(m):=\frac{\Xi(m)}{\sqrt{\Psi(m)}},
\]
where \(\Psi\) is given by~\eqref{eq:taus-Psi} and \(n_0,n_1\) are considered as
functions in \(m\) given implicitly by~\eqref{eq:taus-n0n1}. Then we need to
prove that \(\Xi(m)>\pi^2\) and \(\tilde\Xi(m)>2\). Using~\eqref{eq:taus-nprim},
we find
\[
\Xi'(m)=\frac{n_0E(m)(K(m)-E(m))((m-1)K(m)+E(m))}{m(m-1)
(n_0 E(m)+(m-n_0)K(m))}>0,
\]
where we used~\eqref{eq:taus-EKineq} and~\eqref{eq:taus-EKpos}. Further, it
follows from~\eqref{eq:taus-limzero} that
\[
\lim_{m\to 0}\Xi(m)=\pi^2,
\]
and this proves that \(\Xi(m)>\pi^2\).

The proof of the second inequality is more involved. We have
\[
\widetilde\Xi'(m)=\frac{(mn_0n_1-2mn_0-mn_1+n_0 n_1)E(m)(E(m)-K(m))(E(m)-(1-m)K(m))}
{2(m-1)m\sqrt{\bigl(\frac{1}{n_1}-\frac{1}{n_0}\bigr)m}(n_0 E(m)+(m-n_0)K(m))
(n_1 E(m)+(m-n_1)K(m))}.
\]
Let us show that
\begin{equation}\label{eq:barlval-Xiaux}
mn_0n_1-2mn_0-mn_1+n_0 n_1<0\quad\Leftrightarrow\quad
m-\frac{m}{n_0}+1-2\frac{m}{n_1}>0.
\end{equation}
Since \(n_1>m\), it suffices to show that \(\Theta(m)>0,m\in (0,1)\), where
\[
\Theta(m):=m-\frac{m}{n_0}-1.
\]
It follows from~\eqref{eq:taus-limzero} that
\[
\lim_{m\to 0}\Theta(m)=2>0.
\]
We have to show that \(\Theta(m)\) does not change sign on \((0,1)\). For this
it suffices to show that if \(\Theta(m)=0\) for some \(m\in (0,1)\), then
\[
\Phi(n_0\mid m)\sqrt{\frac{(1-n_0)(n_0-m)}n}\Pi(n_0\mid m)<\pi.
\]
Eliminating \(n_0\), we obtain that it suffices to prove the following
inequality
\[
\Omega(m):=\sqrt{\frac{2-m}{1-m}}\Pi\left(-\frac{m}{1-m}\bigmid m\right)<\pi,
\qquad m\in (0,1).
\]
In sec.~\ref{sec:otsuki} we will see that the function \(\Omega(m)\)
monotonously decreases on \((0,1)\) and
\[
\Omega(0)=\frac{\sqrt 2}2\pi,\qquad
\lim_{m\to 1}\Omega(m)=\frac{\pi}2,
\]
which immediately implies the desired inequality. This finishes the proof
of~\eqref{eq:barlval-Xiaux}. Using this inequality and~\eqref{eq:taus-EKineq}
and~\eqref{eq:taus-EKpos}, we see that \(\widetilde\Xi'(m)<0\). Further, it
follows from~\eqref{eq:taus-limone} and \((m-1)K(m)\to 0\) as \(m\to 1\) that
\[
\lim_{m\to 1}\widetilde\Xi(m)=2,
\]
and this proves that \(\widetilde\Xi(m)>2\). This concludes the proof of
Theorem~\ref{th:intro-main}.\qed

\subsection{Maximal metrics in rectangular conformal classes}

\begin{proof}[Proof of Theorem~\ref{th:intro-rectmax}]
Swapping the sides of the rectangle if necessary, we may assume that \(b>0,b\ne
1\) and \(\rho=\rho(y)\) is independent of \(x\). By rescaling, we may assume
that \(\lambda_1(\mathbb T^2,g)=2\). The Laplace eigenvalue problem for the
metric \(g\) is given by~\eqref{eq:weyl-lap} and Proposition~\ref{pr:weyl-sep}
applies. Since \(a=0\),~\eqref{eq:weyl-slbc} is the periodic boundary condition
for all integer \(l\ge 0\). It follows easily from the variational
characterization of Sturm-Liouville eigenvalues that \(\lambda_0(l)\) is a
monotonously increasing function in \(l\). Since \(\lambda_1(\mathbb T^2,g)=2\)
and \(\lambda_0(0)=0\), we have
\[
\lambda_j(l)>2\qquad\text{whenever}\qquad j=0,\,l>2\quad\text{or}\quad
j=1,\,l>1\quad\text{or}\quad j\ge 2.
\]
In particular, the multiplicity of the eigenvalue \(2\) is at most \(4\) and
there exists a harmonic map \(u\colon (\mathbb T^2,[g_{0,b}])\to\mathbb S^3\)
(possibly, not linearly full). Since the \(\lambda_1\)-eigenspace of
\(\Delta_g\) is generated by functions of the form~\eqref{eq:weyl-eigenfun} with
\(l=0,1\), the map \(u\) can be easily made \(\mathbb S^1\)-invariant w.r.t.
actions~(\ref{eq:intro-actS3},\ref{eq:intro-actT2}). Since we know from
Theorem~\ref{th:intro-flatnotmax} that the flat metric is not maximal in the
conformal class \([g_{0,b}]\) for \(b\ne 1\), it follows from
Theorem~\ref{th:intro-harm} that \(u=u_{0,b}^{p,q,r}\) for some integer
\(p,q>0\) and \(r\) satisfying~\eqref{eq:intro-pqrineq}. Since \(N(2)=1\), the
inequality~\eqref{eq:intro-N2ineq} implies that \(p=q=1,r=0\), and
then~\eqref{eq:intro-pqrineq} implies that \(b>1\). The result follows.
\end{proof}

\section{Relation to Otsuki tori}\label{sec:otsuki}

In this section we prove

\begin{theorem}
For any integers \(\tilde p,\tilde q,\tilde r\) such that \(\tilde p,\tilde q\)
are positive, relatively prime, and satisfy \(\frac{\tilde p}{\tilde
q}\in\bigl(\frac{1}2,\frac{\sqrt 2}2\bigr)\) there exists a unique \(\tilde b\ge
1\) such that for any integer \(k\ge 1\) the map \(u_{-\tilde r,k\tilde
b}^{k\tilde p,k\tilde q,\tilde r}\) is minimal. If \(\frac{\tilde p}{\tilde
q}\ne\frac{1}2\), then the image of this map coincides (up to isometries of
\(\mathbb S^3\)) with the Otsuki torus \(O_{\tilde p/\tilde q}\). These are the
only minimal maps among the maps \(u_{a,b}^{p,q,r}\).
\end{theorem}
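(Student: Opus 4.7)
A harmonic map between surfaces is minimal precisely when it is weakly conformal, so the first step is to characterise conformality of $u_{a,b}^{p,q,r}$. From the parametrisation in Section~\ref{sec:harm} together with the first integrals $\theta'\cos^2\phi=c$, $\alpha'\sin^2\phi=d$, and $(\phi')^2=A-4\pi^2\cos^2\phi-c^2/\cos^2\phi-d^2/\sin^2\phi$, a short direct computation yields
\[
|\partial_x u|^2=4\pi^2\sin^2\phi,\qquad
|\partial_y u|^2=A-4\pi^2\cos^2\phi,\qquad
\langle\partial_x u,\partial_y u\rangle=2\pi d,
\]
so $u_{a,b}^{p,q,r}$ is conformal iff $d=0$ and $A=4\pi^2$. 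Via~\eqref{eq:weyl-A} and~\eqref{eq:harm-cd} these translate into $\tau_1+\tau_2+\tau_3=2$ together with $(1-\tau_1)(1-\tau_2)(1-\tau_3)=0$. The ordering $\tau_1<\tau_2\le 1\le\tau_3$ with $\tau_2<\tau_3$ forces the second condition to be $\tau_2=1$ or $\tau_3=1$; the case $\tau_2=1$ combined with $\tau_1+\tau_3=1$ would require $\tau_1<0$, contradiction, so $\tau_3=1$ and $\tau_1+\tau_2=1$. Reviewing Section~\ref{sec:taus}, $\tau_3=1$ is equivalent to $n_1=m$, i.e.\ $\Phi(n_1\mid m)=0$, which by~\eqref{eq:taus-n0n1} means $r+a=0$. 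Hence the theorem's parametrisation $a=-\tilde r$ is \emph{forced} by minimality, and simultaneously one sees that these are the only minimal maps in the family.

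Existence and uniqueness of $\tilde b$ will then follow from analysing the remaining defining equations. With $\tau_3=1$ and $\tau_2=1-\tau_1$, equation~\eqref{eq:intro-taup} reads
\[
F(\tau_1):=\int_{\tau_1}^{1-\tau_1}\frac{\sqrt{\tau_1(1-\tau_1)}\,\d t}{t\sqrt{(t-\tau_1)(1-\tau_1-t)(1-t)}}=\frac{2\pi\tilde p}{\tilde q},
\]
which is visibly independent of $k$. I would verify the boundary values $F(\tau_1)\to\pi$ as $\tau_1\to 0^+$ and $F(\tau_1)\to\sqrt 2\,\pi$ as $\tau_1\to\tfrac12^-$ (the first by rewriting $F$ through $\Phi(n_0\mid m)$ as in Section~\ref{sec:taus} and using~\eqref{eq:taus-limzero}, the second by the rescaling $t=\tfrac12+s\epsilon$, $\tau_1=\tfrac12-\epsilon$ and direct evaluation), and strict monotonicity of $F$ on $(0,\tfrac12)$ by differentiating the implicit relation that defines it---the sign argument parallels the computation of $\Psi'>0$ in Section~\ref{sec:taus}. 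This gives a unique $\tau_1\in(0,\tfrac12)$ for each $\tilde p/\tilde q\in(\tfrac12,\tfrac{\sqrt 2}{2})$, after which~\eqref{eq:intro-taub} reads $\tilde b=\frac{\tilde q}{\pi\sqrt{1-\tau_1}}K\!\bigl(\frac{1-2\tau_1}{1-\tau_1}\bigr)$, depending only on $\tilde p/\tilde q$ and $\tilde q$ (and not on $k$ or $\tilde r$). The bound $\tilde b\ge 1$ then follows from $K\ge\pi/2$ together with the inequality $\tilde q>\sqrt 2\,\tilde p$ implied by $\tilde p/\tilde q<\tfrac{\sqrt 2}{2}$.

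Finally, the identification with $O_{\tilde p/\tilde q}$ I would obtain via the Hsiang-Lawson description: an $\mathbb S^1$-equivariant minimal surface in $\mathbb S^3$ with respect to action~\eqref{eq:intro-actS3} is determined up to ambient isometry by the closed curve its image traces in the orbit space $\mathbb S_{\ge 0}^2$, which is a closed geodesic in the orbital metric; Otsuki tori are classified by the rational rotation number, i.e.\ the ratio of $\theta$-winding to $\phi$-oscillation over one full period. Equations~\eqref{eq:intro-taup} and~\eqref{eq:intro-taub} together read this ratio off as exactly $\tilde p/\tilde q$ for our map, identifying its image with $O_{\tilde p/\tilde q}$. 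The main technical obstacle is the strict monotonicity of $F$ on $(0,\tfrac12)$; although highly plausible and indeed necessary for uniqueness, proving it cleanly will require an elliptic-integral manipulation in the spirit of Propositions~\ref{pr:weyl-N2eqsc} and~\ref{pr:weyl-N2eqscsc}.
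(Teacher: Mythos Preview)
Your approach is essentially the paper's: derive the conformality conditions $d=0$, $A=4\pi^2$, translate them into $\tau_3=1$, $\tau_1+\tau_2=1$, $r+a=0$, reduce~\eqref{eq:intro-taup} to a single equation in one variable, show that equation has a unique solution exactly when $\tfrac{p}{q}\in(\tfrac12,\tfrac{\sqrt2}{2})$, and then identify the image via Hsiang--Lawson. The only substantive difference is the choice of variable: you keep $\tau_1$ and study $F(\tau_1)$, whereas the paper passes to $m=\frac{1-2\tau_1}{1-\tau_1}$ and studies
\[
\Omega(m):=\sqrt{\frac{2-m}{1-m}}\,\Pi\!\left(-\frac{m}{1-m}\Bigm| m\right)=\tfrac12 F(\tau_1).
\]
This reparametrisation pays off precisely at the step you flag as the obstacle. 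The paper obtains the clean formula
\[
\Omega'(m)=\frac{2E(m)-(2-m)K(m)}{2m\sqrt{(2-m)(1-m)}},
\]
and then shows the numerator is negative by one more differentiation and~\eqref{eq:taus-EKineq}. So the monotonicity is considerably lighter than the arguments in Propositions~\ref{pr:weyl-N2eqsc}--\ref{pr:weyl-N2eqscsc}; differentiating $F$ in $\tau_1$ directly would work but is messier, because both the limits and the integrand depend on $\tau_1$. Your verification of $\tilde b>1$ (via $K\ge\pi/2$ and $\tilde q\ge 2$) is a small addition the paper does not spell out.
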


\begin{proof}
The map \(u_{a,b}^{p,q,r}\) is minimal if and only if it is conformal, i.e.
\[
|\partial_x u_{a,b}^{p,q,r}|^2=|\partial_y u_{a,b}^{p,q,r}|^2,\qquad
\langle\partial_x u_{a,b}^{p,q,r},\partial_y u_{a,b}^{p,q,r}\rangle=0.
\]
Since
\begin{equation}\label{eq:otsuki-hopf}
|\partial_x u_{a,b}^{p,q,r}|^2=4\pi^2\sin^2\phi,\qquad
|\partial_y u_{a,b}^{p,q,r}|^2=A-4\pi^2\cos^2\phi,\qquad
\langle\partial_x u_{a,b}^{p,q,r},\partial_y
u_{a,b}^{p,q,r}\rangle=2\pi d,
\end{equation}
we obtain that the minimality of \(u_{a,b}^{p,q,r}\) is equivalent to the
equalities \(A=4\pi^2\) and \(d=0\), which in turn are equivalent to
\[
\tau_1+\tau_2=1,\qquad\tau_3=1.
\]
In particular, the second equality and~\eqref{eq:intro-taur} imply that
\(r+a=0\). Rewriting both equalities in terms of \(m,n_0,n_1\), we get
\[
n_0=-\frac{m}{1-m},\qquad n_1=m. 
\]
Eliminating \(n_0\) from the second equation in~\eqref{eq:taus-nm}, we obtain
\[
\Omega(m):=\sqrt{\frac{2-m}{1-m}}\Pi\left(-\frac{m}{1-m}\bigmid m\right)=
\frac{\pi p}q.
\]
We have \(\Omega(0)=\frac{\sqrt 2}2\pi\). Let us compute \(\lim\limits_{m\to
1}\Omega(m)\). We have
\begin{multline*}
\Pi\left(-\frac{m}{1-m}\bigmid m\right)
=\Pi\left(-\frac{1-\epsilon}\epsilon\bigmid 1-\epsilon\right)
=\int_0^{\frac{\pi}2}\frac{\d t}
{\left(1+\frac{1-\epsilon}\epsilon\sin^2 t\right)
\sqrt{1-(1-\epsilon)\sin^2 t}}=\\
\int_0^{\frac{\pi}2}\frac{1}{\cos^3 t}
\frac{\epsilon\,\d t}{(\epsilon+\tan^2 t)\sqrt{1+\epsilon\tan^2 t}}
=\frac{\sqrt\epsilon}2\int_0^\infty\frac{\sqrt{1+\epsilon s}\,\d s}
{(1+s)\sqrt{s(1+\epsilon^2 s)}},
\end{multline*}
where \(\epsilon:=1-m\) and \(s:=\frac{1}\epsilon\tan^2 t\). Hence,
\[
\lim_{m\to 1}\Omega(m)=\lim_{\epsilon\to 0}
\frac{1}2\int_0^\infty\frac{\sqrt{1+\epsilon s}\,\d s}
{(1+s)\sqrt{s(1+\epsilon^2 s)}}=
\frac{1}2\int_0^\infty\frac{\d s}{(1+s)\sqrt s}=
\int_0^\infty\frac{\d x}{1+x^2}=\frac{\pi}2.
\]
Now we have
\[
\Omega'(m)=\frac{2E(m)-(2-m)K(m)}{2m\sqrt{(2-m)(1-m)}}.
\]
Since
\[
\frac{\d}{\d m}[2E(m)-(2-m)K(m)]=\frac{(1-m)K(m)-E(m)}{2(1-m)}<0
\]
by~\eqref{eq:taus-EKineq}, we obtain that \(\Omega(m)\) monotonously decreases
from \(\frac{\sqrt 2}2\pi\) to \(\frac{\pi}2\) implying that
\(\frac{p}q\in\bigl(\frac{1}2,\frac{\sqrt 2}2\bigr)\) and that the value of
\(m\) is uniquely determined by \(\Omega(m)=\frac{p\pi}q\). The value of
\(b\) is then uniquely determined by the first equation in~\eqref{eq:taus-nm}.
We have \(u_{a,b}^{p,q,r}=u_{-\tilde r,k\tilde b}^{k\tilde p,k\tilde q,\tilde
r}\), where \(k:=\mathrm{gcd}(p,q),\tilde p:=\frac{p}k,\tilde
q:=\frac{q}k,\tilde b:=\frac{b}k,\tilde r:=r\).
Since the map \(u_{a,b}^{p,q,r}\) is \(\mathbb S^1\)-equivariant
w.r.t. the actions~(\ref{eq:intro-actS3},\ref{eq:intro-actT2}), its image is an
Otsuki torus by definition. It remains to show that it is exactly the torus
\(O_{\tilde p/\tilde q}\), i.e. it cannot be any other Otsuki torus. Recall that
according to the Hsiang-Lawson construction, Otsuki tori come from closed
geodesics in the orbit space \(\mathbb S_{\ge 0}^2\) equipped with a certain
metric (see~\cite{HL1971,Pen2013b} for the details). On the other hand, if
\(\tilde\phi(y),\tilde\theta(y)\) are the
functions~\eqref{eq:intro-phigen},\eqref{eq:intro-theta} parametrizing the map
\(u_{0,\tilde b}^{\tilde p,\tilde q,0}\), then \(\tilde\phi,\tilde\theta\) also
parametrize the corresponding curve in the orbit space \(\mathbb S_{\ge 0}^2\)
as in sec.~\ref{sec:harm}. If \(\tilde\phi\) is considered as a function of
\(\tilde\theta\), then it is a periodic function of period \(\frac{2\pi\tilde
p}{\tilde q}\). This way the numbers \(\tilde p,\tilde q\) can be inferred from
the curve, and this shows that the image of the map \(u_{0,\tilde b}^{\tilde
p,\tilde q,0}\) is indeed \(O_{\tilde p/\tilde q}\).
\end{proof}

\section{The dependence of \texorpdfstring{\(\bar\lambda_1(\mathbb
T^2,g_{a,b}^{1,1,0})\)}{λ̄₁} on \texorpdfstring{\(a,b\)}{a, b}}
\label{sec:ab}

Although \(\bar\lambda_1(\mathbb T^2,g_{a,b}^{1,1,0})\) is given explicitly
by~\eqref{eq:intro-barlval}, it is not easy to figure out what happens with this
expression when one of the parameters \(a,b\) is fixed and the other one varies.
Indeed, the dependence of \(\tau_1,\tau_2,\tau_3\) on \(a,b\) is very
complicated and implicit. However, it turns out that the derivatives of
\(\bar\lambda_1(\mathbb T^2,g_{a,b}^{1,1,0})\) in \(a\) and \(b\) can be
expressed in terms of \(\tau_1,\tau_2,\tau_3\).

Let \(a,b\in\mathbb R,b>0\) and \(N\in\mathbb N\). The
\textbf{Hopf differential} of a harmonic map \(u\colon (\mathbb
T^2,[g_{a,b}])\to\mathbb S^N\) is given by
\[
\mathcal H:=\langle\partial_z u,\partial_z u\rangle\d z^2,
\]
where \(z:=x+yi\) is a local conformal coordinate on \((\mathbb
T^2,[g_{a,b}])\). The Hopf differential is a globally defined holomorphic
quadratic differential on \((\mathbb T^2,[g_{a,b}])\). In particular, we have
\(\langle\partial_z u,\partial_z u\rangle\equiv H_{re}+H_{im}i\) for some
constants \(H_{re},H_{im}\in\mathbb R\). Theorem~\ref{th:intro-ab} follows
almost immediately from the following proposition, which is of independent
interest.

\begin{proposition}[M.\,Karpukhin, private communication]\label{pr:ab-hopf}
Let \(N\in\mathbb N\) and \(u_{a,b}\colon (\mathbb T^2,[g_{a,b}])\to\mathbb
S^N\) be a family of harmonic maps depending smoothly on \((a,b)\). Then
\[
\d_{a,b}E_{g_{a,b}}(u_{a,b})=2(H_{im}\d a+H_{re}\d b).
\]
\end{proposition}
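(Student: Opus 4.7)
The plan is to reduce the statement to a direct computation by transferring everything to a fixed smooth torus. Let $\mathbb T^2_{\mathrm{std}} = \mathbb R^2/\mathbb Z^2$ and define the diffeomorphism $\phi_{a,b}\colon\mathbb T^2_{\mathrm{std}}\to\mathbb T^2_{a,b}$ by $\phi_{a,b}(s,t)=(s+at,bt)$. Set $\tilde u_{a,b}:=u_{a,b}\circ\phi_{a,b}$ and $\tilde g_{a,b}:=\phi_{a,b}^*g_{a,b} = ds^2+2a\,ds\,dt+(a^2+b^2)\,dt^2$. Since energy is a diffeomorphism invariant, $E_{g_{a,b}}(u_{a,b})=E_{\tilde g_{a,b}}(\tilde u_{a,b})$, and now everything lives on a fixed domain.

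Next I would exploit harmonicity. Because $u_{a,b}$ is harmonic for $g_{a,b}$, the map $\tilde u_{a,b}$ is harmonic for $\tilde g_{a,b}$, so the derivative of $E_{\tilde g_{a_0,b_0}}(\tilde u_{a,b})$ at $(a_0,b_0)$ vanishes by the first variation formula applied to the smooth variation $\tilde u_{a,b}$ of $\tilde u_{a_0,b_0}$. Consequently,
\[
d_{a,b}E_{g_{a,b}}(u_{a,b})
=\left.\frac{\partial}{\partial (a,b)}E_{\tilde g_{a,b}}(\tilde u_{a_0,b_0})\right|_{(a_0,b_0)},
\]
i.e.\ one only needs to differentiate through the metric.

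Writing this out using $\det\tilde g_{a,b}=b^2$ and the inverse metric, the energy equals
\[
E_{\tilde g_{a,b}}(\tilde u)=\frac{1}{2b}\int_{[0,1]^2}\!\bigl[(a^2+b^2)|\partial_s\tilde u|^2-2a\langle\partial_s\tilde u,\partial_t\tilde u\rangle+|\partial_t\tilde u|^2\bigr]\,ds\,dt.
\]
Direct differentiation at fixed $\tilde u$ yields
\[
\frac{\partial E}{\partial a}=\frac{1}{b}\int\bigl[a|\partial_s\tilde u|^2-\langle\partial_s\tilde u,\partial_t\tilde u\rangle\bigr]\,ds\,dt,\quad
\frac{\partial E}{\partial b}=\frac{1}{2b^2}\int\bigl[(b^2-a^2)|\partial_s\tilde u|^2+2a\langle\partial_s\tilde u,\partial_t\tilde u\rangle-|\partial_t\tilde u|^2\bigr]\,ds\,dt.
\]
Substituting the pullback relations $\partial_s\tilde u=\partial_x u$ and $\partial_t\tilde u=a\partial_x u+b\partial_y u$, the $a$-integrand collapses to $-b\langle\partial_x u,\partial_y u\rangle$ and the $b$-integrand to $b^2(|\partial_x u|^2-|\partial_y u|^2)$.

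The final step is to identify these expressions with $2H_{im}$ and $2H_{re}$. From $\partial_z u=\tfrac12(\partial_x u-i\partial_y u)$ one gets $\langle\partial_z u,\partial_z u\rangle=\tfrac14(|\partial_x u|^2-|\partial_y u|^2)-\tfrac{i}{2}\langle\partial_x u,\partial_y u\rangle$, so by holomorphicity of $\mathcal H$ on the torus these real and imaginary parts are the constants $H_{re}$ and $H_{im}$ pointwise. Since $\int_{[0,1]^2}\!ds\,dt=1$, I obtain $\partial_a E=2H_{im}$ and $\partial_b E=2H_{re}$, as required. The only conceptual subtlety, and what I would watch most carefully, is the justification for dropping the map-variation term in step two: this rests on interpreting the family $\{\tilde u_{a,b}\}$ as a variation of $\tilde u_{a_0,b_0}$ on the fixed domain, which is exactly why the pullback by $\phi_{a,b}$ is introduced; the rest of the argument is a mechanical calculation.
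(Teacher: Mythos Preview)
Your proof is correct and follows essentially the same route as the paper: pull back to a fixed torus via the linear map \((s,t)\mapsto(s+at,bt)\), differentiate the explicit energy formula, and match the resulting integrands with \(H_{re},H_{im}\). The only cosmetic differences are that the paper eliminates the map-variation term by writing out the integration-by-parts and \(|u|^2\equiv 1\) explicitly (your abstract appeal to the first variation formula is the same computation), and that the paper compares the two sides by expressing \(H_{re},H_{im}\) in the pulled-back \((s,t)\)-derivatives rather than, as you do, simplifying the \((s,t)\)-integrands back to \((x,y)\)-derivatives and invoking constancy of the Hopf differential.
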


\begin{proof}
In this proof we identify functions and metrics on \((\mathbb T^2,[g_{a,b}])\)
with \(\Gamma_{a,b}\)-periodic functions and metrics on \(\mathbb R^2\). Since
\((a,b)\) varies, it is convenient to reparametrize \(u_{a,b}\) on the unit
square \(\square:=[0,1]^2\). Put
\[
F_{a,b}:=\begin{pmatrix} 1 & a\\ 0 & b\end{pmatrix},\quad
\hat g_{a,b}:=\frac{1}b F_{a,b}^* g_{a,b}=
\frac{1}b\bigl(\d x^2+2 a\,\d x\d y+(a^2+b^2)\d y^2\bigr),\quad
\hat u_{a,b}=u_{a,b}\circ F_{a,b}.
\]
The inverse metric is given by
\[
\hat g_{a,b}^{-1}=\frac{1}b\bigl((a^2+b^2)\d x^2-2a\,\d x\d y+\d y^2\bigr).
\]
We have
\[
E_{g_{a,b}}(u_{a,b})=E_{\hat g_{a,b}}(\hat u_{a,b})=
\frac{1}{2b}\int_\square\bigl((a^2+b^2)|\partial_x\hat u_{a,b}|^2-
2a\langle\partial_x\hat u_{a,b},\partial_y\hat u_{a,b}\rangle+
|\partial_y\hat u_{a,b}|^2\bigr)\d x\d y.
\]
Hence,
\begin{multline*}
\d_{a,b}E_{g_{a,b}}(u_{a,b})=
\left(\frac{1}b\int_\square (a|\partial_x\hat u_{a,b}|^2-
\langle\partial_x\hat u_{a,b},\partial_y\hat u_{a,b}\rangle)\d x\d y\right)
\d a+\\
\left(\frac{1}{2b^2}\int_\square\bigl((b^2-a^2)|\partial_x\hat u_{a,b}|^2+2a
\langle\partial_x\hat u_{a,b},\partial_y\hat u_{a,b}\rangle-
|\partial_y\hat u_{a,b}|^2\bigr)\d x\d y\right)\d b+
\int_\square g_{a,b}^{-1}(\d\hat u_{a,b},\d(\d_{a,b}\hat u_{a,b}))\d x\d y.
\end{multline*}
For the last term we have
\begin{multline*}
\int_\square g_{a,b}^{-1}(\d\hat u_{a,b},\d(\d_{a,b}\hat u_{a,b}))\d x\d y=
\int_\square\langle\Delta_{g_{a,b}}\hat u_{a,b},\d_{a,b}\hat u_{a,b}\rangle\d x\d y=
\int_\square|\d\hat u_{a,b}|^2\langle\hat u_{a,b},\d_{a,b}\hat u_{a,b}\rangle\d x\d y=\\
\frac{1}2\int_\square|\d\hat u_{a,b}|^2\d_{a,b}\bigl(|\hat u_{a,b}|^2\bigr)\d x\d y=0,
\end{multline*}
since \(|\hat u_{a,b}|^2\equiv 1\).
Hence,
\begin{multline*}
\d_{a,b}E_{g_{a,b}}(u_{a,b})=
\left(\frac{1}b\int_\square (a|\partial_x\hat u_{a,b}|^2-
\langle\partial_x\hat u_{a,b},\partial_y\hat u_{a,b}\rangle)\d x\d y\right)
\d a+\\
\left(\frac{1}{2b^2}\int_\square\bigl((b^2-a^2)|\partial_x\hat u_{a,b}|^2+2a
\langle\partial_x\hat u_{a,b},\partial_y\hat u_{a,b}\rangle-
|\partial_y\hat u_{a,b}|^2\bigr)\d x\d y\right)\d b.
\end{multline*}
On the other hand, since \(u_{a,b}(x,y)=\hat u_{a,b}(x-\frac{a}b y,\frac{y}b)\),
we have
\begin{align*}
4H_{re}&=|\partial_x u_{a,b}|^2-|\partial_y u_{a,b}|^2=
|\partial_x\hat u_{a,b}|^2-\left|-\frac{a}b\partial_x\hat u_{a,b}+\frac{1}b
\partial_y\hat u_{a,b}\right|^2\\&=
\frac{1}{b^2}\bigl((b^2-a^2)|\partial_x\hat u_{a,b}|^2+2a
\langle\partial_x\hat u_{a,b},\partial_y\hat u_{a,b}\rangle-
|\partial_y\hat u_{a,b}|^2\bigr),\\
4H_{im}&=-2\langle\partial_x u_{a,b},\partial_y u_{a,b}\rangle=
-2\langle\partial_x\hat u_{a,b},-\frac{a}b\partial_x\hat u_{a,b}+\frac{1}b
\partial_y\hat u_{a,b}\rangle=
\frac{2}b(a|\partial_x\hat u_{a,b}|^2-\langle\partial_x\hat u_{a,b},
\partial_y\hat u_{a,b}\rangle).
\end{align*}
Comparing the last two equations, we get the result.
\end{proof}

\begin{proof}[Proof of Theorem~\ref{th:intro-ab}]
By~\eqref{eq:otsuki-hopf}, the components of the Hopf differential of the map
\(u_{a,b}^{1,1,0}\) are
\[
H_{re}=\pi^2-\frac{A}4,\qquad H_{im}=-\pi d.
\]
By~\eqref{eq:harm-alper} and~\eqref{eq:harm-thaleq}, for \(p=q=1,r=0\) and
\(a\in\bigl(0,\frac{1}2\bigr)\) we have \(d<0\). It follows that \(H_{im}>0\)
and \(\bar\lambda_1(\mathbb T^2,u_{a,b}^{1,1,0})=2E(u_{a,b}^{1,1,0})\) is
increasing in \(a\) by Proposition~\ref{pr:ab-hopf}. By the same proposition,
that \(\bar\lambda_1(\mathbb T^2,u_{a,b}^{1,1,0})=2E(u_{a,b}^{1,1,0})\) is
decreasing in \(b\) will follow from \(H_{re}<0\) or, equivalently,
\(A>4\pi^2\). Using~\eqref{eq:weyl-A}, it is easy to see that the latter
inequality is equivalent to~\eqref{eq:barlval-Xiaux} proved in
sec.~\ref{sec:barlval}. This observation concludes the proof.
\end{proof}

\section{The third limit case and a nonintegrable Jacobi field}\label{sec:lim3}

In the limit cases of Theorem~\ref{th:intro-main} at least one of the first two
inequalities in~\eqref{eq:intro-pqrineq} turn into equalities. Therefore it is
natural to consider also the \textbf{third limit case} when the third inequality
in~\eqref{eq:intro-pqrineq} becomes equality. More precisely, suppose that
\(p,q,r\) are fixed and \((a',b')\) approaches some point \((a,b)\) such that
\((r+a)^2+b^2=p^2\). Then, since \(\Psi(m)=\frac{\pi^2 b^2}{q^2}\), we have
\(m\to 0\) and by~\eqref{eq:taus-limzero},
\[
-\frac{\tau_1}{\tau_3-\tau_1}=\frac{m}{n_0}\to 1-\frac{4p^2}{q^2},\qquad
\frac{1-\tau_1}{\tau_3-\tau_1}=\frac{m}{n_1}\to 1-\frac{4(r+a)^2}{q^2}.
\]
A short computation now shows that
\[
\tau_1,\tau_2\to\frac{4p^2-q^2}{4b^2},\quad
\tau_3\to\frac{p^2}{b^2},\qquad
\theta(y)\to\frac{2\pi p}b y,\quad
\alpha(y)\to -\frac{2\pi(r+a)}b,
\]
and finally
\begin{equation}\label{eq:lim3-lim}
u_{a',b'}^{p,q,r}\to u_{a,b}^{p,r;\phi_0},\quad\text{where}\quad
\phi_0=\arccos\frac{\sqrt{4p^2-q^2}}{2b}.
\end{equation}
One may wonder what is special about the value of \(\phi_0\) here. It turns out
that for this \(\phi_0\) the map \(u_{a,b}^{p,r;\phi_0}\) has an unexpected
property: \emph{it admits a nonintegrable Jacobi field}. Recall that if
\(u\colon (M,g)\to (N,h)\) is a harmonic map and \((u_t)_{t\in
(-\epsilon,\epsilon)}\) is a smooth variation of \(u=u_0\), then the second
variation of energy at \(t=0\) is given by
\[
\left.\frac{\d^2}{\d t^2}\right|_{t=0}E(u_t)=\langle J_{u,g}V,V\rangle,
\]
where \(V=\left.\frac{\partial u_t}{\partial t}\right|_{t=0}\in \Gamma(u^*TN)\)
and
\[
J_{u,g}V=\Delta^{u^*TN}V-\tr_g R^N(V,\d u)\d u
\]
is the \textbf{Jacobi stability operator}. The fields from \(\ker J_{u,g}\) are
called \textbf{Jacobi fields} along \(u\). Clearly, if the map \(u_t\) is
harmonic for all \(t\in (-\epsilon,\epsilon)\), then \(V\) is a Jacobi field.
Jacobi fields obtained in this way are called \textbf{integrable}.

\begin{proposition}
Let \(a,b\in\mathbb R,b>0\). Let \(p>0,r\) be integers
satisfying~\eqref{eq:intro-preq} and
\(\phi_0:=\arccos\frac{\sqrt{4p^2-q^2}}{2b}\). Then the map
\(u_{a,b}^{p,r;\phi_0}\) admits a nonintegrable Jacobi field.
\end{proposition}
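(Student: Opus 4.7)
The plan is to construct the Jacobi field from the degeneration $u_{a,b'}^{p,q,r} \to u_{a,b}^{p,r;\phi_0}$ of equation~(\ref{eq:lim3-lim}) as $(a, b')$ approaches $(a, b)$ across the boundary curve $(r+a)^2 + b^2 = p^2$ from the region $(r+a)^2 + (b')^2 > p^2$, and then to obstruct its integrability via Theorem~\ref{th:intro-harm} combined with a Kuranishi-type argument. A preliminary direct calculation using $(r+a)^2 + b^2 = p^2$ shows that $|\d u|^2 \equiv 4\pi^2 p^2/b^2$ is constant for $u := u_{a,b}^{p,r;\phi_0}$ with the specified $\phi_0$. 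Consequently, the Jacobi equation for $V \in \Gamma(u^*T\mathbb{S}^3)$ reduces to $\Delta V + (4\pi^2 p^2/b^2) V = \mu \cdot u$ for a scalar function $\mu$, and Fourier analysis on $(\mathbb{T}^2, g_{a,b})$ further reduces the description of $\ker J$ to finite-dimensional linear algebra on the Laplace eigenspace of eigenvalue $4\pi^2 p^2/b^2$.

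Next, fix $\varepsilon > 0$ and set $v_\varepsilon := u_{a, b+\varepsilon}^{p,q,r} \circ F_\varepsilon$, where $F_\varepsilon\colon \mathbb{R}^2/\Gamma_{a,b} \to \mathbb{R}^2/\Gamma_{a, b+\varepsilon}$ is the linear diffeomorphism used in the proof of Proposition~\ref{pr:ab-hopf}. Using the asymptotics~(\ref{eq:taus-limzero}) of $\tau_1, \tau_2, \tau_3, n_0, n_1, m$ as $m \to 0^+$ together with formulas~(\ref{eq:intro-phigen}),~(\ref{eq:intro-theta}),~(\ref{eq:intro-alpha}), I would expand $v_\varepsilon$ to first order in $\varepsilon$. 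Because $v_\varepsilon$ is harmonic with respect to $F_\varepsilon^* g_{a,b+\varepsilon}$, whose conformal class drifts off $[g_{a,b}]$ along a Teichm\"uller direction $h$, the naive first-order term $\partial_\varepsilon v_\varepsilon|_{\varepsilon = 0}$ solves $J_{u, g_{a,b}}\bigl(\partial_\varepsilon v_\varepsilon|_{\varepsilon = 0}\bigr) = -(D_g\tau)(h)(u)$ rather than the Jacobi equation itself. Using the explicit forms of $h$ and of the Hopf differential of $u$ (computable along the lines of Proposition~\ref{pr:ab-hopf}), I would then compensate for the right-hand side by subtracting an appropriate multiple of the integrable field $\partial_{\phi_0} u_{a,b}^{p,r;\phi_0}$, producing a genuine Jacobi field $\widetilde V$ that is linearly independent from $\partial_{\phi_0} u_{a,b}^{p,r;\phi_0}$ by virtue of the nontrivial $q$-dependent content inherited from the family $u_{a,b+\varepsilon}^{p,q,r}$.

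To conclude that $\widetilde V$ is not integrable, I would first observe that $\widetilde V$ is $\mathbb{S}^1$-equivariant with respect to the actions~(\ref{eq:intro-actS3}),~(\ref{eq:intro-actT2}). By Theorem~\ref{th:intro-harm}, the only nearby $\mathbb{S}^1$-equivariant harmonic maps from $(\mathbb{T}^2, [g_{a,b}])$ to $\mathbb{S}^3$ lie in the family $\{u_{a,b}^{p,r;\phi_0'}\}_{\phi_0'}$, whose tangent at $\phi_0$ is $\partial_{\phi_0}u_{a,b}^{p,r;\phi_0}$ modulo trivial Jacobi fields from isometries of $\mathbb{S}^3$ and translations of the torus; hence $\widetilde V$ cannot arise from an equivariant integrating family. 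To exclude a non-equivariant integrating family $\widetilde u_t$, I would carry out a Kuranishi-type obstruction computation: the second-order extension $\widetilde V_2 := \partial_t^2 \widetilde u_t|_{t=0}$ must satisfy $J \widetilde V_2 = Q(\widetilde V, \widetilde V)$ for the explicit quadratic form $Q$ coming from the $\mathbb{S}^3$-valued harmonic map equation, and I would show that $\langle Q(\widetilde V, \widetilde V), W\rangle_{L^2(\mathbb{T}^2, g_{a,b})} \ne 0$ for a suitable $W \in \ker J$ (for instance $W = \partial_{\phi_0} u$), yielding the contradiction. The main obstacle is carrying out this Kuranishi computation explicitly; a secondary difficulty is the bookkeeping in the construction of $\widetilde V$, where the Teichm\"uller drift and the $\phi_0$-correction must cancel precisely.
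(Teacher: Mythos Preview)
Your route is quite different from the paper's and, as written, has genuine gaps.

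\textbf{Construction of the Jacobi field.} Your correction step cannot work: $\partial_{\phi_0}u$ lies in $\ker J$ (since $u_{a,b}^{p,r;\phi_0'}$ is harmonic for every $\phi_0'$), so subtracting any multiple of it leaves $J(\cdot)$ unchanged and does not kill the drift term. In fact the equation $J(\partial_\varepsilon v_\varepsilon|_{\varepsilon=0})=-(D_g\tau)(h)(u)$ you write down is inconsistent: a direct computation shows the right-hand side is a nonzero multiple of $\nu_3=\partial_{\phi_0}u\in\ker J$, hence orthogonal to $\im J$. What this really signals is that $v_\varepsilon$ is not differentiable in $\varepsilon$ at $\varepsilon=0$; tracing the asymptotics one finds $m\sim C\sqrt{\varepsilon}$, so the correct first variation is $\dot v:=\lim_{\varepsilon\to0^+}(v_\varepsilon-u)/\sqrt{\varepsilon}$, and since the metric only varies at order $\varepsilon$ this $\dot v$ is already a genuine Jacobi field with no correction needed. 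Carrying this out would recover (a combination of) the paper's explicit fields $V_1,V_2$, but the paper gets there without any of this: it writes $J_{u,g_{a,b}}$ in the moving orthonormal frame $\nu_1,\nu_2,\nu_3$, where it has constant coefficients, and reads off the kernel on each Fourier mode $(k,l)$ by computing $\det J_{u,g_{a,b}}^{k,l}$.

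\textbf{Non-integrability.} You reduce this to a second-order Kuranishi obstruction that you do not compute and explicitly flag as the main difficulty; nothing in the proposal guarantees the relevant pairing $\langle Q(\widetilde V,\widetilde V),W\rangle_{L^2}$ is nonzero, and appealing to Theorem~\ref{th:intro-harm} only rules out equivariant integrating families. The paper's argument is much shorter and avoids obstruction theory entirely: it observes that any harmonic deformation $u_t$ of $u$ must still be built from Laplace eigenfunctions with the same eigenvalue $\lambda=4\pi^2p^2/b^2$, and then matching the Fourier content of $\langle V_1,\nu_1\rangle$ and $\langle V_1,\nu_2\rangle$ against the admissible lattice points in~\eqref{eq:lim3-kl} forces $q=2p$ and $q=2|r+a|$ simultaneously, contradicting~\eqref{eq:intro-preq}.
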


\begin{proof}
Put \(u:=u_{a,b}^{p,r;\phi_0},A:=\cos\phi_0,B:=\sin\phi_0\) for simplicity. Let
\[
\nu_1:=(ie^{\frac{2\pi i}b py},0),\qquad
\nu_2:=(0,ie^{-\frac{2\pi i}b(bx-(r+a)y)}),\qquad
\nu_3:=(-Be^{\frac{2\pi i}b py},Ae^{-\frac{2\pi i}b(bx-(r+a)y)}).
\]
Then \(\nu_1,\nu_2,\nu_3\) form a global basis in \(\Gamma(u^*T\mathbb S^3)\)
orthonormal at each point \((x,y)\in\mathbb (\mathbb T^2,[g_{a,b}])\). A lengthy
but straightforward computation shows that in this basis the operator
\(J_{u,g_{a,b}}\) has the form
\[
J_{u,g_{a,b}}V=\Delta_0 V+4\pi A\begin{pmatrix}
0 & 0 & 0\\
0 & 0 & -1\\
0 & 1 & 0
\end{pmatrix}\partial_x V+\frac{4\pi}b\begin{pmatrix}
0 & 0 & Bp\\
0 & 0 & A(r+a)\\
-Bp & -A(r+a) & 0
\end{pmatrix}\partial_y V,
\]
where \(\Delta_0=-(\partial_x^2+\partial_y^2)\mathbf 1\) is the flat Laplacian
applied to the components of \(V\in\Gamma(u^*T\mathbb S^3)\). Since
\(J_{u,g_{a,b}}\) has constant coefficients, we can split the variables. If
\[
V=\sum_{k,l\in\mathbb Z}V_{k,l}e^{\frac{2\pi i}b (lbx+(k-la)y)},\qquad
V_{k,l}\in\mathbb C^3
\]
is the Fourier expansion of \(V\), then
\[
J_{u,g_{a,b}}^{\mathbb C}V=\frac{4\pi^2}{b^2}
\sum_{k,l\in\mathbb Z}(J_{u,g_{a,b}}^{k,l}V_{k,l})
e^{\frac{2\pi i}b (lbx+(k-la)y)},
\]
where
\[
J_{u,g_{a,b}}^{k,l}=(l^2 b^2+(k-la)^2)\mathbf 1+
2ilb^2 A\begin{pmatrix}
0 & 0 & 0\\
0 & 0 & -1\\
0 & 1 & 0
\end{pmatrix}+2i(k-la)\begin{pmatrix}
0 & 0 & Bp\\
0 & 0 & A(r+a)\\
-Bp & -A(r+a) & 0
\end{pmatrix},\quad k,l\in\mathbb Z.
\]
We have
\[
\det J_{u,g_{a,b}}^{k,l}=(l^2 b^2+(k-la)^2)
((l^2 b^2+(k-la)^2)^2-4A^2(lb^2-(k-la)(r+a))^2-4B^2 p^2(k-la)^2).
\]
Substituting \(k=\pm q,l=0\) and using~\eqref{eq:intro-preq},
\(A^2=\frac{4p^2-q^2}{4b^2},A^2+B^2=1\), we obtain \(\det J_{u,g_{a,b}}^{\pm
q,0}=0\). The corresponding eigenspace of \(J_{u,g_{a,b}}\) is spanned by
\(V_1,V_2\), where
\begin{align*}
V_1&=2Bp\cos\frac{2\pi q}by\,\nu_1+2A(r+a)\cos\frac{2\pi q}by\,\nu_2-
q\sin\frac{2\pi q}by\,\nu_3,\\
V_2&=2Bp\sin\frac{2\pi q}by\,\nu_1+2A(r+a)\sin\frac{2\pi q}by\,\nu_2+
q\cos\frac{2\pi q}by\,\nu_3.
\end{align*}
It remains to show that, for example, \(V_1\) is not integrable. Assume the
converse and let \((u_t)_{t\in (-\epsilon,\epsilon)}\) be the corresponding
harmonic variation of \(u=u_0\). Put \(\lambda:=\frac{2\pi}b
p=\frac{2\pi}b\sqrt{(r+a)^2+b^2}\). Since any harmonic map \((\mathbb
T^2,g_{a,b})\to\mathbb S^3\) is given by eigenfunctions of the Laplacian and
particularly the map \(u\) is given by \(\lambda\)-eigenfunctions, the map
\(u_t\) is also given by \(\lambda\)-eigenfunctions for all small \(t\). In
other words, we may assume that
\[
\tilde u_t(x,y)=
\left(\sum A_{k,l}(t)e^{\frac{2\pi i}b (lbx+(k-la)y)},
\sum B_{k,l}(t)e^{\frac{2\pi i}b (lbx+(k-la)y)}\right),\qquad
A_{k,l},B_{k,l}\colon (-\epsilon,\epsilon)\to\mathbb C,
\]
where, as usual, \(\tilde u_t\colon\mathbb R^2\to\mathbb S^3\) is the lift of
\(u_t\), and the sums are taken over all \(k,l\in\mathbb Z\) satisfying
\begin{equation}\label{eq:lim3-kl}
\frac{4\pi^2}{b^2}(l^2 b^2+(k-la)^2)=\lambda^2.
\end{equation}
We have
\[
2Bp\cos\frac{2\pi q}by=\langle V_1,\nu_1\rangle=
\left\langle\left.\frac{\partial u_t}{\partial t}\right|_{t=0},\nu_1\right\rangle=
\sum\Re(-iA_{k,l}'(0)e^{\frac{2\pi i}b (lbx+(k-la-p)y)}).
\]
By uniqueness of the (real) Fourier expansion, the only nonzero summands in the
RHS are those with \(l=0,k=p\pm q\), and then~\eqref{eq:lim3-kl} implies that
\(q=2p\). A similar argument with \(\nu_2\) in place of \(\nu_1\) shows that
\(q=2|r+a|\). However, we cannot have both equalities at the same time because
of~\eqref{eq:intro-preq}. This contradiction concludes the proof.
\end{proof}

\begin{remark}
The existence of nonintegrable Jacobi fields is quite a rare property. For
example, it is proved in~\cite{Muk1997} that the torus \(u_{0,1}^{1,0;\phi_0}\)
admits a nonintegrable Jacobi field if and only if
\(\phi_0\in\{0,\frac{\pi}6,\frac{\pi}3,\frac{\pi}2\}\).
\end{remark}

\section{Discussion}\label{sec:disc}

\subsection{Why Conjecture~\ref{con:intro-maxmetric} is difficult?}

Once one has a candidate for a maximal metric for \(\bar\lambda_1\) (either
globally or in the conformal class), a standard tool to prove that it is an
actual maximizer is the so-called \emph{Hersch trick}. It was introduced
in~\cite{Her1970} and then successfully applied in many similar situations. The
idea is the following. Consider the family of conformal automorphisms of
\(\mathbb S^3\) given by
\[
G_\gamma(z)=\frac{1-\gamma^2}{|z+\gamma|^2}(z+\gamma)+\gamma,\qquad
\gamma\in\mathbb B^4,\quad z\in\mathbb S^3,
\]
where \(\mathbb B^4=\{\gamma\in\mathbb R^4\colon |\gamma|\le 1\}\). Let
\((a,b)\in\mathcal M\) and \(u\colon (\mathbb T^2,[g_{a,b}])\to\mathbb S^3\) be
a smooth map satisfying
\begin{equation}\label{eq:disc-G}
E(G_\gamma\circ u)\le E(u),\qquad\gamma\in\mathbb B^4.
\end{equation}
Then it follows from the variational characterization of \(\lambda_1\) and a
standard topological argument that
\[
\Lambda_1(M,[g_{a,b}])\le E(u).
\]
In particular, if \(u\) is harmonic, then
\[
\Lambda_1(M,[g_{a,b}])\le\bar\lambda_1(M,\frac{1}2 |\d u|_{g_{a,b}}^2 g_{a,b}).
\]
Therefore, a natural attempt to prove Conjecture~\ref{con:intro-maxmetric} would
be to put \(u:=u_{a,b}^{1,1,0}\) in~\eqref{eq:disc-G}. Unfortunately, it turns
out that in this case~\eqref{eq:disc-G} fails at least for some conformal
classes \((a,b)\in\mathcal M\) (probably, it fails for all of them). Indeed, let
\((a,b)\to (a_0,b_0)\), where \(a_0^2+b_0^2=1\). Then by~\eqref{eq:lim3-lim} we
have
\[
u_{a,b}^{1,1,0}\to u_{a_0,b_0}^{1,0;\phi_0},\quad\text{where}\quad
\phi_0=\arccos\frac{\sqrt 3}{2b_0}.
\]
Let \(\gamma_0:=(1,0,0,0)\). Using~\cite[(1.7)]{MR1986}, we obtain for
\(u:=u_{a_0,b_0}^{1,0;\phi_0}\)
\begin{multline*}
\left.\frac{\d^2}{\d t^2}\right|_{t=0}E(G_{t\gamma_0}\circ u)=
\int_{\mathbb T^2} (3\langle u,\gamma_0\rangle^2-1)
|\d u|_{g_{a,b}}^2\d v_{g_{a,b}}=\\
\frac{4\pi^2}{b_0^2}\int_0^{b_0}
\left(\frac{9}{4b_0^2}\cos^2\frac{2\pi}{b_0} y-1\right)\d y=
\frac{4\pi^2}{b_0^3}\left(\frac{9}8-b_0^2\right)>0.
\end{multline*}
This means that~\eqref{eq:disc-G} fails for
\(u=u_{a,b}^{1,1,0},\gamma=t\gamma_0\), \(t\) sufficiently small, and \((a,b)\)
sufficiently close to \((a_0,b_0)\). Note that in~\cite{ElSIR1996}, the authors
use~\eqref{eq:disc-G} with \(u=u_{a_0,b_0}^{1,0,\psi_0}\), where
\(\psi_0=\arccos\frac{\sqrt 5}{2\sqrt 2}\ne\phi_0\), to prove that the flat
metric is maximal in any rhombic conformal class \((a_0,b_0)\in\mathcal
M,a_0^2+b_0^2=1\).

One may hope to generalize Hersch trick by constructing a four dimensional
family of \emph{energy decreasing variations} of \(u_{a,b}^{1,1,0}\), i.e. such
that the energy of each map of this family does not exceed the energy of
\(u_{a,b}^{1,1,0}\) (cf.~\cite{Kar2024}). However, we think that it would be a
challenging task. Recall that for a harmonic map \(u\colon (\mathbb
T^2,g)\to\mathbb S^3\) its \textbf{energy index} \(\Ind_E u\) (respectively,
\textbf{energy nullity} \(\Nul_E u\)) is the number of negative (respectively,
zero) eigenvalues of the corresponding Jacobi stability operator \(J_{u,g}\).
Clearly, any family of variations as above induces a four dimensional subspace
\(U\subset\Gamma(u^*T\mathbb S^3)\) such that \(J_{u,g}|_U\le 0\). However,
according to our numerical experiments in Wolfram Mathematica, it seems that
\(\Ind_E(u_{a,b}^{1,1,0})=3\) and \(\Nul_E(u_{a,b}^{1,1,0})=7\) (in fact, we
have \(\Ind_E(u_{a,b}^{1,1,0})\le 4N(2)=4\) by~\cite[Proposition~1.6]{Kar2021}
anyway). Six Jacobi fields come from the rotations of \(\mathbb S^3\) (these
fields are not really useful) and the remaining field comes from the conformal
automorphisms of \((\mathbb T^2,[g_{a,b}])\) given by \(y\)-shifts. Therefore,
we have very few fields contributing to the energy index, and it seems that they
do not have any clear geometrical meaning. This makes difficult even the
construction of the space \(U\), let alone a four dimensional family of energy
decreasing variations.

Finally, note that the analogs of~\eqref{eq:disc-G} as well as the absence of
nontrivial Jacobi fields both play an essential role in establishing the
quantitative stability of maximizers in a conformal class, see~\cite{KNPS2025}.
In particular, the above discussion shows that if
Conjecture~\ref{con:intro-maxmetric} is true, then the methods
of~\cite{KNPS2025} cannot be applied to prove the quantitative stability of the
corresponding maximizer (however, it is possible that this stability can be
treated by other methods).

\subsection{Directions for the future research}

It would be interesting to establish an analogue of Theorem~\ref{th:intro-harm}
for Klein bottles. Another possibility is to replace the
action~\eqref{eq:intro-actS3} by
\[
\alpha\cdot (z_1,z_2)=(e^{2\pi ik\alpha}z_1,e^{2\pi il\alpha}z_2),\qquad
(z_1,z_2)\in\mathbb S^3,\qquad
\alpha\in\mathbb R/\mathbb Z\cong\mathbb S^1
\]
for some positive relatively prime integers \(k,l\). The corresponding \(\mathbb
S^1\)-equivariant \emph{minimal} tori and Klein bottles are called \emph{Lawson
\(\tau\)-surfaces}~\cite{Law1970,HL1971,Pen2012}. Note that, unlike Otsuki tori,
the Lawson \(\tau\)-surfaces are given by elementary functions. It is reasonable
to expect that the harmonic analogues of Lawson \(\tau\)-surfaces (if they
exist) are given by elliptic integrals. The case of higher-dimensional spheres
is of interest as well (see~\cite{Pen2015} for the case of \(\mathbb
S^1\)-equivariant minimal tori in \(\mathbb S^5\)). Finally, it would be
interesting to compare Theorem~\ref{th:intro-harm} with the classification of
harmonic \(\mathbb S^1\)-equivariant tori in terms of spectral
data~\cite{CO2020}.

\bibliographystyle{alpha}
\bibliography{mybib.bib}

\end{document}